\documentclass[12pt]{article}
 \usepackage[margin=1in]{geometry} 
\usepackage{amsmath,amsthm,amssymb,amsfonts}
 \usepackage{amssymb}

\numberwithin{equation}{section}

\newtheorem{definition}{Definition}[section]
\usepackage{enumitem}
\usepackage[utf8]{inputenc}
\usepackage[pagewise]{lineno}

\usepackage{hyperref}
 \usepackage{csquotes}
\usepackage[utf8]{inputenc}
\usepackage[english]{babel}
\usepackage{xcolor}

\providecommand{\keywords}[1]
{
  \small	
  \textbf{\textit{Keywords:}} #1
}

\newcommand{\MSC}[1]{%
  \small
  \textbf{\textit{Mathematics Subject Classification:}} #1
}
\title{  $L^\infty$ bounds and asymptotic behavior in a doubly degenerate chemotaxis system below six dimensions}
\author{
    Minh Le\thanks{ Institute for Theoretical Sciences, Westlake University, China \texttt{(leminh@westlake.edu.cn)}} 
}
\date{}

\begin{document}
\maketitle

\begin{abstract}
 We investigate a doubly degenerate nutrient-taxis system of the form
\begin{equation*}
\begin{cases}
    u_t = \nabla \cdot (u v \nabla u) - \chi \nabla \cdot (u^\alpha v \nabla v) + \ell  u v, \qquad &x \in \Omega, \ t > 0, \\
    v_t = \Delta v - u v, \qquad &x \in \Omega, \ t > 0,
\end{cases}
\end{equation*}
subject to the homogeneous Neumann boundary conditions in a smoothly bounded convex domain $\Omega \subset \mathbb{R}^n$ with $n\in \left \{ 3,4,5 \right \}$,  where $\alpha \geq 1$, $\chi>0$ and $\ell  \geq 0$. For any suitably regular initial data, we establish the global existence of a weak solution that remains uniformly bounded in time, provided that $\alpha$ lies in the range $\left[1, \frac{5}{2} - \frac{n}{4}\right)$, and we also determine the large-time behavior of these solutions. Our proof relies on several novel functional inequalities, a bootstrap argument, and a Moser iteration method.

\end{abstract}

\keywords{Degenerate diffusion; Chemotaxis; Global existence}\\
\MSC{35B35, 35K45, 35K55, 92C15, 92C17}

\numberwithin{equation}{section}
\newtheorem{theorem}{Theorem}[section]
\newtheorem{lemma}[theorem]{Lemma}
\newtheorem{remark}{Remark}[section]
\newtheorem{Prop}{Proposition}[section]
\newtheorem{Def}{Definition}[section]
\newtheorem{Corollary}{Corollary}[theorem]
\allowdisplaybreaks

\section{Introduction}

This paper is concerned with the following system of parabolic equations
\begin{equation} \label{1}
\begin{cases}
    u_t = \nabla \cdot (u v \nabla u) - \chi \nabla \cdot (u^\alpha v \nabla v) + \ell u v, \qquad & x \in \Omega, \ t > 0, \\[2mm]
    v_t = \Delta v - u v, \qquad & x \in \Omega, \ t > 0, \\[2mm]
    (u v \nabla u - \chi u^\alpha v \nabla v) \cdot \nu = \nabla v \cdot \nu = 0, \qquad & x \in \partial \Omega, \ t > 0, \\[2mm]
    u(x,0) = u_0(x), \quad v(x,0) = v_0(x), \qquad & x \in \Omega,
\end{cases}
\end{equation}
posed in a smoothly bounded convex domain $\Omega \subset \mathbb{R}^n$ with $n \in \{3,4,5\}$, where $\alpha \ge 1$, $\chi > 0$, and $\ell \ge 0$.

In the absence of chemotactic effects, i.e., when $\chi = 0$, system \eqref{1} reduces to the original model proposed in~\cite{JCR2013}, which was designed to describe the collective motion of Bacillus subtilis on nutrient-poor agar surfaces, as experimentally observed in~\cite{Fujikawa, Fujikawa1, MMT1992}. In this context, the unknown $u(x,t)$ denotes the bacterial population density, while $v(x,t)$ represents the nutrient concentration at position $x$ and time $t$.  When $\chi > 0$, an additional chemotactic flux is taken into consideration. More precisely, the term $-\chi \nabla \cdot (u^\alpha v \nabla v)$ models the directed movement of bacteria in response to nutrient density. This refined chemotaxis model was also proposed in~\cite{JCR2013}, as it is theoretically argued in \cite{JCR2013, Plaza} to provide a more accurate mathematical description of the underlying biological process. This assertion is further supported by numerical simulations reported in~\cite{JCR2013} for the particular case $\alpha = 2$, which demonstrate qualitative agreement with the experimental observations.

The mathematical understanding of system \eqref{1} remains rather limited; to date, only a sparse collection of results concerning global existence, boundedness, and asymptotic behavior has been established. A striking feature of \eqref{1} lies in its double degeneracy, namely a porous-medium-type degeneracy at $u = 0$ and a cross-degeneracy at $v = 0$, which substantially complicates the analysis by rendering standard parabolic regularity theories inapplicable and necessitating novel techniques to handle the simultaneous loss of ellipticity and the strong coupling between the two equations. Despite these inherent difficulties, several partial results have been obtained in low-dimensional settings. In the one-dimensional case ($n = 1$), it was established in~\cite{WinklerAMS, Li+Winkler} that the system with $\chi > 0$ and $\alpha = 2$ admits a global weak solution that remains uniformly bounded in time, and its asymptotic behavior was also characterized. For the two-dimensional counterpart, the existence of global weak solutions together with their $L^p$-boundedness was first obtained in~\cite{Li2022} under the condition $\alpha \in (1, 3/2)$; this was subsequently improved in~\cite{Winkler-2024}, where it was shown that, under the milder assumption $\alpha < 2$, solutions exist globally and remain bounded in $L^\infty$. In the critical case $\alpha = 2$, global existence and $L^p$-boundedness were demonstrated in~\cite{Winkler2022NA} under a smallness condition on the initial data, while for arbitrary large initial data, it was shown in~\cite{Li+Winkler1} that the presence of quadratic logistic damping of the form $r u - \mu u^2$ suffices to ensure global weak solvability. Remarkably, a very recent work~\cite{ZY2026} establishes the existence of global weak solutions to \eqref{1} and further proves $L^\infty$-bounds for arbitrary large initial data even in the absence of a logistic source, thereby removing the smallness restriction entirely. In a different direction, when the diffusion term in \eqref{1} is replaced by $\nabla \cdot (u^{m-1} v \nabla u)$ with $m \ge 1$, global existence and $L^p$-boundedness were shown in~\cite{Wu2024}, while $L^\infty$-bounds and large-time behavior were subsequently addressed in~\cite{Wu2026}. By contrast, the three-dimensional situation is considerably more challenging, and only a handful of results have been reported so far. Specifically,~\cite{Li2022} establishes global weak solutions and $L^p$-bounds for $\alpha \in (7/6, 13/9)$, whereas the more recent works~\cite{DAY2026, DW2026} obtain an analogous result, additionally proving $L^\infty$-boundedness and describing the long-time dynamics for $\alpha \in (3/2, 19/12)$.

A natural question that arises is whether one can establish global existence of weak solutions and derive uniform $L^\infty$-bounds in a general spatial dimension. One of the first attempts to address this question was undertaken in~\cite{WinklerCVPDE} for the case $\chi = 0$, wherein the author established the existence of weak solutions and further characterized their large-time behavior. Furthermore, if the second equation in \eqref{1} is replaced by
\[
v_t = \Delta v - u^\beta v,
\]
with $\beta < 2/n$ for $n \ge 2$, then the global existence and boundedness of weak solutions, along with their large-time asymptotics for the critical case $\alpha = 2$, have been shown in~\cite{TSD2026}. Moreover, in the presence of logistic damping of the form $r u - \mu u^{\kappa}$ in the first equation of \eqref{1}, it was demonstrated in~\cite{Pan2024} that if $\kappa > \frac{n+2}{2}$, then a global weak solution exists in time. Furthermore, the large-time behavior of such solutions was subsequently established in~\cite{Pan2026}. However, when $\beta = 1$, $\chi>0$ and without logistic source, no study to date has resolved the global existence and boundedness problem in dimensions $n \ge 4$ with $\alpha \geq 1$. The main contribution of our work is to provide a partial answer to this open question. Specifically, we have successfully established the existence of global weak solutions in dimensions $n = 3, 4, 5$ and, moreover, have derived uniform $L^\infty$-bounds for these solutions. Before presenting our main results, we first clarify the assumptions imposed on the initial data, as detailed below:
\begin{equation} \label{ini}
    \begin{cases}
        u_0 \in W^{1, \infty}(\Omega) \, \text{is nonnegative with } u_0 \not\equiv 0 \qquad \text{and }\\
        v_0\in W^{1, \infty}(\Omega) \, \text{is positive in }\Bar{\Omega}.
    \end{cases}
\end{equation}
Let us state our main result in the following theorem.
\begin{theorem} \label{thm1}
    Let $\Omega \subset \mathbb{R}^n$ with $n \in \left \{3,4,5 \right \}$ be a bounded convex domain with smooth boundary, and let $\alpha \in \left [1, \frac{5}{2}-\frac{n}{4} \right )$, $\chi>0$ and $\ell \geq 0$. Then for any choice of initial data satisfying the condition \eqref{ini}, there exist functions 
    \begin{equation} \label{thm1-1}
        \begin{cases}
            u \in C^0\left ( \Bar{\Omega}\times [0, \infty)  \right ), \qquad \text{and }\\
            v \in C^0\left ( \Bar{\Omega}\times [0, \infty)  \right ) \cap C^{2,1}\left ( \Bar{\Omega}\times (0, \infty)  \right ),
        \end{cases}
    \end{equation}
    such that $u \geq 0$ and $v>0$ in $\Bar{\Omega}\times (0, \infty)$, and that $(u,v)$ forms a global weak solution of \eqref{1} in the sense of Definition \ref{Def}. Furthermore, this solution is uniformly bounded in the sense that
    \begin{align} \label{thm1-2}
        \sup_{t>0} \left \{ \left \| u(\cdot,t) \right \|_{L^\infty(\Omega)}+ \left \| v(\cdot,t) \right \|_{W^{1,\infty}(\Omega)} \right \} <\infty.
    \end{align}
\end{theorem}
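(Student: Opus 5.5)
We will argue through a regularized problem. For $\varepsilon\in(0,1)$ we consider
\[
u_{\varepsilon t}=\nabla\cdot\big((u_\varepsilon+\varepsilon)v_\varepsilon\nabla u_\varepsilon\big)-\chi\nabla\cdot\big(u_\varepsilon^\alpha v_\varepsilon\nabla v_\varepsilon\big)+\ell u_\varepsilon v_\varepsilon,\qquad v_{\varepsilon t}=\Delta v_\varepsilon-u_\varepsilon v_\varepsilon,
\]
with initial data $(u_0+\varepsilon,v_0)$. Standard parabolic theory yields local classical solutions with a blow-up criterion in $L^\infty$. From the $v$-equation we get $0<v_\varepsilon\le\|v_0\|_\infty$. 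Integrating the $u$-equation gives $\int_\Omega u_\varepsilon+\ell\int_\Omega v_\varepsilon\equiv$ const, hence $L^1$-bounds. Integrating the $v$-equation in time gives $\int_0^\infty\!\int_\Omega u_\varepsilon v_\varepsilon\le C$. A local-in-time positive lower bound for $v_\varepsilon$ on bounded intervals comes from comparison. Everything below is aimed at $\varepsilon$-independent estimates.

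\textbf{Step 1: a coupled energy functional.} Following Winkler's approach for $\chi=0$, we test the $v$-equation against $-\Delta v_\varepsilon/v_\varepsilon$, i.e.\ we study $\frac{d}{dt}\int_\Omega|\nabla v_\varepsilon|^2/v_\varepsilon$. Convexity of $\Omega$ gives $\partial_\nu|\nabla v_\varepsilon|^2\le0$, so the boundary term is harmless. This produces the dissipation $\int_\Omega v_\varepsilon|D^2\ln v_\varepsilon|^2$, which controls $\int_\Omega|\nabla v_\varepsilon|^4/v_\varepsilon^3$. We combine this with $\frac{d}{dt}\int_\Omega u_\varepsilon^p$ for $p$ in a suitable range. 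That computation gives the dissipation $\int_\Omega u_\varepsilon^{p-1}v_\varepsilon|\nabla u_\varepsilon|^2$ and a cross term $\chi\int_\Omega u_\varepsilon^{p+\alpha-2}v_\varepsilon\nabla u_\varepsilon\cdot\nabla v_\varepsilon$. By Young's inequality the cross term is bounded by $\int_\Omega u_\varepsilon^{p+2\alpha-3}v_\varepsilon|\nabla v_\varepsilon|^2$. We then estimate this by $\eta\int_\Omega|\nabla v_\varepsilon|^4/v_\varepsilon^3$ plus $C\int_\Omega u_\varepsilon^{2(p+2\alpha-3)}v_\varepsilon^5$. The last term is absorbed using a weighted Gagliardo–Nirenberg-type inequality applied to $\varphi=u_\varepsilon^{(p+1)/2}$ with weight $v_\varepsilon$, together with the $L^1$-bound on $u_\varepsilon$ and the bound on $\|v_\varepsilon\|_\infty$. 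The reaction term $\ell\int_\Omega u_\varepsilon^p v_\varepsilon$ is handled the same way.

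\textbf{Step 2 (the main obstacle).} The crux is to prove the functional inequalities that make this absorption close, uniformly for $n\le5$. These are interpolations of the type
\[
\int_\Omega u^{q}v\le\eta\int_\Omega u^{p-1}v|\nabla u|^2+\eta\int_\Omega v|D^2\ln v|^2+C,
\]
with constants independent of a lower bound for $v$. The exponent bookkeeping produces the condition $\alpha<\frac52-\frac n4$ and forces $n\le5$. I would first try the Sobolev embedding of $u^{(p+1)/2}$ in dimension $n$, with $v$ distributed via Hölder's inequality, and then use the $L^1$ mass to interpolate. If a single $p$ does not work directly, I would bootstrap. Starting from $p$ slightly above $1$ (using $\int_\Omega u_\varepsilon\ln u_\varepsilon$ when $p=1$), each step gives a time-integrated space-time bound that raises the admissible $p$, and we continue until $\sup_t\|u_\varepsilon\|_{L^p}$ is bounded for some $p>n/2$.

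\textbf{Step 3: regularity, Moser iteration, and passage to the limit.} With $u_\varepsilon v_\varepsilon$ bounded in $L^\infty_tL^p_x$ for $p>n/2$, heat-semigroup estimates give $\sup_t\|v_\varepsilon\|_{W^{1,q}}\le C$ for large $q$, and iterating gives $\|\nabla v_\varepsilon\|_\infty\le C$. The degeneracy of the diffusion at $v=0$ is now the only remaining difficulty. Since it is unavoidable for large $t$, we run a Moser iteration on $\int_\Omega u_\varepsilon^{p_k}$ with $p_k\to\infty$. The diffusion term $\int_\Omega u^{p-1}v|\nabla u|^2$ controls the taxis term because $|\nabla v|$ is bounded, so $v$ appears linearly on both sides. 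The remaining $\int_\Omega u^{p+2\alpha-3}v$ is absorbed through the weighted inequality from Step 2 with constants growing polynomially in $p$. This yields a uniform $L^\infty$ bound, which gives \eqref{thm1-2} at the $\varepsilon$-level. Finally, Aubin–Lions compactness, using the time-derivative bounds in dual spaces, together with Hölder continuity from degenerate-parabolic theory applied locally where $v$ is positive (e.g.\ Porzio–Vespri), gives convergence along $\varepsilon_j\to0$ to a limit $(u,v)$ with the regularity \eqref{thm1-1}. This limit satisfies Definition \ref{Def}, and the bounds persist by lower semicontinuity.
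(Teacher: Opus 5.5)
Your outline has the right architecture (regularize, energy functional, bootstrap to $L^{p}$ with $p>\frac n2$, gradient bound for $v$, Moser iteration, compactness). However, the step you call ``the main obstacle'' is the actual content of the theorem, and the mechanism you sketch for it would not close.

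\textbf{The energy/bootstrap step.} The functional $\int_\Omega|\nabla v_\varepsilon|^2/v_\varepsilon$ gives only the unweighted dissipation $\int_\Omega|\nabla v_\varepsilon|^4/v_\varepsilon^3$. It also generates a cross term $-2\int_\Omega\nabla u_\varepsilon\cdot\nabla v_\varepsilon$ that you never treat. Your Young split $u^{p+2\alpha-3}v|\nabla v|^2\le\eta|\nabla v|^4/v^3+Cu^{2(p+2\alpha-3)}v^5$ doubles the power of $u$. A weighted $L^2$ Gagliardo--Nirenberg step on $u^{(p+1)/2}v^{1/2}$ then produces $\int_\Omega u^{p+1}|\nabla v|^2/v$. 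With only $\int|\nabla v|^4/v^3$ at hand, that can be traded only for $\int_\Omega u^{2p+2}v$, which doubles the power again, so the scheme is circular.

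The missing idea is a $u$-weighted dissipation. The paper pairs $\int_\Omega u_\varepsilon\ln u_\varepsilon$ with $\int_\Omega|\nabla v_\varepsilon|^4/v_\varepsilon^3$ (Lemmas \ref{W}--\ref{L3}) to get $\int_0^\infty\!\int_\Omega u_\varepsilon|\nabla v_\varepsilon|^4/v_\varepsilon^3\le C$. It then splits $u^{1+\frac2np_0}v|\nabla v|^2\le u^{1+\frac4np_0}v^5+u|\nabla v|^4/v^3$, so one factor of $u$ goes onto the gradient term. Finally, Lemma \ref{LM} (a $W^{1,1}\hookrightarrow L^{\frac n{n-1}}$ argument) bounds $\int_\Omega u^{1+\frac4np_0}v$ by $\int u^{p_0-1}v|\nabla u|^2+\int u|\nabla v|^4/v^3+\int uv$, all of which are time-integrable from the previous level.

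The resulting recursion $p_k=\min\{1+\frac4np_{k-1},\frac2np_{k-1}+4-2\alpha\}$ tends to $\frac{2n(2-\alpha)}{n-2}$, which exceeds $\frac n2$ exactly when $\alpha<\frac52-\frac n4$. With your split, even with Lemma \ref{LM} available, you would need $2(p+2\alpha-3)\le1+\frac4np_0$. Its fixed point $\frac{n(7-4\alpha)}{2(n-2)}$ exceeds $\frac n2$ only if $\alpha<\frac{9-n}4$, which is empty for $n=5$.

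Your remainders must also be time-integrable, such as $C\int uv$ or $C\int v|\nabla v|^2$ (cf.\ \eqref{local-5} and Lemma \ref{local-5'}), not $+C$. The dissipation degenerates as $v\to0$, so a constant remainder yields only linear growth in $t$, not \eqref{thm1-2}.

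\textbf{Step 3.} A bound for $u_\varepsilon$ in $L^{p_0}$ with $p_0\in(\frac n2,n]$ does not yield $\|\nabla v_\varepsilon\|_{L^\infty}$ through the semigroup; that needs $p_0>n$. Iterating on the $v$-equation alone cannot improve the integrability of $u_\varepsilon$.

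The paper needs an additional step, Lemma \ref{Lp}. It couples $\frac1p\int_\Omega u_\varepsilon^p$ with $\int_\Omega|\nabla v_\varepsilon|^{2p+2}/v_\varepsilon^{2p+1}$, whose evolution \eqref{W-1} supplies $\int_\Omega u_\varepsilon|\nabla v_\varepsilon|^{2p+2}/v_\varepsilon^{2p+1}$. It closes with Lemma \ref{PL2}, which is exactly where $p_0>\frac n2$ enters. This gives all $L^p$ bounds and, crucially, $\int_0^\infty\!\int_\Omega|\nabla v_\varepsilon|^{2n+2}/v_\varepsilon^{2n+1}\le C$ by \eqref{Lp-3} with $p=n-1$.

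Only then does Lemma \ref{Unif-v} bound $\nabla v_\varepsilon$. The Moser step rests on Lemma \ref{Moser}, whose remainders $\bigl(\int u^{p/2}\bigr)^{2(p+1)/p}\int|\nabla v|^{2n+2}/v^{2n+1}$ and $K\eta^{-\kappa}p^{2\kappa}\bigl(\int u^{p/2}\bigr)^2\int uv$ are time-integrable and have the structure required by Lemma \ref{MW}. The inequality from your Step 2 has neither property.
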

\begin{remark}
   Our results improve and extend several previous contributions in the literature. In particular, the global existence of weak solutions in dimension three was first established in \cite{Li2022} for the parameter range 
\(
\alpha \in \left( \frac{7}{6}, \frac{13}{9} \right).
\)
Subsequently, the authors of \cite{DAY2026} and \cite{DW2026} obtained global weak solutions and, moreover, proved uniform $L^\infty$ bounds for solutions when
\(
\alpha \in \left( \frac{3}{2}, \frac{19}{12} \right).
\)
In contrast, the present work significantly broadens these ranges. Specifically, we are able to handle all $\alpha \in \left[1, \frac{7}{4} \right)$ with $n=3$.
\end{remark}
\begin{remark}
   To the best of our knowledge, the present work provides the first result concerning the global existence and boundedness of solutions to the system \eqref{1} in dimensions $n \in \left \{4,5 \right \}$.
\end{remark}
\begin{remark}
   We leave open the problem of determining the sharp (or optimal) range of the parameter $\alpha$ in the three-dimensional setting $n=3$.
\end{remark}

Thanks to the uniform $L^\infty$ bound of solutions as established in Theorem \ref{thm1}, we can establish the asymptotic behavior of solutions in the following theorem. 

\begin{theorem} \label{thm2}
    Under the assumptions of Theorem \ref{thm1}, the global weak solution $(u,v)$ of \eqref{1} exhibits the following long-time behavior: there exists $u_\infty \in C^0(\overline{\Omega})$ such that
    \begin{align} \label{thm2-1}
            u(\cdot,t) \to u_\infty \quad \text{and} \quad v(\cdot,t) \to 0 \quad \text{in } L^\infty(\Omega) \text{ as } t\to\infty.
    \end{align}
     Here the limit function satisfies $u_\infty = w(\cdot,1)$ with $w\in C^0 \left ( \Bar{\Omega}\times [0,1] \right )$ being a weak solution of 
    \begin{equation*}
        \begin{cases}
            w_\tau = \nabla \cdot (a(x, \tau ) w \nabla w)- \nabla \cdot (b(x,\tau ) w^\alpha)+\ell a(x,\tau ) w, \quad &x\in \Omega, \tau \in (0,1), \\
            \nabla w \cdot \nu =0\quad &x\in \partial \Omega, \tau \in (0,1), \\
            w(x,0)= w_0(x), \quad&x\in \Omega
        \end{cases}
    \end{equation*}
    in the sense that $w^2 \in L^1_{loc}\left ( [0,1); W^{1,1}(\Omega)\right )$ and that 
    \begin{align} \label{thm2-2}
        - \int_0^1 \int_\Omega w \varphi_\tau - \int_\Omega w_0\varphi(\cdot,0)&= -\frac{1}{2}\int_0^1\int_\Omega a(x, \tau) \nabla w^2 \cdot \nabla \varphi+ \int_0^1\int_\Omega b(x,\tau ) w^\alpha \cdot \nabla \varphi\notag \\&\quad+ \ell \int_0^1\int_\Omega a(x,\tau )w \varphi 
    \end{align}
    for all $\varphi \in C^1_0\left ( \Bar{\Omega}\times [0, 1) \right )$,
   where for $(x,\tau )\in \Omega \times (0,1)$
    \begin{align*}
        a(x,\tau ):= L \cdot \frac{v(x, \phi^{-1}(\tau ))}{\left \| v(\cdot, \tau ) \right \|_{L^\infty(\Omega)} } \quad \text{and }\quad b(x,\tau ):= L \cdot \frac{v(x, \phi^{-1}(\tau )) \nabla v(x, \phi^{-1}(\tau ))}{\left \| v(\cdot, \tau ) \right \|_{L^\infty(\Omega)}} ,
    \end{align*}
    with 
    \begin{align*}
    L:= \int_0^\infty \left \| v(\cdot,s) \right \|_{L^\infty(\Omega)}
    \end{align*}
    and 
    \begin{align*}
        \phi(t):= \frac{1}{L} \cdot \int_0^t \left \| v(\cdot,s) \right \|_{L^\infty(\Omega)}\, ds , \quad t \geq 0 \, \text{and } \tau = \phi(t),
    \end{align*}
    are such that there exists $C>0$ satisfying 
    \begin{align} \label{thm2-3}
        \frac{1}{C} \leq a(x,\tau) \leq C \quad \text{and } \quad |b(x,\tau)| \leq C \quad \text{for all }(x,\tau )\in \Omega \times (0,1).
    \end{align}
\end{theorem}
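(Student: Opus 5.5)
The plan is to take the uniform bound \eqref{thm1-2} from Theorem \ref{thm1} as the starting point and proceed in four steps.

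\textbf{Step 1: decay of $v$ and finiteness of $L$.} Integrating the second equation gives
\[
\frac{d}{dt}\int_\Omega v=-\int_\Omega uv\le 0.
\]
Integrating the first equation gives $\int_\Omega u(\cdot,t)\ge\int_\Omega u_0>0$, and this mass is nondecreasing. Together these yield $\int_0^\infty\int_\Omega uv\le\int_\Omega v_0$. To pass from this to time integrability of $\|v\|_{L^\infty}$, I would test the $v$-equation against $v$, giving $\int_0^\infty\int_\Omega|\nabla v|^2<\infty$. Combined with the uniform Hölder/ $W^{1,\infty}$ bounds on $v$ from Theorem \ref{thm1} and parabolic regularity, this gives $v\to 0$ in $L^\infty$. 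The harder part is the rate, since we need $L=\int_0^\infty\|v\|_{L^\infty}<\infty$.

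\textbf{Step 2: exponential decay, the main obstacle.} I would aim for exponential decay of $\|v(\cdot,t)\|_{L^\infty}$. The route is to show that $u$ is bounded below by a positive constant on $\Omega$ for all large times, since then $v_t\le\Delta v-\delta v$ and the comparison principle gives $\|v\|_{L^\infty}\le Ce^{-\delta t}$. Lower bounds are delicate because of the degeneracy at $u=0$. Two routes seem available:
\begin{itemize}[label={}]
\item (a) Use a Harnack-type lower bound for $v$, namely $\inf v\ge c\sup v$, which holds for Neumann heat-type problems with bounded potential $u$. Then $\int_\Omega uv\ge c\|v\|_\infty\int_\Omega u_0$, so $\frac{d}{dt}\int_\Omega v\le -c'\|v\|_\infty$. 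Integrating gives $L\le\int_\Omega v_0/c'<\infty$ directly, without needing a lower bound on $u$.
\item (b) Otherwise, compare with $\|v\|_\infty$ through the ODE $y'\le -c'y$.
\end{itemize}
I expect (a) to be the key point. It would also give $v\ge c\|v\|_\infty$ pointwise, which is exactly the lower bound $a\ge 1/C$ in \eqref{thm2-3}. The Harnack-type estimate would be proved via the $W^{1,\infty}$ bound and the convexity of $\Omega$.

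\textbf{Step 3: bounds on the coefficients.} For the bounds \eqref{thm2-3}, $a\le L$ is immediate. For $b$, I need $|\nabla v|\le C\|v\|_\infty$. I would obtain this from smoothing estimates on unit time intervals, $\|\nabla v(t)\|_\infty\le C\|v(t-1)\|_\infty$, together with a bound $\|v(t-1)\|_\infty\le C\|v(t)\|_\infty$. The latter follows from $v_t\ge \Delta v-\|u\|_\infty v$ via comparison, combined with the Harnack bound from Step 2.

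\textbf{Step 4: time change and identification of the limit.} Set $\tau=\phi(t)$ and $w(x,\tau)=u(x,\phi^{-1}(\tau))$. Then $dt/d\tau=L/\|v\|_\infty$, and the weak formulation of Definition \ref{Def} transforms into \eqref{thm2-2}. The $L^\infty$ bound on $w$ and uniform parabolicity, given by $a\ge 1/C$ on the set where $w>0$, yield equicontinuity of $w$ on $\bar\Omega\times[0,1]$ through known Hölder estimates for degenerate porous-medium-type equations with bounded coefficients (DiBenedetto). Hence $w$ extends continuously to $\tau=1$. Since $\phi(t)\to1$ as $t\to\infty$, this gives $u(\cdot,t)\to w(\cdot,1)=:u_\infty$ uniformly, and $v\to 0$ was already shown in Step 1.
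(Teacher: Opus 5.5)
Your plan is essentially the paper's proof. The Harnack bound of Lemma~\ref{Hanack}, combined with $\frac{d}{dt}\int_\Omega v=-\int_\Omega uv$ and $\int_\Omega u\ge\int_\Omega u_0$, gives $L<\infty$ and $a\ge\lambda L$. Then $v\to0$ follows from $L<\infty$ and the monotonicity \eqref{local-4}, so your Step 1 gradient argument is not needed. The bound $|b|\le L\|\nabla v\|_{L^\infty}$ is immediate from $v\le\|v(\cdot,t)\|_{L^\infty(\Omega)}$, so your Step 3 is superfluous. The time change together with uniform H\"older bounds of Porzio--Vespri/DiBenedetto type then finishes the argument; the paper simply runs all of this on $(u_\varepsilon,v_\varepsilon)$ uniformly in $\varepsilon$ and passes to the limit.

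The one point to fix is how you propose to prove the Harnack bound. The $W^{1,\infty}$ estimate is not scale-invariant as $v\to0$, so it cannot give $\inf v\ge c\sup v$. Comparison on $[t-1,t]$ does give it: $e^{-\|u\|_{L^\infty}}e^{\Delta}v(\cdot,t-1)\le v(\cdot,t)\le e^{\Delta}v(\cdot,t-1)$, and the Neumann heat kernel at time $1$ is bounded above and below by positive constants. For $t<1$, use $v\ge e^{-\|u\|_{L^\infty}t}\inf v_0$ together with $\|v(\cdot,t)\|_{L^\infty}\le\|v_0\|_{L^\infty}$.
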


Our main ideas are motivated by the recent work of \cite{DAY2026}, in which a bootstrap argument was employed to obtain $L^p$ bounds for solutions. However, upon closer inspection, we find that their method can be sharpened significantly. In fact, by refining the analysis in \cite{DAY2026} more carefully, there remains considerable room for improvement regarding the admissible range of the parameter $\alpha$.

To begin with, we follow a standard strategy from \cite{Li2022} and introduce the functional
\begin{align*}
    F(t) = \int_\Omega u(\cdot,t) \ln u(\cdot,t) + a \int_\Omega \frac{|\nabla v(\cdot,t)|^4}{v^3(\cdot,t)} \qquad \text{for all } t>0,
\end{align*}
with some constant $a>0$ to be chosen appropriately. By carefully tracing the time evolution of $F$ along solutions of the regularized system, we are able to show that $F$ remains globally bounded in time. Moreover, as a direct consequence of this boundedness, we obtain the integral estimate
\begin{align} \label{intro-1}
    \int_0^T \int_\Omega u \frac{|\nabla v|^4}{v^3} + \int_0^T \int_\Omega v |\nabla u|^2 \leq C \qquad \text{for all } T>0,
\end{align}
for some constant $C>0$ independent of time, provided that $\alpha$ lies in the range
\[
\alpha \in \left[1, \frac{3}{2} + \frac{1}{n} \right).
\]
This result not only improves upon Lemma 3.3 of \cite{DAY2026}, where a similar estimate was established only in the three-dimensional case with $\alpha \in \left( \frac{3}{2}, \frac{5}{3} \right]$, but also provides a useful estimate in higher dimensions, particularly for $n \in \{4,5\}$. The key reason we are able to extend the admissible range of $\alpha$ is the crucial inequality established in Lemma~\ref{LM}, which plays a central role throughout our analysis.

Next, we apply a bootstrap argument to prove that $u$ is uniformly bounded in $L^{p_0}$ for some $p_0 > \frac{n}{2}$. More precisely, we combine the estimate \eqref{intro-1} with the key inequality from Lemma~\ref{LM} to determine the optimal exponent $p_0$ for which such a bound holds. Our analysis reveals that the largest possible exponent is given by
\begin{align*}
    p_0 = \frac{2n(2-\alpha)}{n-2},
\end{align*}
and it turns out that $p_0 > \frac{n}{2}$ whenever
\[
\alpha \in \left[1, \frac{5}{2} - \frac{n}{4} \right).
\]
In the physically relevant three-dimensional case $n=3$, this yields an $L^{\frac{3}{2}+}$ bound for $u$ for all $\alpha \in \left[1, \frac{7}{4} \right)$. This significantly improves the corresponding result in \cite{DAY2026}, where the authors only obtained a similar bound in the narrower range $\alpha \in \left( \frac{3}{2}, \frac{19}{12} \right)$. Third, we show that the above $L^{p_0}$ bound can be upgraded to $L^p$ bounds for every $p>1$. This is achieved by employing a standard energy estimate, which is now permissible thanks to the key inequality provided by Lemma~\ref{PL2}.  Finally, we derive uniform $L^\infty$ bounds for $u$ by means of a Moser iteration procedure. Such an approach was already developed in \cite{Winkler-2024} for the two-dimensional setting. Here, we extend this method to arbitrary dimensions $n \ge 2$, an extension that is made possible by the refined estimates in Lemma~\ref{Moser}. The $L^\infty$ bounds obtained in this step are essential for the compactness argument and ultimately allow us to pass to the limit in the approximating system, thereby completing the proof of Theorem \ref{thm1}. 

 Finally, in order to establish the asymptotic behavior of solutions, we adopt the methodological framework developed in \cite{Wu2026}, wherein the central strategy consists of exploiting $L^\infty$  estimates to derive a pointwise Harnack inequality. Leveraging this inequality enables us to recast the first equation of system \eqref{1} as a nonlinear degenerate parabolic equation, from which the long-time behavior of the solutions can then be rigorously established.

The rest of this paper is organized as follows. Section~\ref{S2} is dedicated to the preliminary setup: we define global weak solutions of \eqref{1}, introduce the approximating system \eqref{2}, and collect several auxiliary inequalities that will be needed later. In Section~\ref{S3}, we derive the principal a priori estimates for the regularized system \eqref{2}. These estimates are then used in Section~\ref{S4} to prove $L^p$ bounds for the solutions for arbitrary $p>1$. Next, Section~\ref{S5} establishes the $L^\infty$ bounds that allow us to complete the compactness argument and conclude the proof of Theorem \ref{thm1}. Finally, in Section \ref{S6}, we establish the large time behavior of solutions and provide the proof of Theorem \ref{thm2}.

\section{Local existence of a family of regularized systems and several useful inequalities } \label{S2}

Let us begin this section by introducing the meaning of global weak solution of the system \eqref{1}.

\begin{definition} \label{Def}
Let $n \in \{3, 4,5\}$ and $\Omega \subset \mathbb{R}^n$ be a bounded domain with smooth boundary. Suppose that $\chi > 0$, $\ell \geq 0$, $\alpha \geq 1 $, and that $u_0 \in L^1(\Omega)$ and $v_0 \in L^\infty(\Omega)$ satisfy $u_0 \geq 0$ and $v_0 \geq 0$. By a global weak solution $(u, v)$ of \eqref{1} we mean a pair $(u, v)$ of nonnegative functions satisfying
\begin{align} \label{Def.1}
&u \in L^1_{{loc}}(\Omega \times [0, \infty)) \quad \text{and} \quad 
v \in L^\infty_{{loc}}(\Omega \times [0, \infty)) \cap L^1_{{loc}}([0, \infty); W^{1,1}(\Omega))
\end{align}
and
\begin{align} \label{Def.2}
&u^2  \in L^1_{{loc}}([0,\infty); W^{1,1}(\Omega)) \quad \text{and} \quad 
u^\alpha \nabla v \in L^1_{{loc}}(\Omega \times [0, \infty); \mathbb{R}^n),
\end{align}
which are such that
\begin{align} \label{Def.3}
-\int_0^\infty \int_\Omega u \phi_t - \int_\Omega u_0 \phi(\cdot, 0)
&= -\frac{1}{2} \int_0^\infty \int_\Omega  v \nabla u^2 \cdot \nabla \phi + \chi \int_0^\infty \int_\Omega u^\alpha v \nabla v \cdot \nabla \phi + \ell \int_\Omega uv \phi
\end{align}
for all $\phi \in C_0^\infty(\Omega \times [0, \infty))$ fulfilling $\frac{\partial \phi}{\partial \nu} = 0$ on $\partial \Omega \times (0, \infty)$, and that
\begin{align} \label{Def.4}
\int_0^\infty \int_\Omega v \phi_t + \int_\Omega v_0 \phi(\cdot, 0)
= \int_0^\infty \int_\Omega \nabla v \cdot \nabla \phi + \int_0^\infty \int_\Omega u v \phi
\end{align}
for all $\phi \in C_0^\infty(\Omega \times [0, \infty))$.
\end{definition}

With the aim of constructing weak solutions to \eqref{1} in the sense of Definition~\ref{Def}, we adopt an approximation scheme inspired by \cite{Li+Winkler} and \cite{Winkler2022NA}. To this end, we introduce the following regularized system:
\begin{equation} \label{2}
\begin{cases}
    u_{\varepsilon t} = \nabla \cdot (u_\varepsilon v_\varepsilon \nabla u_\varepsilon) - \chi \nabla \cdot (u_\varepsilon^\alpha v_\varepsilon \nabla v_\varepsilon) + \ell u_\varepsilon v_\varepsilon, \qquad & x \in \Omega, \ t>0, \\[4pt]
    v_{\varepsilon t} = \Delta v_\varepsilon - u_\varepsilon v_\varepsilon, \qquad & x \in \Omega, \ t>0, \\[4pt]
    \dfrac{\partial u_\varepsilon}{\partial \nu} = \dfrac{\partial v_\varepsilon}{\partial \nu} = 0, \qquad & x \in \partial\Omega, \ t>0, \\[4pt]
    u_\varepsilon(x,0) = u_0(x) + \varepsilon, \qquad v_\varepsilon(x,0) = v_0(x), \qquad & x \in \Omega,
\end{cases}
\end{equation}
for $\varepsilon \in (0,1)$. \\

The following lemma asserts the local existence of classical solutions to \eqref{2} as well as the corresponding extensibility criteria.

\begin{lemma} \label{local}
    Let $\Omega \subset \mathbb{R}^n$ with $n \in \left \{3,4,5 \right \}$ be a smoothly bounded domain, let $\alpha \geq 1$, and assume \eqref{ini}. Then for each $\varepsilon \in (0,1)$, there exist $T_{\rm max, \varepsilon} \in (0,\infty]$ and at least one pair $(u_\varepsilon, v_\varepsilon)$ of functions 
    \begin{equation} \label{local-1}
        \begin{cases}
            u_\varepsilon \in C^0 \left ( \Bar{\Omega}\times [0,T_{\rm max, \varepsilon}) \right ) \cap C^{2,1} \left ( {\Omega}\times (0,T_{\rm max, \varepsilon}) \right ) \\
              v_\varepsilon \in C^0 \left ( \Bar{\Omega}\times [0,T_{\rm max, \varepsilon}) \right ) \cap C^{2,1} \left ( {\Omega}\times (0,T_{\rm max, \varepsilon}) \right )
        \end{cases}
    \end{equation}
    which are such that $u_\varepsilon>0$ and $v_\varepsilon >0 $ in $\Bar{\Omega}\times (0,T_{\rm max, \varepsilon })$, that $(u_\varepsilon, v_\varepsilon )$ solves \eqref{2} in the classical sense, and that 
    \begin{align} \label{local-2}
        \text{if }T_{\rm max, \varepsilon}< \infty, \quad \text{then }\quad \limsup_{t \to T_{\rm max, \varepsilon}} \left \|u_\varepsilon(\cdot,t) \right \|_{L^\infty(\Omega)} = \infty.
    \end{align}
    Moreover, this solution satisfies 
    \begin{align} \label{local-3}
        \int_\Omega u_{0 } \leq \int_\Omega u_{\varepsilon }(\cdot,t) \leq \int_\Omega u_0 + \ell \int_\Omega v_0+|\Omega| \qquad \text{for all }t \in (0,T_{\rm max, \varepsilon}) \quad \text{and } \varepsilon \in (0,1)
    \end{align}
    and 
    \begin{align} \label{local-4}
        \left \| v_{\varepsilon} (\cdot,t) \right \|_{L^\infty(\Omega)} \leq \left \| v_\varepsilon(\cdot,t_0) \right \|_{L^\infty(\Omega)} \qquad \text{for all } t_0 \geq 0, t \in (t_0,T_{\rm max, \varepsilon}) \quad \text{and } \varepsilon \in (0,1)
    \end{align}
     as well as 
     \begin{align} \label{local-5}
        \int_{t_0}^{T_{\rm max, \varepsilon}} \int_\Omega u_\varepsilon v_\varepsilon  \leq  \int_\Omega v_\varepsilon( \cdot,t_0)  \qquad \text{for all }t_0 \geq 0 \quad \text{and } \varepsilon \in (0,1).
    \end{align}
\end{lemma}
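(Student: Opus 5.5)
The plan is to obtain local existence from a standard fixed-point argument for the non-degenerate regularized problem, and then to derive \eqref{local-3}--\eqref{local-5} by elementary comparison and integration arguments.

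\textbf{Positivity and uniform parabolicity on short times.} Since $u_0+\varepsilon\ge\varepsilon>0$ and $v_0$ is positive on $\Bar{\Omega}$, there is $\delta>0$ with $u_\varepsilon(\cdot,0)\ge\delta$ and $v_\varepsilon(\cdot,0)\ge\delta$. As long as $u_\varepsilon$ and $v_\varepsilon$ stay above some positive constant, the first equation of \eqref{2} is uniformly parabolic. Its diffusion coefficient is $u_\varepsilon v_\varepsilon$, which is locally Lipschitz in $(u_\varepsilon,v_\varepsilon)$, and the same holds for the drift $u_\varepsilon^\alpha v_\varepsilon\nabla v_\varepsilon$ when $u_\varepsilon>0$.

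\textbf{Fixed-point construction.} I would set up a Schauder fixed point argument in the spirit of \cite{Li+Winkler, Winkler2022NA}.
\begin{itemize}
\item Fix $\tilde u$ in a closed convex subset of $C^0(\Bar{\Omega}\times[0,T])$ consisting of functions bounded between $\delta/2$ and $\|u_0\|_{L^\infty(\Omega)}+2$.
\item Solve the linear Neumann problem $v_t=\Delta v-\tilde u v$, which gives $v>0$ and, by parabolic $L^p$ theory, a bound for $\nabla v$ in a Hölder space.
\item Solve the quasilinear problem for $u$ with coefficients frozen at $(\tilde u,v)$, using Ladyzhenskaya--Solonnikov--Ural'tseva theory and Hölder estimates.
\item Choose $T$ small so that the map $\tilde u\mapsto u$ is compact and preserves the set. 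Hölder regularity gives compactness; positivity is preserved for short times by continuity.
\end{itemize}
Schauder's theorem then yields a classical solution. A standard continuation argument gives $T_{\rm max,\varepsilon}$ together with the criterion \eqref{local-2}. For this one must check that an $L^\infty$ bound on $u_\varepsilon$ forces a positive lower bound for $v_\varepsilon$, by comparison with $\min v_0\, e^{-Mt}$. It also forces a lower bound for $u_\varepsilon$: the comparison principle applied to the $u$-equation in nondivergence form, using $\ell\ge0$, shows $u_\varepsilon$ stays positive. Together with the upper bounds, these keep the problem uniformly parabolic, so the solution can be extended.

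\textbf{Main obstacle.} The chief difficulty is justifying this extensibility. One must show that boundedness of $u_\varepsilon$ alone excludes degeneration, namely that $u_\varepsilon\to0$ somewhere cannot occur in finite time. I would handle this by writing the $u$-equation as $u_t=uv\Delta u+v|\nabla u|^2+\dots$ and applying the minimum principle, which gives a lower bound of the form $u_\varepsilon\ge\varepsilon e^{-Ct}$ on bounded time intervals. Here $C$ depends on bounds for $\Delta v$ coming from parabolic regularity.

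\textbf{The estimates \eqref{local-3}--\eqref{local-5}.}
\begin{itemize}
\item \eqref{local-4}: this follows from the maximum principle, since $v_t\le\Delta v$.
\item \eqref{local-5}: integrate the second equation of \eqref{2} over $\Omega\times(t_0,t)$. This gives $\int_\Omega v_\varepsilon(\cdot,t)+\int_{t_0}^t\int_\Omega u_\varepsilon v_\varepsilon=\int_\Omega v_\varepsilon(\cdot,t_0)$, and then drop the nonnegative first term.
\item \eqref{local-3}: integrate the first equation of \eqref{2}, which gives $\frac{d}{dt}\int_\Omega u_\varepsilon=\ell\int_\Omega u_\varepsilon v_\varepsilon\ge0$. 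The lower bound follows from monotonicity, since $\int_\Omega u_\varepsilon(\cdot,0)\ge\int_\Omega u_0$. For the upper bound, combine this identity with \eqref{local-5} at $t_0=0$ to get $\int_\Omega u_\varepsilon(\cdot,t)\le\int_\Omega u_0+\varepsilon|\Omega|+\ell\int_\Omega v_0$, and use $\varepsilon<1$.
\end{itemize}
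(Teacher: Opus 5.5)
Your derivations of \eqref{local-3}--\eqref{local-5} are correct; they are exactly the elementary integrations and comparisons one expects. For local existence and extensibility the paper gives no argument of its own and defers to Lemma 2.1 of \cite{Winkler2022NA}; your fixed-point-plus-continuation scheme is the standard route behind such a citation.

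The step that does not go through as written is the one you yourself call the main obstacle. An $L^\infty$ bound on $u_\varepsilon$ gives, via comparison and semigroup or $L^p$ theory for $v_{\varepsilon t}=\Delta v_\varepsilon-u_\varepsilon v_\varepsilon$:
\begin{itemize}
\item a positive lower bound for $v_\varepsilon$;
\item an $L^\infty$ bound for $\nabla v_\varepsilon$ (indeed $v_\varepsilon\in W^{2,1}_p$ for every finite $p$);
\item but no $L^\infty$ bound for $\Delta v_\varepsilon$, since the source $u_\varepsilon v_\varepsilon$ is only known to be bounded.
\end{itemize}
To bound $\Delta v_\varepsilon$ you need Schauder theory, and hence a H\"older bound for $u_\varepsilon$ that is uniform up to $T_{\rm max,\varepsilon}$. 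If you take that bound from the nondegenerate Ladyzhenskaya--Solonnikov--Ural'tseva theory for the $u$-equation, you need $u_\varepsilon$ bounded away from zero. That is precisely what your minimum-principle argument was supposed to deliver, so the argument is circular.

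The missing ingredient is a H\"older estimate for bounded weak solutions of porous-medium-type degenerate equations. Theorem 1.3 of \cite{PV}, the tool the paper itself uses in Lemma~\ref{Schauder}, applies to $u_{\varepsilon t}=\nabla\cdot(u_\varepsilon v_\varepsilon\nabla u_\varepsilon-\chi u_\varepsilon^\alpha v_\varepsilon\nabla v_\varepsilon)+\ell u_\varepsilon v_\varepsilon$. Its constants depend only on $\|u_\varepsilon\|_{L^\infty(\Omega)}$, on upper and positive lower bounds for $v_\varepsilon$, and on $\|\nabla v_\varepsilon\|_{L^\infty(\Omega)}$; they never depend on a lower bound for $u_\varepsilon$. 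With it the argument closes as follows:
\begin{itemize}
\item Schauder theory bounds $\Delta v_\varepsilon$ on $\bar\Omega\times[\tau,T_{\rm max,\varepsilon})$ for any $\tau>0$.
\item Since $\alpha\ge1$ and $u_\varepsilon\le M$, the zero-order coefficient $-\chi u_\varepsilon^{\alpha-1}(|\nabla v_\varepsilon|^2+v_\varepsilon\Delta v_\varepsilon)+\ell v_\varepsilon$ in the nondivergence form is then bounded.
\item Comparison with a spatially homogeneous subsolution gives $u_\varepsilon\ge \min u_\varepsilon(\cdot,\tau)\,e^{-C(t-\tau)}$.
\end{itemize}
The same a priori H\"older bounds are what make the existence time uniform when you restart near $T_{\rm max,\varepsilon}$. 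A minor point: with $v_0$ only in $W^{1,\infty}(\Omega)$, $\nabla v$ is bounded but not H\"older up to $t=0$. Boundedness is all your fixed-point step actually needs.
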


\begin{proof}
The proofs of all statements are a direct adaptation of the argument in Lemma 2.1 of \cite{Winkler2022NA} to the current framework. 
\end{proof}

The following lemma provides an elementary result which will be applied in Lemma \ref{L4} to obtain $L^{p_0}$ bounds for $u_\varepsilon$ with some $p_0> \frac{n}{2}$.
\begin{lemma} \label{D}
    Let $n \in \{3,4,5\}$, $\alpha \in [1, \frac{3}{2} + \frac{1}{n})$, $p_0 = 1$, and define
    \begin{align*}
        p_k = \min \left\{ 1 + \frac{4}{n} p_{k-1}, \; \frac{2}{n} p_{k-1} + 4 - 2\alpha \right\} \qquad \text{for all } k \geq 1.
    \end{align*}
    Then $(p_k)_{k \geq 0}$ is strictly increasing and converges to
    \begin{align*}
        \lim_{k \to \infty} p_k = \frac{2n(2-\alpha)}{n-2}.
    \end{align*}
\end{lemma}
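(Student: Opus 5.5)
Write $p_k=g(p_{k-1})$ with
\[
g(p):=\min\Big\{f_1(p),f_2(p)\Big\},\qquad f_1(p):=1+\tfrac{4}{n}p,\qquad f_2(p):=\tfrac{2}{n}p+4-2\alpha .
\]
The plan is to analyse this one-dimensional iteration through its fixed points. Both $f_1$ and $f_2$ are affine and strictly increasing, so $g$ is continuous and nondecreasing.

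\emph{Step 1: locate the fixed point and the branch switch.} The fixed point of $f_2$ solves $p=\frac{2}{n}p+4-2\alpha$, which gives
\[
p_*=\frac{n(4-2\alpha)}{n-2}=\frac{2n(2-\alpha)}{n-2}.
\]
Since $\alpha<2$, we have $p_*>0$. Next, $f_1(p)\le f_2(p)$ holds exactly when $\frac{2}{n}p\le 3-2\alpha$, i.e. for $p\le \bar p:=\frac{n(3-2\alpha)}{2}$. Two sub-cases arise.

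If $\alpha\ge\frac32$, then $\bar p\le 0$. Hence $g=f_2$ on $[1,\infty)$, and the iteration is the affine contraction $p_k=f_2(p_{k-1})$ with slope $\frac{2}{n}<1$. It converges monotonically to $p_*$, strictly increasing provided $p_0=1<p_*$.

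If $\alpha<\frac32$, I will check that $\bar p<p_*$, so that the minimum near $p_*$ is attained by $f_2$:
\[
\frac{n(3-2\alpha)}{2}<\frac{2n(2-\alpha)}{n-2}
\iff (n-2)(3-2\alpha)<4(2-\alpha)
\iff 3n-14<2\alpha(n-4).
\]
For $n=3,4$ the left side is negative, so this holds for every $\alpha\ge1$. For $n=5$ it reads $1<2\alpha$, which is true.

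\emph{Step 2: verify the start lies below the fixed point.} The condition $1<p_*$ is equivalent to $n-2<2n(2-\alpha)$, i.e. $\alpha<2-\frac{n-2}{2n}=\frac32+\frac1n$. This is precisely the hypothesis on $\alpha$, and it is the only place where that hypothesis is used.

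\emph{Step 3: monotonicity and boundedness by induction.} I will show by induction that $p_{k-1}<p_k<p_*$.

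For the bound, $g$ is nondecreasing and $g(p_*)=\min\{f_1(p_*),p_*\}$. Since $p_*>\bar p$, we get $f_1(p_*)>f_2(p_*)=p_*$, so $g(p_*)=p_*$. Therefore $p<p_*$ implies $g(p)\le p_*$, and in fact $g(p)<p_*$, because $f_2(p)<p_*$ for $p<p_*$ by strict monotonicity.

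For strict increase, I need $g(p)>p$ on $[1,p_*)$. First, $f_2(p)>p$ for $p<p_*$, since $f_2(p)-p=(1-\frac2n)(p_*-p)>0$. Second, $f_1(p)-p=1+(\frac4n-1)p$. This is clearly positive for $n=3,4$. For $n=5$ it equals $1-\frac{p}{5}$, which is positive as long as $p<5$.

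The main obstacle is this $n=5$ case, where one must ensure that the $f_1$ branch is never active at some $p\ge5$. I would handle it as follows. If $p\ge\bar p$, then $g=f_2$ and there is nothing to check. Otherwise $p<\bar p=\frac{5(3-2\alpha)}{2}\le\frac52<5$, since $\alpha\ge1$. So $f_1(p)>p$ whenever $f_1$ realises the minimum. Hence $g(p)>p$ on $[1,p_*)$ in all cases.

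\emph{Step 4: identify the limit.} The sequence $(p_k)$ is increasing and bounded by $p_*$, so it converges to some $q\in(1,p_*]$. By continuity of $g$, the limit satisfies $g(q)=q$. But $g(p)>p$ on $[1,p_*)$, so $q=p_*=\frac{2n(2-\alpha)}{n-2}$, which is the claimed limit.
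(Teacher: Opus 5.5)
Your proof is correct and uses essentially the paper's monotone fixed-point iteration, written out more carefully. The paper gets strict increase by propagating $p_1>p_0$ through the increasing map $g$, and it asserts without checking that the limit equation ``solves'' to $\frac{2n(2-\alpha)}{n-2}$. Your inequality $g(p)>p$ on $[1,p_*)$ settles both monotonicity and the identification of the limit, and your $n=5$ case analysis is exactly what rules out the spurious $f_1$-fixed point $p=5$.

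One small slip: for $n=3$ the right-hand side $2\alpha(n-4)=-2\alpha$ is also negative, so $3n-14<2\alpha(n-4)$ reads $\alpha<\frac52$ rather than holding for every $\alpha\ge1$. It is still true in your sub-case $\alpha<\frac32$, and the check $\bar p<p_*$ is not needed anyway: $g(p)\le f_2(p)<p_*$ and $g(p)>p$ on $[1,p_*)$ already force the limit to be $p_*$.
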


\begin{proof}
    Since $\alpha < \frac{3}{2}+\frac{1}{n}$, we find that 
    \begin{align*}
        p_1 = \min \left \{ 1+\frac{4}{n}, \frac{2}{n}+4-2\alpha \right \} > p_0=1.
    \end{align*}
    By induction, we deduce that $p_{k+1}>p_k$ for any $k \in \mathbb{N}$. Moreover, we also have 
    \begin{align*}
        p_0=1 < \frac{2n(2-\alpha)}{n-2}
    \end{align*}
    because $\alpha < \frac{3}{2}+\frac{1}{n}$. Hence, by using induction, we also have
    \begin{align*}
        p_k < \frac{2n(2-\alpha)}{n-2} \qquad \text{for all } k \in \mathbb{N}.
    \end{align*}
    Therefore, the sequence $(p_k)_{k \geq 0}$ converges to some value $p_*$ satisfying 
    \begin{align*}
        p_* = \min \left \{1+\frac{4}{n}p_*, \frac{2}{n}p_*+4- 2\alpha \right \}.
    \end{align*}
    Solving this equation gives us that 
    \begin{align*}
        p_*=\frac{2n(2-\alpha)}{n-2},
    \end{align*}
    which finishes the proof.
\end{proof}
We turn now to the derivation of a pivotal inequality, which will play an essential role in Lemma~\ref{M} and, more broadly, in the iterative scheme used to prove $L^p$ bounds for $u_\varepsilon$. Its proof is a straightforward consequence of the embedding $W^{1,1}(\Omega) \hookrightarrow L^{\frac{n}{n-1}}(\Omega)$ together with several elementary inequalities.

\begin{lemma} \label{LM}
Let $\Omega \subset \mathbb{R}^n$ with $n \in \left \{3,4,5 \right \}$ be a smoothly bounded domain and let $M>0$ and $1\leq p < \frac{n}{(n-4)_+}$. There exists $C=C(M,p)>0$ such that if $\int_\Omega f^p \leq M$, then 
    \begin{align}
        \int_\Omega f^{1+\frac{4}{n}p}g \leq C \int_\Omega f^{p-1}g |\nabla f|^2 +C \int_\Omega f \frac{|\nabla g|^4}{g^3} +C \int_\Omega f g
    \end{align}
    is valid for all $f\in C^1(\Bar{\Omega})$ and $g \in C^1(\Bar{\Omega}) $ such that $f>0$ and $g>0$ in $\Bar{\Omega}$.
\end{lemma}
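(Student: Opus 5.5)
The plan is to apply the embedding $W^{1,1}(\Omega)\hookrightarrow L^{\frac{n}{n-1}}(\Omega)$ to a product $w:=f^{\sigma}g^{\frac{n-1}{n}}$. The power of $g$ is fixed so that $w^{\frac{n}{n-1}}=f^{\frac{\sigma n}{n-1}}g$ carries exactly one factor of $g$, as in the left-hand side. Every term in the claimed inequality is homogeneous of degree one in $g$ (under $g\mapsto\lambda g$ both sides scale by $\lambda$). So no bound on $g$ is needed, and all the bookkeeping concerns powers of $f$.

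Choose $\sigma$ by $\frac{\sigma n}{n-1}=1+\frac{4}{n}p$, so that $\|w\|_{L^{\frac{n}{n-1}}(\Omega)}^{\frac{n}{n-1}}=I:=\int_\Omega f^{1+\frac4n p}g$. The Sobolev inequality then gives
\begin{align*}
I^{\frac{n-1}{n}}\le C\int_\Omega f^{\sigma-1}g^{\frac{n-1}{n}}|\nabla f|+C\int_\Omega f^{\sigma}g^{-\frac1n}|\nabla g|+C\int_\Omega f^{\sigma}g^{\frac{n-1}{n}} .
\end{align*}
Write $D:=\int_\Omega f^{p-1}g|\nabla f|^2$ and $E:=\int_\Omega f\frac{|\nabla g|^4}{g^3}$, and treat the three terms as follows.
\begin{itemize}
\item For the first term, split $f^{\sigma-1}g^{\frac{n-1}{n}}|\nabla f|=\bigl(f^{\frac{p-1}{2}}g^{\frac12}|\nabla f|\bigr)\cdot f^{\sigma-\frac{p+1}{2}}g^{\frac{n-2}{2n}}$. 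Cauchy--Schwarz bounds it by $D^{1/2}\bigl(\int_\Omega f^{2\sigma-1-p}g^{\frac{n-2}{n}}\bigr)^{1/2}$.
\item For the second term, write $f^{\sigma}g^{-\frac1n}|\nabla g|=\bigl(f^{\frac14}g^{-\frac34}|\nabla g|\bigr)\cdot f^{\sigma-\frac14}g^{\frac34-\frac1n}$. Hölder with exponents $4,\frac43$ bounds it by $E^{1/4}\bigl(\int_\Omega f^{\frac{4\sigma-1}{3}}g^{1-\frac{4}{3n}}\bigr)^{3/4}$.
\end{itemize}

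Each remaining integral has the form $\int_\Omega f^{a}g^{\theta}$ with $\theta<1$. For these, split off a power of $f$ and apply Hölder with exponents $\frac1\theta$ and $\frac1{1-\theta}$:
\begin{align*}
\int_\Omega f^{a}g^{\theta}\le\Bigl(\int_\Omega f^{b}g\Bigr)^{\theta}\Bigl(\int_\Omega f^{p}\Bigr)^{1-\theta},
\end{align*}
where $a=\theta b+(1-\theta)p$. The factor $\int_\Omega f^p\le M$ is simply a constant. It remains to check $1\le b\le 1+\frac4n p$. Then Hölder interpolation in the measure $g\,dx$ gives $\int_\Omega f^{b}g\le\bigl(\int_\Omega fg\bigr)^{1-\lambda}I^{\lambda}$ for a suitable $\lambda\in[0,1]$. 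This leaves an inequality of the form
\begin{align*}
I^{\frac{n-1}{n}}\le C D^{\frac12}\,(\cdots)^{\frac12}+C E^{\frac14}\,(\cdots)^{\frac34}+C\,(\cdots),
\end{align*}
where each $(\cdots)$ is a product of powers of $I$ and $\int_\Omega fg$. The third Sobolev term is handled in the same way, with $\theta=\frac{n-1}{n}$.

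The final step applies Young's inequality to absorb the powers of $I$ into the left-hand side. Homogeneity guarantees that, after absorption, $I$ is bounded by $C(D+E+\int_\Omega fg)$. Indeed, counting total powers of $f$ and $g$ shows that each product balances to degree $\frac{n-1}{n}$ in the scaling; the exact value $1+\frac4n p$ is the critical exponent that makes this work. The absorption requires the power of $I$ on the right to be strictly less than $\frac{n-1}{n}$, which follows from the Hölder step contributing a strictly positive weight on $\int_\Omega f^p$.

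I expect the main obstacle to be the exponent bookkeeping in the Hölder/interpolation step. One must check that the exponents $b$ obtained there satisfy $1\le b\le 1+\frac4np$; this is where the hypotheses should enter. Concretely, for the first term $2\sigma-1-p=\frac{2(n-1)(n+4p)}{n^2}-1-p$ with $\theta=\frac{n-2}{n}$. Requiring the corresponding $b\ge 1$, together with the analogous requirement for the $E$-term, should reduce exactly to $p<\frac{n}{(n-4)_+}$ and $p\ge1$. If for some $n$ one of these exponents falls below $1$, I would first replace it by $1$ at the cost of a factor $\int_\Omega fg$. Failing that, I would adjust the power $\frac{n-1}{n}$ of $g$ in $w$ to restore the admissible range.
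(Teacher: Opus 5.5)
You follow the paper's skeleton: the same test function $f^{\sigma}g^{\frac{n-1}{n}}$ with $\sigma=\frac{(n-1)(n+4p)}{n^2}$ and the same $W^{1,1}(\Omega)\hookrightarrow L^{\frac{n}{n-1}}(\Omega)$ step. Your handling of the $\nabla g$-term is exactly the paper's, since the split lands on $b=1+\frac4np$, i.e.\ the paper's $\lambda_1=\frac{3n-4}{3n}$. The zero-order term also works, with $b=1+\frac4np-\frac{p}{n-1}\in[1,1+\frac4np]$.

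\textbf{The gap.} It lies in the bookkeeping you deferred for the $\nabla f$-term. There $a=\gamma=2\sigma-1-p$ and $\theta=\frac{n-2}{n}$, and your split $a=\theta b+(1-\theta)p$ gives $b=\frac{n^2+6np-2n-8p-n^2p}{n(n-2)}$. Concretely:
\begin{itemize}
\item $n=3$: $b=1+\frac p3$, which is fine, with $\lambda=\frac14$;
\item $n=4$: $b=1$, which is fine;
\item $n=5$: $b=1-\frac p5<1$ for \emph{every} $p>0$.
\end{itemize}
So the requirement $b\ge1$ does not reduce to $p<\frac{n}{(n-4)_+}$ as you predicted. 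It holds unconditionally for $n=3,4$ and fails outright for $n=5$.

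Your first fallback, read literally, is not valid. For $b<1$ there is no bound of $\int_\Omega f^bg$ by $\int_\Omega fg$ and $I$: take $f\equiv\delta\to0$, so that $\delta^b\int_\Omega g\gg\delta\int_\Omega g$, and $\int_\Omega g$ is not at your disposal. Your second fallback also fails as stated, because changing the power of $g$ in $w$ destroys the identity $w^{\frac{n}{n-1}}=f^{1+\frac4np}g$.

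\textbf{The repair.} Move the split, not the exponent. Write $f^{\gamma}g^{\theta}=(f^bg)^{\theta}f^{\gamma-\theta b}$. Hölder then works for any $b$ with $0\le\frac{\gamma-\theta b}{1-\theta}\le p$, i.e.\ $b\in[\frac{\gamma-(1-\theta)p}{\theta},\frac{\gamma}{\theta}]$, and you must pick $b$ in this interval intersected with $[1,1+\frac4np]$. For $n=5$ one has $\gamma=\frac{15+7p}{25}$, and $b=1$ is admissible. This gives $\int_\Omega f^{\gamma}g^{\frac35}\le(\int_\Omega fg)^{\frac35}(\int_\Omega f^{\frac{7p}{10}})^{\frac25}\le C(M)(\int_\Omega fg)^{\frac35}$. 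Hence the $\nabla f$-term is at most $CD^{\frac12}(\int_\Omega fg)^{\frac3{10}}\le C(D+\int_\Omega fg)^{\frac45}$, and your absorption closes.

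With that fix your route is a leaner version of the paper's. The paper first uses Young to reach $(\int_\Omega f^{\gamma}g^{\frac{n-2}{n}})^{\frac{n}{n-2}}$ and reduces it, case by case, to $\int_\Omega f^pg$. It then interpolates $\int_\Omega f^pg$ between $\int_\Omega fg$ and $I$, which is the only place where $p<1+\frac4np$, i.e.\ $p<\frac{n}{(n-4)_+}$, is used. Your version never touches $\int_\Omega f^pg$ and does not need that hypothesis. It also sidesteps the second inequality in \eqref{LM.5}, which requires $\frac{(n-2)p}{n\gamma}\le1$; for $n=5$ this holds only when $p\le\frac{15}{8}$.
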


\begin{proof}
    Thanks to the continuity of the embedding $W^{1,1}(\Omega) \hookrightarrow  L^{\frac{n}{n-1}}(\Omega)$, we can find $c_1>0$ such that 
    \begin{align*}
       \int_\Omega \varphi^{\frac{n}{n-1}} \leq c_1  \left \{ \int_\Omega |\nabla \varphi| \right \}^\frac{n}{n-1} + c_1 \left \{ \int_\Omega |\varphi| \right \}^\frac{n}{n-1}  \qquad \text{for all } \varphi \in W^{1,1}(\Omega).
    \end{align*}
    Making use of this, we can find $c_2>0$ such that 
    \begin{align} \label{LM.1}
\int_\Omega f^{1+\frac{4}{n}p} g &\leq c_1 \left \{ \int_\Omega \left | \nabla \left ( f^{\frac{(n-1)(n+4p)}{n^2}} g^{\frac{n-1}{n}} \right ) \right |  \right \}^{\frac{n}{n-1}} +c_1 \left \{ \int_\Omega f^{\frac{(n-1)(n+4p)}{n^2}} g^{\frac{n-1}{n}}  \right \}^{\frac{n}{n-1}} \notag \\
         &\leq c_2 \left \{ \int_\Omega f^\frac{4np-n-4p}{n^2} g^{\frac{n-1}{n}} |\nabla f|  \right \}^{\frac{n}{n-1}} + c_2 \left \{ \int_\Omega f^\frac{n^2+4np-n-4p}{n^2}g^{-\frac{1}{n}} |\nabla g|  \right \}^{\frac{n}{n-1}} \notag \\
         &\quad+c_1 \left \{ \int_\Omega f^{\frac{n^2+4np-n-4p}{n^2}} g^{\frac{n-1}{n}}  \right \}^{\frac{n}{n-1}}.
    \end{align}
  Putting $\eta_1 = \frac{1}{4 M^{\frac{4}{3n-4}}}$ and applying H\"older's inequality and Young's inequality, we deduce that 
       \begin{align} \label{LM.2}
            c_2 \left \{ \int_\Omega f^\frac{n^2+4np-n-4p}{n^2}g^{-\frac{1}{n}} |\nabla g|  \right \}^{\frac{n}{n-1}} &\leq c_2 \left \{ \int_\Omega f \frac{|\nabla g|^4}{g^3} \right \}^\frac{n}{4(n-1)} \cdot \left \{ \int_\Omega f^{\frac{3n^2+16np-4n-16p}{3n^2}} g^\frac{3n-4}{3n} \right \}^\frac{3n}{4(n-1)} \notag\\
            &\leq \eta_1 \left \{ \int_\Omega f^{\frac{3n^2+16np-4n-16p}{3n^2}} g^\frac{3n-4}{3n} \right \}^\frac{3n}{3n-4} + c_3 \int_\Omega f \frac{|\nabla g|^4}{g^3},
       \end{align} 
    where $c_3>0$. The condition $1\leq p < \frac{n}{(n-4)_+}$ with $n \in \left \{3,4,5 \right \}$ guarantees that 
    \begin{align*}
        p< \frac{3n^2+16np-4n-16p}{3n^2} < \frac{n+4p}{n}.
    \end{align*}
    One can verify that 
    \begin{align*}
        \frac{3n^2+16np-4n-16p}{3n^2} = \lambda_1 \left (1+\frac{4}{n}p \right ) +p(1-\lambda_1)
    \end{align*}
    with $\lambda_1= \frac{3n-4}{3n}$. Hence, we apply H\"older's inequality to infer that 
    \begin{align*} 
       \eta_1   \left \{ \int_\Omega f^{\frac{3n^2+16np-4n-16p}{3n^2}} g^\frac{3n-4}{3n} \right \}^\frac{3n}{3n-4} &\leq  \eta_1 \left \{ \int_\Omega f^{1+\frac{4}{n}p} \right \}^{\frac{3n\lambda_1}{3n-4}} \cdot\left \{ \int_\Omega f^p \right \}^{\frac{3n(1-\lambda_1)}{3n-4}} \notag \\
        &\leq  \eta_1  M^\frac{4}{3n-4}  \int_\Omega f^{1+\frac{4}{n}p} \notag\\
        &\leq \frac{1}{4} \int_\Omega f^{1+\frac{4}{n}p},
    \end{align*}
    where we have used 
    \begin{equation*}
        \frac{3n(1-\lambda_1)}{3n-4} = \frac{4}{3n-4}.
    \end{equation*}
    This, together with \eqref{LM.2} implies that 
    \begin{align} \label{LM.3}
         c_2 \left \{ \int_\Omega f^\frac{n^2+4np-n-4p}{n^2}g^{-\frac{1}{n}} |\nabla g|  \right \}^{\frac{n}{n-1}} \leq  \frac{1}{4} \int_\Omega f^{1+\frac{4}{n}p}+c_3 \int_\Omega f \frac{|\nabla g|^4}{g^3}.
    \end{align}    
    In light of H\"older's inequality and Young's inequality, we infer that 
    \begin{align} \label{LM.4}
        c_2 \left \{ \int_\Omega f^\frac{4np-n-4p}{n^2} g^{\frac{n-1}{n}} |\nabla f|  \right \}^{\frac{n}{n-1}} &\leq c_2 \left \{ \int_\Omega f^{p-1}g |\nabla f|^2 \right \}^\frac{n}{2(n-1)} \cdot \left \{ \int_\Omega f^{\frac{n^2+8np-n^2p-8p-2n}{n^2}}g^{\frac{n-2}{n}} \right \}^\frac{n}{2(n-1)} \notag \\
        &\leq    \eta_2\left \{ \int_\Omega f^{\frac{n^2+8np-n^2p-8p-2n}{n^2}}g^{\frac{n-2}{n}} \right \}^\frac{n}{n-2}+c_4\int_\Omega f^{p-1}g |\nabla f|^2, 
    \end{align}
    where $\eta_2= \min \left \{ \frac{1}{4}, \frac{1}{4M^2} \right \}$ and $c_4>0$. Setting 
    \[
    \gamma= \frac{n^2+8np-n^2p-8p-2n}{n^2},
    \]
one can verify that $0<\gamma \leq p$ if $n=3$ and $ p \geq \frac{3}{2}$ or $4\leq n \leq  5$. In this case, we invoke H\"older's inequality to deduce that 
\begin{align} \label{LM.5}
\eta_2\left \{ \int_\Omega f^{\frac{n^2+8np-n^2p-8p-2n}{n^2}}g^{\frac{n-2}{n}} \right \}^\frac{n}{n-2} &\leq  \eta_2 \left \{ \int_\Omega f^p  g^{\frac{(n-2)p}{n \gamma}} \right \}^\frac{n \gamma}{p(n-2)} \notag \\
&\leq \eta_2 \left \{ \int_\Omega f^p \right \}^{\frac{\gamma n}{p(n-2)}-1} \cdot \int_\Omega f^p g \notag \\
&\leq  \eta_2 M^{\frac{\gamma n}{p(n-2)}-1} \int_\Omega f^p g. 
\end{align}
    In the remaining case where $n=3$ and $1\leq p < \frac{3}{2}$, we find that $\gamma= \frac{7}{9}p+\frac{1}{3} \in \left (p,1+\frac{4}{3}p \right )$ and therefore have
    \begin{align*}
        \frac{7}{9}p+\frac{1}{3} = \lambda_2 \left  ( 1+\frac{4}{3}p \right )+(1-\lambda_2)p
    \end{align*}
    with $\lambda_2= \frac{3-2p}{3p} \in (0,1)$. Hence, applying H\"older's inequality yields
    \begin{align} \label{LM.6}
         \left \{ \int_\Omega f^{ \frac{7}{9}p+\frac{1}{3}}g^\frac{1}{3}  \right \}^3 \leq \left \{ \int_\Omega f^{1+\frac{4}{3}p} g \right \}^{3\lambda_2} \cdot \left \{ \int_\Omega f^p g^{\frac{1-3\lambda_2}{3-3\lambda_2}} \right \}^{3-3\lambda_2}.
    \end{align}
  In the case $p=1$, we have $3\lambda_2=1$, which, together with \eqref{LM.6}, implies
    \begin{align} \label{LM.7}
       \eta_2 \left \{ \int_\Omega f^{ \frac{7}{9}p+\frac{1}{3}}g^\frac{1}{3}  \right \}^3 &\leq  \eta_2 \left \{ \int_\Omega f^p \right \}^2 \cdot \int_\Omega f^{1+\frac{4}{3}p}g  \notag \\
       &\leq \eta_2 M^2 \int_\Omega f^{1+\frac{4}{3}p}g \notag \\
       &\leq \frac{1}{4}\int_\Omega f^{1+\frac{4}{3}p}g.
    \end{align}
  For $p \in \left(1, \frac{3}{2}\right)$, we obtain $3\lambda_2 < 1$, which, when combined with \eqref{LM.6} and Young's inequality, yields
   \begin{align}\label{LM.8}
          \eta_2 \left \{ \int_\Omega f^{ \frac{7}{9}p+\frac{1}{3}}g^\frac{1}{3}  \right \}^3  &\leq \eta_2 \int_\Omega f^{1+\frac{4}{3}p}g  + \eta_2 \left ( \int_\Omega f^p g^{\frac{1-3\lambda_2}{3-3\lambda_2}} \right )^{\frac{3-3\lambda_2}{1-3\lambda_2}} \notag \\
          &\leq \frac{1}{4} \int_\Omega f^{1+\frac{4}{3}p}g +\eta_2 \left \{ \int_\Omega f^p \right \}^{\frac{2}{1-3\lambda_2}} \cdot \int_\Omega f^p g \notag \\
          &\leq \frac{1}{4} \int_\Omega f^{1+\frac{4}{3}p}g +\eta_2 M^{\frac{2}{1-3\lambda_2}} \int_\Omega f^p g.
   \end{align}
   Combining \eqref{LM.5}, \eqref{LM.7} and \eqref{LM.8}, noting that $1 \leq p<1+\frac{4}{n}p$ when $1\leq p<\frac{n}{(n-4)_+}$ and applying Young's inequality, we obtain 
   \begin{align*}
       \eta_2\left \{ \int_\Omega f^{\frac{n^2+8np-n^2p-8p-2n}{n^2}}g^{\frac{n-2}{n}} \right \}^\frac{n}{n-2} &\leq \frac{1}{4} \int_\Omega f^{1+\frac{4}{n}p}g +c_5 \int_\Omega f^p g \notag \\
       &\leq \frac{3}{8} \int_\Omega f^{1+\frac{4}{n}p}g +c_6 \int_\Omega fg,
       \end{align*}
       where $c_5= \max \left \{ \eta_2 M^{\frac{\gamma n}{p(n-2)}-1}, \eta_2 M^{\frac{2}{1-3\lambda_2}} \right \}$ and $c_6>0$.
   This, together with \eqref{LM.4} implies that 
   \begin{align} \label{LM.9}
        c_2 \left \{ \int_\Omega f^\frac{4np-n-4p}{n^2} g^{\frac{n-1}{n}} |\nabla f|  \right \}^{\frac{n}{n-1}} &\leq  \frac{3}{8} \int_\Omega f^{1+\frac{4}{n}p}g +c_4\int_\Omega f^{p-1}g |\nabla f|^2+ c_6 \int_\Omega f g.
   \end{align}
    Setting $\lambda_3= \frac{4np-n-4p}{4np} \in (0,1)$, we find that 
    \begin{align*}
        \frac{n^2+4np-n-4p}{n^2} = \left (1+\frac{4}{n}p \right )\lambda_3 +(1-\lambda_3)
    \end{align*}
    and $\frac{n \lambda_3}{n-1}<1$. Using this and applying H\"older's inequality and Young's inequality, we obtain that 
   \begin{align}\label{LM.10}
       c_1 \left \{ \int_\Omega f^{\frac{n^2+4np-n-4p}{n^2}} g^{\frac{n-1}{n}}  \right \}^{\frac{n}{n-1}} &\leq \left \{ \int_\Omega f^{1+\frac{4}{n}p} g \right \}^{\frac{n \lambda_3}{n-1}} \cdot \left \{ \int_\Omega fg^{\frac{n-n\lambda_3-1}{n(1-\lambda_3)}} \right \}^{\frac{n(1-\lambda_3)}{n-1}} \notag \\
       &\leq \frac{1}{8}\int_\Omega f^{1+\frac{4}{n}p} g  +c_7 \left \{ \int_\Omega fg^{\frac{n-n\lambda_3-1}{n(1-\lambda_3)}} \right \}^{\frac{n(1-\lambda_3)}{n-n \lambda_3-1}} \notag \\
       &\leq  \frac{1}{8}\int_\Omega f^{1+\frac{4}{n}p} g  +c_7 \left \{ \int_\Omega f \right \}^{\frac{1}{n-n\lambda_3-1}} \cdot  \int_\Omega fg  \notag \\
       &\leq \frac{1}{8}\int_\Omega f^{1+\frac{4}{n}p} g  +c_8 \left \{ \int_\Omega f^p \right \}^{\frac{1}{np-np\lambda_3-p}} \cdot \int_\Omega fg \notag \\
       &\leq  \frac{1}{8}\int_\Omega f^{1+\frac{4}{n}p} g  +c_9 \int_\Omega fg
   \end{align}
   where $c_7,c_8$ and $c_9$ are positive constants. Collecting \eqref{LM.1}, \eqref{LM.3}, \eqref{LM.9} and \eqref{LM.10}, we infer that 
   \begin{align*}
       \frac{1}{4}\int_\Omega f^{1+\frac{4}{n}p} g  \leq c_3 \int_\Omega f \frac{|\nabla g|^4}{g^3}+c_4 \int_\Omega f^{p-1}g |\nabla f|^2 +c_{10}\int_\Omega fg
   \end{align*}
   where $c_{10}=c_6+c_9$, which completes the proof.
    
\end{proof}

Thanks to the Gagliardo-Nirenberg interpolation inequality,  we can derive the following inequality which will later be used in proving Lemma \ref{PL2}.

\begin{lemma}\label{PL1}
    Let $\Omega \subset \mathbb{R}^n$ with $n \geq 3$ be a smoothly bounded domain and suppose that $p \geq  1$ and $1< r< \frac{2n}{n-2}$. Then for any $\eta >0$, there exists $C=C(\eta, p)>0$ such that the following holds
    \begin{align} \label{PL1-1}
        \left \| f^{\frac{p+1}{2}} \sqrt{g} \right \|_{L^r(\Omega)}^2 \leq \eta \int_\Omega f^{p-1} g|\nabla f|^2 + \eta \int_\Omega f^{p+1} \frac{|\nabla g|^2}{g} +C \cdot \left \{ \int_\Omega f \right \}^p  \cdot\left \{ \int_\Omega fg \right \}
    \end{align}
    for all $f\in C^1(\Bar{\Omega})$ and $g\in C^1(\Bar{\Omega})$ such that $f>0$ and $g>0$ in $\Bar{\Omega}$.
\end{lemma}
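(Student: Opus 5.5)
Set $\varphi := f^{\frac{p+1}{2}}\sqrt{g}$, so the left-hand side of \eqref{PL1-1} is $\|\varphi\|_{L^r(\Omega)}^2$. Since $1<r<\frac{2n}{n-2}$, we can interpolate $L^r$ between $W^{1,2}(\Omega)$ and a low Lebesgue space with a Gagliardo--Nirenberg inequality. The natural low norm is $L^{\frac{2}{p+1}}$, because
\[
\int_\Omega \varphi^{\frac{2}{p+1}} = \int_\Omega f g^{\frac{1}{p+1}}.
\]
This can be controlled by $\int_\Omega f$ and $\int_\Omega fg$ via Hölder, since $g^{\frac1{p+1}}$ interpolates between $g^0$ and $g^1$:
\[
\int_\Omega f g^{\frac{1}{p+1}} \le \Big(\int_\Omega f\Big)^{\frac{p}{p+1}}\Big(\int_\Omega fg\Big)^{\frac{1}{p+1}}.
\]
Hence
\[
\|\varphi\|_{L^{\frac{2}{p+1}}}^2 \le \Big(\int_\Omega f\Big)^{p}\int_\Omega fg,
\]
which is exactly the last term in \eqref{PL1-1}.

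The Gagliardo--Nirenberg inequality (valid also for exponents below $1$, as in Winkler's and related works) gives
\[
\|\varphi\|_{L^r}\le C_{GN}\|\nabla\varphi\|_{L^2}^{\theta}\|\varphi\|_{L^{\frac{2}{p+1}}}^{1-\theta}+C_{GN}\|\varphi\|_{L^{\frac{2}{p+1}}}.
\]
Here
\[
\theta=\frac{\frac{n(p+1)}{2}-\frac nr}{1-\frac n2+\frac{n(p+1)}{2}}\in(0,1).
\]
It lies in $(0,1)$ precisely because $r<\frac{2n}{n-2}$ (and $r>\frac{2}{p+1}$ holds automatically since $r>1\ge\frac2{p+1}$). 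Squaring and applying Young's inequality with exponents $\frac1\theta$, $\frac1{1-\theta}$ yields, for any $\eta'>0$,
\[
\|\varphi\|_{L^r}^2\le \eta'\|\nabla\varphi\|_{L^2}^2+C(\eta')\|\varphi\|_{L^{\frac{2}{p+1}}}^2 .
\]

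It remains to compute the gradient:
\[
\nabla\varphi=\frac{p+1}{2}f^{\frac{p-1}{2}}\sqrt g\,\nabla f+\frac12 f^{\frac{p+1}{2}}g^{-\frac12}\nabla g .
\]
Therefore
\[
|\nabla\varphi|^2\le \frac{(p+1)^2}{2}f^{p-1}g|\nabla f|^2+\frac12 f^{p+1}\frac{|\nabla g|^2}{g}.
\]
Choosing $\eta'=\eta\cdot\min\{\frac{2}{(p+1)^2},2\}$ produces the first two terms of \eqref{PL1-1} with coefficient $\eta$, and the constant $C$ depends only on $\eta$, $p$, $r$, $n$ and $\Omega$.

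The main obstacle is justifying Gagliardo--Nirenberg with the sub-unit exponent $\frac{2}{p+1}\le1$; this is standard in the chemotaxis literature. If one prefers to avoid it, I would first interpolate with $L^1$ instead and then bound $\|\varphi\|_{L^1}$ by an interpolation between $L^{\frac2{p+1}}$ and $L^r$ plus Young, absorbing the $L^r$ part into the left side.
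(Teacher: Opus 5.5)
Your proposal is correct and follows the paper's proof essentially step by step. The paper likewise applies Gagliardo--Nirenberg between $\|\nabla\varphi\|_{L^2(\Omega)}$ and $\|\varphi\|_{L^{\frac{2}{p+1}}(\Omega)}$ and uses Young's inequality with $\delta=\frac{2\eta}{(p+1)^2}$, which is exactly your $\eta'$ since $p\ge 1$. It then uses the same pointwise bound on $|\nabla\varphi|^2$ and the same Hölder estimate $\{\int_\Omega f g^{\frac{1}{p+1}}\}^{p+1}\le\{\int_\Omega f\}^p\int_\Omega fg$. Your caveat about Gagliardo--Nirenberg with the sub-unit exponent $\frac{2}{p+1}$ addresses a point the paper uses without comment.
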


\begin{proof}
    In light of the Gagliardo-Nirenberg interpolation inequality, we can find $\theta \in (0,1)$ and $c_1>0$ such that 
    \begin{align} \label{PL1.1}
         \left \| f^{\frac{p+1}{2}} \sqrt{g} \right \|_{L^r(\Omega)}^2 \leq c_1 \left \| \nabla \left ( f^{\frac{p+1}{2}} \sqrt{g} \right ) \right \|_{L^2(\Omega)}^{2 \theta} \left \| f^{\frac{p+1}{2}} \sqrt{g} \right \|^{2(1-\theta)}_{L^{\frac{2}{p+1}}(\Omega)} +c_1\left \| f^{\frac{p+1}{2}} \sqrt{g} \right \|^{2}_{L^{\frac{2}{p+1}}(\Omega)}.
    \end{align}
    Setting $\delta= \frac{2\eta }{(p+1)^2}$ and making use of Young's inequality, we deduce that
    \begin{align}\label{PL1.2}
        c_1 \left \| \nabla \left ( f^{\frac{p+1}{2}} \sqrt{g} \right ) \right \|_{L^2(\Omega)}^{2 \theta} \left \| f^{\frac{p+1}{2}} \sqrt{g} \right \|^{2(1-\theta)}_{L^{\frac{2}{p+1}}(\Omega)} &\leq \delta  \left \| \nabla \left ( f^{\frac{p+1}{2}} \sqrt{g} \right ) \right \|_{L^2(\Omega)}^2 +c_2 \left \| f^{\frac{p+1}{2}} \sqrt{g} \right \|^{2}_{L^{\frac{2}{p+1}}(\Omega)} \notag \\
        &\leq \frac{(p+1)^2 \delta }{2} \int_\Omega f^{p-1}g|\nabla f|^2+ \frac{\delta}{2} \int_\Omega f^{p+1} \frac{|\nabla g|^2}{g} \notag\\
        &\quad+c_2 \left \| f^{\frac{p+1}{2}} \sqrt{g} \right \|^{2}_{L^{\frac{2}{p+1}}(\Omega)} \notag \\
        &\leq \eta  \int_\Omega f^{p-1}g|\nabla f|^2+ \eta  \int_\Omega f^{p+1} \frac{|\nabla g|^2}{g} \notag\\
        &\quad+c_2 \left \| f^{\frac{p+1}{2}} \sqrt{g} \right \|^{2}_{L^{\frac{2}{p+1}}(\Omega)} 
    \end{align}
    By H\"older's inequality, we have
    \begin{align}\label{PL1.3}
        \left \| f^{\frac{p+1}{2}} \sqrt{g} \right \|^{2}_{L^{\frac{2}{p+1}}(\Omega)} &= \left \{ \int_\Omega f g^{\frac{1}{p+1}} \right \}^{p+1} \notag \\
        &\leq \left \{ \int_\Omega f \right \}^p \cdot \int_\Omega fg.
    \end{align}
    Collecting \eqref{PL1.1}, \eqref{PL1.2}, \eqref{PL1.3}, we arrive at \eqref{PL1-1}, which completes the proof. 
\end{proof}
To establish the $L^p$ bounds for $u_\varepsilon$ in Lemma~\ref{Lp}, we require the following inequality. Lemma~3.7 in \cite{DAY2026} proves this inequality in the three-dimensional case, and we extend it to arbitrary dimensions using analogous arguments.
\begin{lemma} \label{PL2}
  Let $\Omega \subset \mathbb{R}^n$ with $n \geq 2$ be a smoothly bounded domain.  For $p\geq 1$, $p_0> \frac{n}{2}$,  $L>0$ and $\eta \in (0,1)$, there exists $C=C(\eta, L, p)$ such that whenever $\int_\Omega f^{p_0} \leq L$, 
    \begin{align*}
        \int_\Omega f^{p+2} g \leq \eta \int_\Omega f^{p-1} g |\nabla f|^2 + \eta \int_\Omega f \frac{|\nabla g|^{2p+2}}{g^{2p+1}}+ C\int_\Omega f g
    \end{align*}
    holds for all $f \in C^1(\Bar{\Omega})$ and $g \in C^1(\Bar{\Omega})$ fulfilling $f>0$ and $g>0$ in $\Bar{\Omega}$. 
\end{lemma}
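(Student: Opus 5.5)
We will prove the inequality by combining Lemma \ref{PL1} with a Hölder interpolation against the $L^{p_0}$ bound. The plan has three steps.

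\emph{Step 1: split by Hölder against the $L^{p_0}$ bound.} Write $f^{p+2}g = f\cdot (f^{p+1}g)$ and apply Hölder's inequality with exponents $p_0$ and $p_0'=\frac{p_0}{p_0-1}$:
\begin{align*}
\int_\Omega f^{p+2}g \le \Big\{\int_\Omega f^{p_0}\Big\}^{\frac{1}{p_0}} \Big\{ \int_\Omega (f^{p+1}g)^{p_0'}\Big\}^{\frac{1}{p_0'}} \le L^{\frac{1}{p_0}} \big\| f^{\frac{p+1}{2}}\sqrt g\big\|_{L^{2p_0'}(\Omega)}^2 .
\end{align*}
Since $p_0>\frac n2$, we have $p_0'<\frac{n}{n-2}$, so $r:=2p_0'\in(2,\frac{2n}{n-2})$. (For $n=2$ any finite $r$ is admissible, and Gagliardo--Nirenberg works in the same way.) Taking $r$ slightly larger if needed causes no harm.

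\emph{Step 2: apply Lemma \ref{PL1}.} Lemma \ref{PL1} with this $r$ and a small parameter $\eta'$ gives
\begin{align*}
\int_\Omega f^{p+2}g \le L^{\frac1{p_0}}\eta' \int_\Omega f^{p-1}g|\nabla f|^2 + L^{\frac1{p_0}}\eta'\int_\Omega f^{p+1}\frac{|\nabla g|^2}{g} + C\Big\{\int_\Omega f\Big\}^p\int_\Omega fg .
\end{align*}
The factor $\int_\Omega f$ is bounded by $|\Omega|^{1-1/p_0}L^{1/p_0}$, so the last term is $C'\int_\Omega fg$.

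\emph{Step 3: absorb the cross term.} The middle term is handled by Young's inequality with exponents $\frac{p+1}{p}$ and $p+1$, splitting
\[
f^{p+1}\frac{|\nabla g|^2}{g} = \big(f^{\frac{p(p+2)}{p+1}} g^{\frac{p}{p+1}}\big)\cdot \big(f^{\frac{1}{p+1}}|\nabla g|^2 g^{-\frac{2p+1}{p+1}}\big).
\]
Check the exponents. The $f$-powers sum to $\frac{p^2+2p+1}{p+1}=p+1$. The $g$-powers sum to $\frac{p-2p-1}{p+1}=-1$. Raising the first factor to $\frac{p+1}{p}$ gives $f^{p+2}g$, and raising the second to $p+1$ gives $f\,|\nabla g|^{2p+2}g^{-(2p+1)}$. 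Hence
\[
\int_\Omega f^{p+1}\frac{|\nabla g|^2}{g} \le \int_\Omega f^{p+2}g + \int_\Omega f\frac{|\nabla g|^{2p+2}}{g^{2p+1}} .
\]
For $p=1$ this is just $ab\le a^2/2+b^2/2$ up to constants.

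Finally, choose $\eta'$ so small that $L^{1/p_0}\eta'\le\min\{\frac12,\frac{\eta}{2}\}$. Absorbing $\frac12\int_\Omega f^{p+2}g$ into the left-hand side and multiplying by $2$ yields the claimed inequality with coefficients $\eta$.

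The only step needing care is Step 1: the condition $p_0>\frac n2$ is exactly what makes $2p_0'$ subcritical, so that Lemma \ref{PL1} applies. The remaining steps are routine bookkeeping of constants depending on $\eta$, $L$ and $p$.
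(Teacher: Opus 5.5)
Your proposal is correct and follows essentially the same route as the paper: Hölder's inequality against $\|f\|_{L^{p_0}(\Omega)}$ to reduce to $\|f^{\frac{p+1}{2}}\sqrt g\|_{L^{2p_0/(p_0-1)}(\Omega)}^2$, then Lemma \ref{PL1} at this subcritical exponent, then Young's inequality to turn $\int_\Omega f^{p+1}\frac{|\nabla g|^2}{g}$ into $\int_\Omega f^{p+2}g+\int_\Omega f\frac{|\nabla g|^{2p+2}}{g^{2p+1}}$, and finally absorption. You also make explicit two details the paper leaves implicit: the bound $\int_\Omega f\le |\Omega|^{1-1/p_0}L^{1/p_0}$ that controls the factor $\{\int_\Omega f\}^p$, and the case $n=2$, where Lemma \ref{PL1} (stated only for $n\ge 3$) has to be replaced by its analogue valid for every finite $r$.
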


\begin{proof}
    Since $p_0> \frac{n}{2}$, we have that 
    \[
    1<\frac{2p_0}{p_0-1}< \frac{2n}{n-2}.
    \]
    Applying H\"older's inequality and Young's inequality, using the assumption $\int_\Omega f^{p_0} \leq L$, and employing Lemma \ref{PL1}, we obtain 
    \begin{align*} 
        \int_\Omega f^{p+2}g &= \int_\Omega \left ( f^{\frac{p+1}{2}} g^\frac{1}{2} \right )^2 f \notag \\
        &\leq \left \| f^{\frac{p+1}{2}}g^\frac{1}{2} \right \|_{L^{\frac{2p_0}{p_0-1}}}^2 \left \| f \right \|_{L^{p_0}(\Omega)} \notag \\
        &\leq L^{\frac{1}{p_0}}\left \| f^{\frac{p+1}{2}}g^\frac{1}{2} \right \|_{L^{\frac{2p_0}{p_0-1}}}^2 \notag \\
        &\leq \frac{\eta}{2}\int_\Omega f^{p-1} g |\nabla f|^2 + \frac{\eta}{2} \int_\Omega f^{p+1} \frac{|\nabla g|^{2}}{g}+ c_1\int_\Omega f g \notag \\
        &\leq \frac{\eta}{2}\int_\Omega f^{p-1} g |\nabla f|^2 + \frac{\eta}{2} \int_\Omega f \frac{|\nabla g|^{2p+2}}{g^{2p+1}} +\frac{1}{2}\int_\Omega f^{p+2}g + c_1\int_\Omega f g, 
    \end{align*}
    where $c_1>0$. This further leads to 
    \begin{align*}
         \int_\Omega f^{p+2}g \leq \eta \int_\Omega f^{p-1} g |\nabla f|^2 + \eta \int_\Omega f \frac{|\nabla g|^{2p+2}}{g^{2p+1}} +2 c_1\int_\Omega f g, 
    \end{align*}
    which finishes the proof.
\end{proof}

The following lemma extends Lemma 6.2 in \cite{Winkler-2024} to arbitrary dimensions, and its proof is based on the strategy developed in the proof of Lemma 3.1 in \cite{Wu2026}.

\begin{lemma} \label{Moser}
    Let $\Omega \subset \mathbb{R}^n$ with $n \geq 2$ be a smoothly bounded domain and $p_*>n$. Then there exist $\kappa=\kappa(p_*)>0$ and $K=K(p_*)>0$ such that for any choice of $p\geq p_*$ and $\eta \in (0,1]$,
    \begin{align} \label{Moser-1}
        \int_\Omega f^{p+1}g &\leq \eta \int_\Omega f^{p-1}g |\nabla f|^2+ \eta \cdot \left \{ \int_\Omega f^{\frac{p}{2}} \right \}^{\frac{2(p+1)}{p}} \cdot \int_\Omega \frac{|\nabla g|^{2n+2}}{g^{2n+1}} \notag \\
        &\quad+K\eta ^{-\kappa}p^{2 \kappa} \cdot \left \{ \int_\Omega f^{\frac{p}{2}} \right \}^2 \cdot \int_\Omega fg
    \end{align}
    is valid for arbitrary positive functions $f \in C^1(\Bar{\Omega})$ and $g \in C^1(\Bar{\Omega})$.
\end{lemma}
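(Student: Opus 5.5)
The plan is to write $\int_\Omega f^{p+1}g=\|\psi\|_{L^2(\Omega)}^2$ with $\psi:=f^{\frac{p+1}{2}}\sqrt{g}$ and control this norm by a Gagliardo--Nirenberg inequality. The higher-order quantity is
\[
\nabla\psi=\tfrac{p+1}{2}f^{\frac{p-1}{2}}\sqrt g\,\nabla f+\tfrac12 f^{\frac{p+1}{2}}g^{-\frac12}\nabla g .
\]
The lower-order quantity is a weak norm of $\psi$ that is controlled by $\int_\Omega f^{p/2}$ together with $\int_\Omega fg$. This follows the scheme of Lemma \ref{PL1}, but every constant is tracked explicitly in $p$ and $\eta$ so that the final constant takes the form $K\eta^{-\kappa}p^{2\kappa}$.

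\textbf{Step 1: interpolation and absorption.} Choose $\theta\in(0,1)$ depending only on $n$ (and on $p_*$ through the exponents) and apply
\[
\|\psi\|_{L^2}^2\le c\,\|\nabla\psi\|_{L^2}^{2\theta}\|\psi\|_{L^{q}}^{2(1-\theta)}+c\|\psi\|_{L^{q}}^2
\]
with a small $q$, for instance $q=\frac{2}{p+1}$, so that $\|\psi\|_{L^q}^2=(\int f g^{\frac1{p+1}})^{p+1}$. Young's inequality then gives
\[
\|\psi\|_{L^2}^2\le \delta\|\nabla\psi\|_{L^2}^2+c\,\delta^{-\frac{\theta}{1-\theta}}\|\psi\|_{L^q}^2 .
\]
With $\delta\sim\eta/p^2$, the first part of $|\nabla\psi|^2$ yields $\eta\int f^{p-1}g|\nabla f|^2$, while the constant grows like $(p^2/\eta)^{\kappa}$. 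This is the source of the factor $\eta^{-\kappa}p^{2\kappa}$.

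\textbf{Step 2: the cross term.} Bounding the remaining contribution $\delta\int f^{p+1}\frac{|\nabla g|^2}{g}$ is the main obstacle. Unlike in Lemma \ref{PL2}, it cannot simply be absorbed, because the target weight $\int\frac{|\nabla g|^{2n+2}}{g^{2n+1}}$ carries no factor of $f$. The plan is to apply H\"older's inequality with exponents $n+1$ and $\frac{n+1}{n}$:
\[
\int f^{p+1}\frac{|\nabla g|^2}{g}\le\Big(\int\frac{|\nabla g|^{2n+2}}{g^{2n+1}}\Big)^{\frac1{n+1}}\Big(\int f^{\frac{(p+1)(n+1)}{n}}g\Big)^{\frac{n}{n+1}} .
\]
The factor $\int f^{\frac{(p+1)(n+1)}{n}}g$ is then interpolated between $\int f^{p+1}g$ (through $f^{p+1}g\cdot f^{\frac{p+1}{n}}$) and $\int f^{p/2}$. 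For $p\ge p_*>n$ the exponents are compatible. A Young splitting then produces $\frac12\int f^{p+1}g$, which is absorbed into the left-hand side, plus $\eta(\int f^{p/2})^{\frac{2(p+1)}{p}}\int\frac{|\nabla g|^{2n+2}}{g^{2n+1}}$. The homogeneity check is that both sides scale like $f^{p+1}$: the power $\frac{2(p+1)}{p}$ on $\int f^{p/2}$ is exactly what this requires.

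If the direct H\"older step does not close, the fallback is the route of Lemma 3.1 in \cite{Wu2026}. There one first uses Sobolev embedding to bound $\|\psi\|_{L^{\frac{2n}{n-2}}}$ and then interpolates $f^{p+1}$ against $f^{p/2}$ in $L^{\frac{n}{...}}$. In that approach $\|f\|_{L^\infty}$-type factors are avoided by H\"older's inequality with the weight $\frac{|\nabla g|^{2n+2}}{g^{2n+1}}$ in $L^1$.

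\textbf{Step 3: the lower-order term.} By H\"older's inequality, $\int f g^{\frac1{p+1}}\le(\int f)^{\frac p{p+1}}(\int fg)^{\frac1{p+1}}$. Also $\int f\le|\Omega|^{1-\frac2p}\cdot(\int f^{p/2})^{\frac2p}$ since $p/2\ge1$. Hence
\[
\|\psi\|_{L^q}^2\le C\Big(\int f^{p/2}\Big)^2\int fg ,
\]
with a constant uniform in $p\ge p_*$, because $|\Omega|^{\frac{p-2}{p}\cdot\ldots}$ stays bounded. Combining Steps 1--3 and tracking the exponents, so that $\kappa$ and $K$ depend only on $p_*$ and $n$, yields \eqref{Moser-1}.
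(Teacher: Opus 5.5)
\textbf{There is a genuine gap in Step 2.} Because you measure $\nabla\psi$ in $L^2$, H\"older's inequality against $\frac{|\nabla g|^{2n+2}}{g^{2n+1}}$ leaves $\int_\Omega f^{\frac{(p+1)(n+1)}{n}}g=\int_\Omega (f^{p+1}g)\,f^{\frac{p+1}{n}}$. Its power of $f$ exceeds $p+1$. Interpolating between $\int_\Omega f^{p+1}g$ and $\int_\Omega f^{p/2}$ only reaches powers between $\frac p2$ and $p+1$, so the extra factor $f^{\frac{p+1}{n}}$ could only be paid for by an $L^\infty$ bound on $f$.

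In fact the estimate you want in Step 2 is false. Take $f\approx F$ on $B_\rho(x_0)$ and negligible outside $B_{2\rho}(x_0)$, and $g=\gamma\bigl(1+\phi((x-x_0)/\rho)\bigr)$ with a fixed nonnegative bump $\phi$ whose gradient is bounded below on $B_1$. Then $\int_\Omega f^{p+1}\frac{|\nabla g|^2}{g}\gtrsim F^{p+1}\gamma\rho^{n-2}$. On the other side, $\int_\Omega f^{p+1}g\lesssim F^{p+1}\gamma\rho^n$, $\{\int_\Omega f^{p/2}\}^{\frac{2(p+1)}{p}}\int_\Omega\frac{|\nabla g|^{2n+2}}{g^{2n+1}}\lesssim F^{p+1}\gamma\rho^{n-2+\frac{2n}{p}}$, and $\{\int_\Omega f^{p/2}\}^2\int_\Omega fg\lesssim F^{p+1}\gamma\rho^{3n}$. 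So no choice of constants works as $\rho\to0$. A symptom of this is that the hypothesis $p_*>n$ never enters your computation. Your fallback paragraph only points elsewhere and does not supply the mechanism.

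\textbf{The missing idea, used in the paper's proof,} is to measure the gradient in $L^s$ with a fixed $s<2$, namely $s=\frac{2p_*(n+1)}{(n+3)p_*+2}$.
\begin{itemize}
\item One has $s>\frac{2n}{n+2}$ precisely because $p_*>n$. Hence $\|\psi\|_{L^2}^2\le c\|\nabla\psi\|_{L^s}^{2\theta}\|\psi\|_{L^{2/3}}^{2(1-\theta)}+c\|\psi\|_{L^{2/3}}^2$ holds with $\theta\in(0,1)$ independent of $p$.
\item The $\nabla f$-part returns to $L^2$ by H\"older, since $s<2$.
\item The cross term, after H\"older against $\bigl(\frac{|\nabla g|^{2n+2}}{g^{2n+1}}\bigr)^{\frac{s}{2n+2}}$, leaves $f^{\frac{(n+1)(p+1)s}{2n+2-s}}g^{\frac{ns}{2n+2-s}}=(f^{p+1}g)^{\lambda}f^{r(1-\lambda)}$. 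Here $\lambda=\frac{ns}{2n+2-s}<1$ and $r=\frac{(p+1)s}{2n+2-(n+1)s}\le\frac p2$ for all $p\ge p_*$.
\end{itemize}
The last item is a genuine interpolation: $\int_\Omega f^{p+1}g$ appears to the power $\frac{n}{n+1}$ and is absorbed by Young's inequality. What remains is exactly $\eta\{\int_\Omega f^{p/2}\}^{\frac{2(p+1)}{p}}\int_\Omega\frac{|\nabla g|^{2n+2}}{g^{2n+1}}$. Setting $s=2$ in these formulas gives $\lambda=1$, which is precisely where your version degenerates.

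\textbf{Steps 1 and 3 also fail to give the required form $K\eta^{-\kappa}p^{2\kappa}$.} With the lower norm $L^{\frac{2}{p+1}}$, the Gagliardo--Nirenberg exponent is $\theta=\frac{np}{np+2}$. This contradicts your claim that $\theta$ depends only on $n$ and $p_*$. It gives $\frac{\theta}{1-\theta}=\frac{np}{2}$, hence $\delta^{-\frac{\theta}{1-\theta}}\sim(p^2/\eta)^{\frac{np}{2}}$, and the Gagliardo--Nirenberg constant itself also becomes $p$-dependent.

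In Step 3 you have $(\int_\Omega f)^p\le|\Omega|^{p-2}\{\int_\Omega f^{p/2}\}^2$. The exponent of $|\Omega|$ is multiplied by $p$, so this factor grows exponentially in $p$ when $|\Omega|>1$; it does not stay bounded. Either defect would destroy the subsequent Moser recursion with $p_k=2^kn$.

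Both defects are cured by a $p$-independent lower norm, as in the paper. With $L^{2/3}$, H\"older gives $\{\int_\Omega f^{\frac{p+1}{3}}g^{\frac13}\}^3\le\{\int_\Omega f^{p/2}\}^2\int_\Omega fg$ with constant $1$. Then $\theta$ is fixed, and $\delta^{-1}\lesssim p^2/\eta$ yields $\delta^{-\frac{\theta}{1-\theta}}\lesssim\eta^{-\kappa}p^{2\kappa}$ with $\kappa=\frac{\theta}{1-\theta}$.
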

\begin{proof}
    Let \[ q= \frac{2p_*(n+1)}{(n+3)p_*+2},\] we find that 
    \[
    \frac{2n}{n+2}<q<\frac{2n+2}{n+2}
    \]
    because $p_*>n$. In light of the Gagliardo-Nirenberg interpolation inequality, there exists $c_1>0$ such that 
    \begin{align*}
        \left \| \varphi \right \|^2_{L^2(\Omega)} \leq c_1 \left \| \nabla \varphi \right \|^{2\theta}_{L^q(\Omega)}\left \| \varphi \right \|^{2(1-\theta)}_{L^{\frac{2}{3}}(\Omega)} +c_1 \left \| \varphi \right \|^{2}_{L^{\frac{2}{3}}(\Omega)} \qquad \text{for all } \varphi \in C^1(\Bar{\Omega})
    \end{align*}
    where \begin{align*}
        \theta = \frac{2nq}{3nq+2q-2n} \in (0,1).
    \end{align*}
    Making use of this and applying Young's inequality, we obtain  
    \begin{align} \label{Moser.1}
        \int_\Omega f^{p+1}g &= \left \| f^\frac{p+1}{2} g^\frac{1}{2} \right \|_{L^2(\Omega)}^2 \notag \\
        &\leq c_1 \left \| \nabla \left ( f^\frac{p+1}{2} g^\frac{1}{2} \right ) \right \|^{2\theta}_{L^q(\Omega)}\left \|  f^\frac{p+1}{2} g^\frac{1}{2} \right \|^{2(1-\theta)}_{L^{\frac{2}{3}}(\Omega)} +c_1 \left \|  f^\frac{p+1}{2} g^\frac{1}{2} \right \|^{2}_{L^{\frac{2}{3}}(\Omega)} \notag \\
        &\leq \delta \left \| \nabla \left ( f^\frac{p+1}{2} g^\frac{1}{2} \right ) \right \|^{2}_{L^q(\Omega)} + \left ( c_1^{\frac{1}{1-\theta}} \delta^{- \frac{\theta}{1-\theta}} +c_1 \right)\left \|  f^\frac{p+1}{2} g^\frac{1}{2} \right \|^{2}_{L^{\frac{2}{3}}(\Omega)}.
    \end{align}
    By applying several elementary inequalities, we deduce that 
    \begin{align} \label{Moser.2}
         \delta \left \| \nabla \left ( f^\frac{p+1}{2} g^\frac{1}{2} \right ) \right \|^{2}_{L^q(\Omega)} &= \delta \left \| \frac{p+1}{2}f^{\frac{p-1}{2}}g^\frac{1}{2} \nabla f + \frac{1}{2} f^{\frac{p+1}{2}} g^{-\frac{1}{2}} \nabla g \right \|^{2}_{L^q(\Omega)} \notag \\
         &\leq \delta \cdot \left \{ \frac{p+1}{2} \left  \| f^{\frac{p-1}{2}}g^\frac{1}{2} \nabla f  \right \|_{L^q(\Omega)} + \frac{1}{2} \left  \| f^{\frac{p+1}{2}} g^{-\frac{1}{2}} \nabla g  \right \|_{L^q(\Omega)} \right \}^2 \notag \\
         &\leq \frac{(p+1)^2 \delta}{2}\left  \| f^{\frac{p-1}{2}}g^\frac{1}{2} \nabla f  \right \|_{L^q(\Omega)}^2 + \frac{\delta}{2} \left  \| f^{\frac{p+1}{2}} g^{-\frac{1}{2}} \nabla g  \right \|_{L^q(\Omega)}^2.
    \end{align}
    We set 
        \begin{align} \label{Moser.2'}
        c_2 := \max \left \{ 1, |\Omega|^{\frac{2-q}{q}} \right \} \qquad \text{and } \delta := \min \left \{ \frac{\eta |\Omega|^{\frac{q-2}{q}} }{(p+1)^2}, \left ( \frac{\eta }{2 c_2^{n+1}} \right )^\frac{1}{n+1} \right \}.
        \end{align}
    Noting that $q<2$, we employ H\"older's inequality to infer that 
    \begin{align} \label{Moser.3}
         \frac{(p+1)^2 \delta}{2}\left  \| f^{\frac{p-1}{2}}g^\frac{1}{2} \nabla f  \right \|_{L^q(\Omega)}^2 &\leq  \frac{(p+1)^2 \delta}{2} \cdot |\Omega|^{\frac{2-q}{q}}\left  \| f^{\frac{p-1}{2}}g^\frac{1}{2} \nabla f  \right \|_{L^2(\Omega)}^2  \notag \\
         &\leq  \frac{\eta }{2} \int_\Omega f^{p-1}g |\nabla f|^2,
    \end{align}
    according to the first restriction on $\delta$ contained in \eqref{Moser.2'}. We continue to apply H\"older's inequality to obtain that  
    \begin{align} \label{Moser.4}
        \frac{\delta}{2} \left  \| f^{\frac{p+1}{2}} g^{-\frac{1}{2}} \nabla g  \right \|_{L^q(\Omega)}^2 &= \frac{\delta}{2} \cdot \left \{ \int_\Omega f^{\frac{(p+1)q}{2}} g^{-\frac{q}{2}} |\nabla g|^q \right \}^{\frac{2}{q}} \notag \\
        &= \frac{\delta}{2} \cdot \left \{ \int_\Omega \left ( \frac{|\nabla g|^{2n+2}}{g^{2n+1}} \right )^\frac{q}{2n+2} \cdot f^{\frac{(p+1)q}{2}} g^{\frac{nq}{2n+2}} \right \}^\frac{2}{q} \notag \\
        &\leq \frac{\delta}{2} \cdot \left \{ \int_\Omega \frac{|\nabla g|^{2n+2}}{g^{2n+1}}  \right \}^\frac{1}{n+1} \cdot \left \{ \int_\Omega f^{\frac{(n+1)(p+1)q}{2n+2-q}}g^{\frac{nq}{2n+2-q}} \right \}^\frac{2n+2-q}{(n+1)q}.
    \end{align}
    Since $q<\frac{2n+2}{n+2}$, we find that
    \begin{align*}
        \lambda := \frac{qn}{2n+2-q} \in (0,1)
    \end{align*}
   and
    \begin{align*}
        r := \frac{(p+1)q}{2n+2-qn-q} < p+1.
    \end{align*}
    One more application of H\"older's inequality, we have
    \begin{align} \label{Moser.5}
        \left \{ \int_\Omega  f^{\frac{(n+1)(p+1)q}{2n+2-q}}g^{\frac{nq}{2n+2-q}} \right \}^\frac{2n+2-q}{(n+1)q} &= \left \{ \int_\Omega  \left ( f^{p+1}g \right )^\lambda \cdot f^{r(1-\lambda)} \right \}^\frac{2n+2-q}{(n+1)q} \notag \\
        &\leq \left \{ \int_\Omega f^{p+1}g  \right \}^{\frac{(2n+2-q)\lambda}{(n+1)q}} \cdot \left \{ \int_\Omega f^r \right \}^{\frac{(2n+2-q)(1-\lambda)}{(n+1)q}}.
    \end{align}
    One can verify that if $p\geq p_*$ then 
    \begin{align*}
         q= \frac{2p_*(n+1)}{(n+3)p_*+2} \leq \frac{2p(n+1)}{(n+3)p+2},
    \end{align*}
    which further entails that 
    \begin{align*}
        r =\frac{(p+1)q}{2n+2-qn-q} \leq \frac{p}{2}.
    \end{align*}
    Hence, applying H\"older's inequality yields 
    \begin{align} \label{Moser.6}
        \left \{ \int_\Omega f^r \right \}^{\frac{(2n+2-q)(1-\lambda)}{(n+1)q}} &\leq |\Omega |^{\frac{p-2r}{p} \cdot \frac{(2n+2-q)(1-\lambda)}{(n+1)q}} \left \{ \int_\Omega f^\frac{p}{2} \right \}^{\frac{2r(2n+2-q)(1-\lambda)}{pq(n+1)}} \notag \\
        &\leq c_2  \left \{ \int_\Omega f^\frac{p}{2} \right \}^{\frac{2r(2n+2-q)(1-\lambda)}{pq(n+1)}},
    \end{align}
     since 
    \[
    \frac{(2n+2-q)(1-\lambda)}{(n+1)q} = \frac{2-q}{q}.
    \]
    Combining \eqref{Moser.4}, \eqref{Moser.5} and \eqref{Moser.6} and applying Young's inequality and noting that 
    \[
    \frac{(2n+2-q)\lambda}{(n+1)q} = \frac{n}{n+1}
    \]
    and 
    \[
    \frac{2r(2n+2-q)(1-\lambda)}{pq} = \frac{2p+2}{p},
    \]
     we deduce that 
    \begin{align}\label{Moser.7}
          \frac{\delta}{2} \left  \| f^{\frac{p+1}{2}} g^{-\frac{1}{2}} \nabla g  \right \|_{L^q(\Omega)}^2 &\leq \frac{c_2\delta}{2}\left \{ \int_\Omega \frac{|\nabla g|^{2n+2}}{g^{2n+1}}  \right \}^\frac{1}{n+1} \cdot  \left \{ \int_\Omega f^{p+1}g  \right \}^{\frac{(2n+2-q)\lambda}{(n+1)q}} \cdot \left \{ \int_\Omega f^\frac{p}{2} \right \}^{\frac{2r(2n+2-q)(1-\lambda)}{pq(n+1)}} \notag \\
          &\leq   \left \{ \frac{1}{2} \int_\Omega f^{p+1}g  \right \}^{\frac{n}{n+1}} \cdot c_2 \delta \cdot \left \{ \int_\Omega \frac{|\nabla g|^{2n+2}}{g^{2n+1}}  \right \}^\frac{1}{n+1} \cdot \left \{ \int_\Omega f^\frac{p}{2} \right \}^{\frac{2p+2}{p(n+1)}} \notag \\
          &=  \frac{1}{2}\int_\Omega f^{p+1}g + c_2^{n+1} \delta^{n+1}\left \{ \int_\Omega f^\frac{p}{2} \right \}^{\frac{2p+2}{p}} \cdot \int_\Omega \frac{|\nabla g|^{2n+2}}{g^{2n+1}} \notag \\
          &\leq  \frac{1}{2}\int_\Omega f^{p+1}g + \frac{\eta}{2}\left \{ \int_\Omega f^\frac{p}{2} \right \}^{\frac{2p+2}{p}} \cdot \int_\Omega \frac{|\nabla g|^{2n+2}}{g^{2n+1}},
    \end{align}
    where the last inequality holds due to the second restriction on $\delta$ contained in \eqref{Moser.2'}. By H\"older's inequality, we have
    \begin{align}\label{Moser.8}
        \left \|  f^\frac{p+1}{2} g^\frac{1}{2} \right \|^{2}_{L^{\frac{2}{3}}(\Omega)}&= \left \{ \int_\Omega f^{\frac{p+1}{3}}g^\frac{1}{3} \right \}^3 = \left \{ \int_\Omega (fg)^\frac{1}{3} \cdot f^\frac{p}{3} \right \}^3 \leq \left \{ \int_\Omega f^\frac{p}{2} \right \}^2 \cdot \int_\Omega fg.
    \end{align}
    Collecting \eqref{Moser.1}, \eqref{Moser.2}, \eqref{Moser.3}, \eqref{Moser.7} and \eqref{Moser.8}, we arrive at 
    \begin{align}\label{Moser.9}
        \int_\Omega f^{p+1}g &\leq \eta \int_\Omega f^{p-1}g |\nabla f|^2 + \eta\left \{ \int_\Omega f^\frac{p}{2} \right \}^{\frac{2p+2}{p}} \cdot \int_\Omega \frac{|\nabla g|^{2n+2}}{g^{2n+1}}  \notag \\
        &\quad+ 2\left ( c_1^\frac{1}{1-\theta} \delta^{-\frac{\theta}{1-\theta}}+c_1 \right )\left \{ \int_\Omega f^\frac{p}{2} \right \}^2 \cdot \int_\Omega fg.
    \end{align}
    Setting 
    \begin{align*}
        \kappa \equiv \kappa(p_*) := \frac{2nq}{nq+2q-2n} \qquad \text{and } K \equiv K(p_*) := 2 c_1^\frac{3nq+2q-2n}{nq+2q-2n}\max \left \{ \left ( 4|\Omega|^{\frac{2-q}{q}} \right )^\frac{2nq}{nq+2q-2n}, c_2^\frac{2nq}{nq+2q-2n} \right \}
        +2c_1,
    \end{align*}
    we find that 
    \begin{align*}
        \delta^{-1} &= \max \left \{ \frac{(p+1)^2|\Omega |^{\frac{2-q}{1}}}{\eta }, \left ( \frac{2c_2^{n+1}}{\eta } \right )^{\frac{1}{n+1}} \right \} \notag \\
        &\leq \max \left \{ \frac{4p^2|\Omega |^{\frac{2-q}{1}}}{\eta },  \frac{p^2c_2}{\eta }  \right \} \notag \\
        &\leq \eta^{-1}p^2 \max \left \{ 4|\Omega|^{\frac{2-q}{q}}, c_2 \right \},
    \end{align*}
    which entails that 
    \begin{align*}
        2c_1^\frac{1}{1-\theta} \delta^{-\frac{\theta}{1-\theta}} &\leq 2c_1^{\frac{2nq+2q-2n}{nq+2q-2n}} p^{\frac{4nq}{nq+2q-2n}} \eta^{-\frac{2nq}{nq+2q-2n}}\max \left \{ \left ( 4|\Omega|^{\frac{2-q}{q}} \right )^\frac{2nq}{nq+2q-2n}, c_2^\frac{2nq}{nq+2q-2n} \right \}.
    \end{align*}
    Moreover, we have
    \begin{align*}
        2c_1 \leq 2c_1 p^{\frac{4nq}{nq+2q-2n}} \eta^{-\frac{2nq}{nq+2q-2n}}
    \end{align*}
    The two above estimates lead to 
    \begin{align*}
        2\left ( c_1^\frac{1}{1-\theta} \delta^{-\frac{\theta}{1-\theta}}+c_1 \right ) \leq K \eta^{-\kappa}p^{2\kappa}, 
    \end{align*}
    which combines with \eqref{Moser.9} implies \eqref{Moser-1}. The proof is now complete.  
\end{proof}

In order to establish a uniform in time $L^\infty $ bound for $u_\varepsilon$, we will need the following elementary inequality established in \cite{Winkler-2024}[Lemma 6.3].

\begin{lemma} \label{MW}
    Let $a \geq 1, b \geq 1, d \geq 0$ and $(M_k)_{k \in \mathbb{N}} \subset [1, \infty)$ be such that 
    \begin{align}
        M_k \leq a^k M_{k-1}^{2+\frac{d}{2^k}} +b^{2^k} \qquad \text{for all }k \geq 1.
    \end{align}
    Then 
    \begin{align}
        \liminf_{k \to \infty} M_k^{\frac{1}{2^k}} \leq \left ( 2\sqrt{2} a^3b^{1+\frac{d}{2}}M_0 \right)^{e^\frac{d}{2}}.
    \end{align}
\end{lemma}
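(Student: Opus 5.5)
The plan is to reduce the two-term recursion to a one-term multiplicative recursion and then take logarithms. The only real difficulty is the inhomogeneous term $b^{2^k}$. If I simply bounded $M_k\le 2a^kb^{2^k}M_{k-1}^{2+d/2^k}$, the factor $b^{2^k}$ would contribute $\ln b$ at every step after normalizing by $2^k$, and the resulting bound would diverge. To avoid this I will absorb $b^{2^k}$ into the sequence itself: since $b^{2^k}=(b^{2^{k-1}})^2$, it is controlled by the square of the previous step's bound.

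\emph{Step 1.} Define
\[
N_k:=\max\{M_k,\,b^{2^k}\}\ge 1 \qquad\text{for } k\ge 0 .
\]
For $k\ge1$ I will check two estimates.
\begin{itemize}
\item Since $N_{k-1}\ge M_{k-1}\ge 1$, we have $a^kM_{k-1}^{2+d/2^k}\le a^kN_{k-1}^{2+d/2^k}$.
\item Since $a\ge1$ and $N_{k-1}\ge1$, we have $b^{2^k}=(b^{2^{k-1}})^2\le N_{k-1}^2\le a^kN_{k-1}^{2+d/2^k}$.
\end{itemize}
Feeding both into the hypothesis gives $M_k\le 2a^kN_{k-1}^{2+d/2^k}$. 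The second bullet also bounds $b^{2^k}$ by the same quantity, so
\[
N_k\le 2a^kN_{k-1}^{2+d/2^k}\qquad\text{for all } k\ge1 .
\]

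\emph{Step 2.} Set $z_k:=2^{-k}\ln N_k\ge 0$. Taking logarithms and dividing by $2^k$ gives
\[
z_k\le \Big(1+\frac{d}{2^{k+1}}\Big)z_{k-1}+2^{-k}\big(\ln 2+k\ln a\big).
\]
Iterating this, and using that all terms are nonnegative and all factors are at least $1$, I obtain
\[
z_k\le \prod_{j=1}^{\infty}\Big(1+\frac{d}{2^{j+1}}\Big)\cdot\Big(z_0+\sum_{j=1}^\infty 2^{-j}(\ln2+j\ln a)\Big).
\]
The product is at most $\exp\big(\sum_{j\ge1}d\,2^{-j-1}\big)=e^{d/2}$. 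From $\sum_{j\ge1}2^{-j}=1$ and $\sum_{j\ge1}j2^{-j}=2$, the series equals $\ln 2+2\ln a$. Finally $z_0=\ln\max\{M_0,b\}\le\ln(bM_0)$, because both quantities are at least $1$.

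\emph{Step 3.} Combining these bounds,
\[
M_k^{1/2^k}\le N_k^{1/2^k}=e^{z_k}\le\big(2a^2bM_0\big)^{e^{d/2}}\qquad\text{for every } k .
\]
Since $a,b\ge1$ and $2\le 2\sqrt2$, the right-hand side is at most $\big(2\sqrt2\,a^3b^{1+d/2}M_0\big)^{e^{d/2}}$. This bounds the $\limsup$, and hence the $\liminf$, as claimed.

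The main obstacle is Step 1, namely choosing the comparison sequence $N_k$ so that $b^{2^k}$ is dominated by the square of the previous step. Steps 2 and 3 are routine bookkeeping with geometric series.
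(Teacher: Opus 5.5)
Your proof is correct and self-contained. The paper gives no proof of this lemma; it imports the statement from \cite{Winkler-2024} (Lemma~6.3), so your argument supplies something the paper does not contain.

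The decisive idea is the comparison sequence $N_k=\max\{M_k,b^{2^k}\}$. Because $b^{2^k}=(b^{2^{k-1}})^2\le N_{k-1}^2\le a^kN_{k-1}^{2+d/2^k}$, the inhomogeneous term is absorbed by the square of the previous step. The two-term recursion therefore collapses to the purely multiplicative $N_k\le 2a^kN_{k-1}^{2+d/2^k}$, and no case distinction according to which term on the right dominates is needed.

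The remaining bookkeeping is right:
\begin{itemize}
\item the recursion $z_k\le(1+d2^{-k-1})z_{k-1}+2^{-k}(\ln 2+k\ln a)$ holds;
\item iteration is legitimate because all factors are at least $1$ and all terms are nonnegative;
\item $\prod_{j\ge1}(1+d2^{-j-1})\le e^{d/2}$ and $\sum_{j\ge1}2^{-j}(\ln 2+j\ln a)=\ln 2+2\ln a$;
\item $\ln\max\{M_0,b\}\le\ln(bM_0)$.
\end{itemize}

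Your route actually proves more than is stated: $\sup_{k\ge0}M_k^{1/2^k}\le(2a^2bM_0)^{e^{d/2}}$. This bounds every term, hence the $\limsup$ and not only the $\liminf$, and the constant is smaller, since the factors $\sqrt2$, one power of $a$, and $b^{d/2}$ in the stated constant are unnecessary.

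The citation buys the paper only brevity. In its application in Lemma~\ref{Linf} the weaker $\liminf$ form suffices, because $\|u_\varepsilon(\cdot,t)\|^n_{L^{2^kn}(\Omega)}\to\|u_\varepsilon(\cdot,t)\|^n_{L^\infty(\Omega)}$. Your version would apply there verbatim; note that the paper invokes the lemma with the roles of $a$ and $b$ interchanged, which is harmless after relabeling.
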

A useful tool for dealing with the asymptotic stability of positive bounded solutions is provided by the following lemma, the proof of which can be found in Lemma 3.1 of \cite{XM}.
\begin{lemma}\label{Lunif}
    Suppose that $t_0\geq 0$ and $f: (t_0, \infty) \to [0, \infty)$ is a uniformly continuous function such that $\int_{t_0} ^\infty f(t)\, dt <\infty$. Then $f(t) \to 0$ as $t \to \infty$.
\end{lemma}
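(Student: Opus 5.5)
The plan is to argue by contradiction, following the standard route cited from \cite{XM}. Suppose $f(t)\not\to 0$ as $t\to\infty$. Then there are $\varepsilon_0>0$ and a sequence $t_k\to\infty$, with $t_k>t_0$, such that $f(t_k)\ge \varepsilon_0$ for all $k$. After passing to a subsequence we may assume the gaps are large, say $t_{k+1}-t_k\ge 1$ for all $k$.

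The first step uses uniform continuity of $f$ on $(t_0,\infty)$. It gives $\delta\in(0,\tfrac12)$ such that $|f(t)-f(s)|<\varepsilon_0/2$ whenever $s,t>t_0$ and $|t-s|<\delta$. Discarding finitely many indices, we may also assume $t_k-\delta>t_0$. Then $f(t)\ge \varepsilon_0/2$ on each interval $(t_k-\delta,t_k+\delta)$.

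The second step uses that these intervals are pairwise disjoint, since the gaps are at least $1$ and $2\delta<1$. Because $f\ge 0$,
\begin{align*}
\int_{t_0}^\infty f(t)\,dt \;\ge\; \sum_{k=1}^N \int_{t_k-\delta}^{t_k+\delta} f(t)\,dt \;\ge\; N\cdot \delta\varepsilon_0
\end{align*}
for every $N$. Letting $N\to\infty$ contradicts $\int_{t_0}^\infty f<\infty$, so $f(t)\to 0$.

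There is no real obstacle here. The only points needing care are that the chosen intervals are disjoint and lie inside the domain $(t_0,\infty)$; both are handled by taking a subsequence and discarding finitely many initial indices.
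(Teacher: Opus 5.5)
Your proof is correct: taking gaps $t_{k+1}-t_k\ge 1$ and $\delta<\tfrac12$ makes the intervals $(t_k-\delta,t_k+\delta)$ disjoint, each one contributes at least $\delta\varepsilon_0$ to the integral, and this contradicts $\int_{t_0}^\infty f<\infty$. The paper gives no proof of its own and simply cites Lemma 3.1 of the reference it labels \texttt{XM}; your argument is the standard contradiction proof of this fact. That proof is sometimes phrased as saying $\int_{t_k}^{t_k+\delta} f\to 0$ by integrability instead of summing over disjoint intervals, which is equivalent.
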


\section{A Priori Estimates} \label{S3}

In this section, we will establish several useful estimates for solutions of the system \eqref{2} which will be later applied in the sequel sections. Let us begin with the following basic estimate for $v_\varepsilon$.

\begin{lemma}\label{local-5'}
  Let $\Omega \subset \mathbb{R}^n$ with $n \geq 3$ be a bounded domain with smooth boundary and let $\alpha \geq 1$. There exists $C>0$ such that 
  \begin{align*}
     \int_0^{T_{\rm max, \varepsilon}} \int_\Omega v_\varepsilon |\nabla v_\varepsilon|^2 \leq C \qquad \text{for all }\varepsilon \in (0,1).
  \end{align*}
\end{lemma}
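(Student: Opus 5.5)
Test the second equation of \eqref{2} against $v_\varepsilon^2$. Writing $v=v_\varepsilon$, $u=u_\varepsilon$, and using the Neumann condition $\partial_\nu v=0$, integration by parts gives
\begin{align*}
\frac{1}{3}\frac{d}{dt}\int_\Omega v^3 = \int_\Omega v^2 \Delta v - \int_\Omega u v^3 = -2\int_\Omega v|\nabla v|^2 - \int_\Omega u v^3 \qquad \text{for all } t\in(0,T_{\rm max,\varepsilon}).
\end{align*}
Since $u>0$ and $v>0$ by Lemma \ref{local}, the last term is nonpositive and can be dropped.

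Integrating over $(0,T)$ for any $T<T_{\rm max,\varepsilon}$ then yields
\begin{align*}
2\int_0^T\int_\Omega v|\nabla v|^2 \leq \frac{1}{3}\int_\Omega v_0^3 - \frac13\int_\Omega v^3(\cdot,T) \leq \frac{1}{3}\|v_0\|_{L^\infty(\Omega)}^3|\Omega|.
\end{align*}
The right-hand side depends only on $v_0$ and $\Omega$, not on $\varepsilon$ or $T$. Letting $T\nearrow T_{\rm max,\varepsilon}$ by monotone convergence therefore gives the claim with $C=\frac16\|v_0\|_{L^\infty(\Omega)}^3|\Omega|$. The initial datum for $v_\varepsilon$ is $v_0$ itself, independent of $\varepsilon$, and \eqref{local-4} with $t_0=0$ also gives $\|v_\varepsilon\|_{L^\infty}\le\|v_0\|_{L^\infty}$ if it is needed.

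This is essentially routine, and I expect no real obstacle. The only care needed concerns regularity up to $t=0$ and at the boundary when integrating by parts. Since $v_\varepsilon\in C^{2,1}(\Omega\times(0,T_{\rm max,\varepsilon}))\cap C^0(\bar\Omega\times[0,T_{\rm max,\varepsilon}))$, I would integrate over $(t_0,T)$ with $t_0>0$ and let $t_0\searrow0$, using the continuity of $\int_\Omega v^3$ at $t=0$. The boundary regularity needed for integration by parts follows from standard parabolic theory as in \cite{Winkler2022NA}.
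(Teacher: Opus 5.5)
Your proposal is correct and follows the paper's proof exactly. The paper also tests the second equation against $v_\varepsilon^2$, drops the nonpositive term $-\int_\Omega u_\varepsilon v_\varepsilon^3$, and integrates in time to get the $\varepsilon$-independent bound $\frac{1}{3}\int_\Omega v_0^3$; your extra remarks on regularity near $t=0$ and at the boundary are harmless refinements.
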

\begin{proof}
    Multiplying the second equation of \eqref{2} by $v_\varepsilon^2$ and integrating by parts yields
    \begin{align*}
       \frac{1}{3} \frac{d}{dt}\int_\Omega v_\varepsilon^3 +2 \int_\Omega v_\varepsilon |\nabla v_\varepsilon|^2 &= - \int_\Omega u_\varepsilon v_\varepsilon^3 \notag \\
       &\leq 0 \qquad \text{for all }t\in (0,T_{\rm max, \varepsilon}) \quad \text{and }\varepsilon \in (0,1).
    \end{align*}
    Integrating this over time and noting that $v_\varepsilon>0$, we obtain 
    \begin{align*}
        2 \int_0^{T_{\rm max, \varepsilon}} \int_\Omega v_\varepsilon |\nabla v_\varepsilon|^2 \leq \frac{1}{3} \int_\Omega v_0^3 \qquad \text{for all }\varepsilon \in (0,1),
    \end{align*}
    which completes the proof.
\end{proof}

Let us now derive the following two basic inequalities allowing us to control the time evolution of certain singularly weighted integrals involving the signal gradient.

\begin{lemma} \label{W}
    Let $\Omega \subset \mathbb{R}^n$ with $n \geq 3$ be a bounded convex domain with smooth boundary and $q\geq 2$. Then there exist $C_1(q)>0$, $C_2(q)>0$, $C_3>0$ and $C_4>0$ such that 
    \begin{align}\label{W-1}
        \frac{d}{dt}\int_\Omega \frac{|\nabla v_\varepsilon|^q}{v_\varepsilon^{q-1}} +C_1(q) \int_\Omega \frac{|\nabla v_\varepsilon|^{q+2}}{v_\varepsilon^{q+1}} + C_1(q)\int_\Omega u_\varepsilon \frac{|\nabla v_\varepsilon|^{q}}{v_\varepsilon^{q-1}} \leq C_2(q) \int_\Omega u_\varepsilon^{\frac{q+2}{2}}v_\varepsilon
    \end{align}
    and
     \begin{align} \label{W-2}
        \frac{d}{dt}\int_\Omega \frac{|\nabla v_\varepsilon|^4}{v_\varepsilon^{3}} + C_3\int_\Omega \frac{|\nabla v_\varepsilon|^{6}}{v_\varepsilon^{5}} + C_3\int_\Omega u_\varepsilon \frac{|\nabla v_\varepsilon|^{4}}{v_\varepsilon^{3}} \leq C_4 \int_\Omega  v_\varepsilon|\nabla u_\varepsilon|^{2} 
    \end{align}
    for all $t \in (0,T_{\rm max, \varepsilon})$ and $\varepsilon \in (0,1)$.
\end{lemma}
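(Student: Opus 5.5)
We would prove both inequalities by the standard computation for the weighted functional $\int_\Omega |\nabla v|^q v^{1-q}$, which is cleanest in the substitution $v=\varphi^2$ or, more conveniently, via the pointwise identities of Winkler for $\frac{|\nabla v|^q}{v^{q-1}}$. We write $v=v_\varepsilon$, $u=u_\varepsilon$. Differentiating in time gives
\begin{align*}
\frac{d}{dt}\int_\Omega \frac{|\nabla v|^q}{v^{q-1}} = q\int_\Omega \frac{|\nabla v|^{q-2}}{v^{q-1}}\nabla v\cdot\nabla(\Delta v - uv) - (q-1)\int_\Omega \frac{|\nabla v|^q}{v^q}(\Delta v-uv).
\end{align*}
The next step is to sort out the diffusion part. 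We use $\nabla v\cdot\nabla\Delta v=\frac12\Delta|\nabla v|^2-|D^2v|^2$ and integrate by parts. On the boundary, convexity of $\Omega$ gives $\partial_\nu|\nabla v|^2\le 0$, so the boundary integral has the right sign and can be dropped. We then expect, as in the known lemmas for $q=4$ (Winkler) and general $q$, an estimate
\begin{align*}
\int_\Omega \frac{|\nabla v|^{q+2}}{v^{q+1}} \le c\int_\Omega v^{1-q}|\nabla v|^{q-2}|D^2 \ln v|^2 v^2 .
\end{align*}
The diffusion contribution is then bounded above by $-c_1\int_\Omega v^{-q+1}|\nabla v|^{q-2}|D^2v - \frac{\nabla v\otimes\nabla v}{v}|^2 - c_1\int_\Omega \frac{|\nabla v|^{q+2}}{v^{q+1}}$. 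We expect this step to be the main obstacle: one must check that the quadratic form in $(D^2v,\nabla v\otimes\nabla v/v)$ produced by the computation is negative definite for every $q\ge2$. We would follow the known algebra (e.g.\ Lemma 3.3 in Winkler's CVPDE paper, or its extension in \cite{Li2022}), which handles the cross terms $\frac{|\nabla v|^{q-2}}{v^{q}}\nabla v\cdot D^2v\nabla v$ and $\frac{|\nabla v|^{q}\Delta v}{v^q}$ by integrating by parts and applying Young's inequality. The coefficient constraints hold for $q\ge2$.

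Next we treat the absorption terms. The $-uv$ part contributes
\begin{align*}
-q\int_\Omega \frac{|\nabla v|^{q-2}}{v^{q-1}}\nabla v\cdot(v\nabla u+u\nabla v) + (q-1)\int_\Omega u\frac{|\nabla v|^q}{v^{q-1}},
\end{align*}
whose $u$-parts combine to $-\int_\Omega u\frac{|\nabla v|^q}{v^{q-1}}$, the good term. This leaves $-q\int_\Omega \frac{|\nabla v|^{q-2}}{v^{q-2}}\nabla v\cdot\nabla u$, which we handle in two ways.

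For \eqref{W-1} we integrate by parts once more, using $\partial_\nu v=0$, to move the derivative off $u$. This gives $q\int_\Omega u\,\nabla\cdot(v^{2-q}|\nabla v|^{q-2}\nabla v)$, which is bounded by $c\int_\Omega u v^{2-q}|\nabla v|^{q-2}|D^2v| + c\int_\Omega u v^{1-q}|\nabla v|^q$. We apply Young's inequality against $\eta\int_\Omega v^{1-q}|\nabla v|^{q-2}|D^2v-\nabla v\otimes\nabla v/v|^2$ and $\eta\int_\Omega \frac{|\nabla v|^{q+2}}{v^{q+1}}$. The remainders are of the form $\int_\Omega u^2 v^{3-q}|\nabla v|^{q-2}$ and similar; Young with exponents $\frac{q+2}{q-2}$ and $\frac{q+2}{4}$ then yields $\eta\int_\Omega \frac{|\nabla v|^{q+2}}{v^{q+1}} + C\int_\Omega u^{\frac{q+2}{2}}v$. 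The powers of $v$ match exactly, since $(3-q)\frac{q+2}{4} - (q+1)\cdot(-1)\frac{q-2}{4}\cdot$ adjusts to $v^1$, which is the homogeneity check.

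For \eqref{W-2}, with $q=4$, we instead keep $\nabla u$. Young's inequality gives
\begin{align*}
4\int_\Omega \frac{|\nabla v|^3}{v^2}|\nabla u| \le \eta\int_\Omega \frac{|\nabla v|^6}{v^5} + C_\eta\int_\Omega v|\nabla u|^2,
\end{align*}
using $\frac{|\nabla v|^3}{v^2}|\nabla u| = \frac{|\nabla v|^3}{v^{5/2}}\cdot v^{1/2}|\nabla u|$. Choosing $\eta$ small and absorbing then yields both inequalities with constants $C_1(q),C_2(q),C_3,C_4$ independent of $\varepsilon$.
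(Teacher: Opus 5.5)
Your proposal is correct and follows essentially the paper's route, with more of the computation written out. The paper quotes Lemma 2.3 of \cite{Wu2024} for differential inequalities that already contain the dissipation term $\int_\Omega \frac{|\nabla v_\varepsilon|^{q-2}}{v_\varepsilon^{q-3}}|D^2\ln v_\varepsilon|^2$ (identical to your $\int_\Omega v_\varepsilon^{1-q}|\nabla v_\varepsilon|^{q-2}|D^2v_\varepsilon-\frac{\nabla v_\varepsilon\otimes\nabla v_\varepsilon}{v_\varepsilon}|^2$), with the $u$-dependent terms already absorbed into $\int_\Omega u_\varepsilon^{\frac{q+2}{2}}v_\varepsilon$ and $\int_\Omega v_\varepsilon|\nabla u_\varepsilon|^2$ respectively. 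It then applies Lemma 3.4 of \cite{WinklerDCDSB}, $\int_\Omega \frac{|\nabla v_\varepsilon|^{q+2}}{v_\varepsilon^{q+1}}\le (q+\sqrt{n})^2\int_\Omega \frac{|\nabla v_\varepsilon|^{q-2}}{v_\varepsilon^{q-3}}|D^2\ln v_\varepsilon|^2$, which is exactly the estimate you say you expect.

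The step you flag as the main obstacle is in fact painless. With $w=\ln v_\varepsilon$, the diffusion part of $\frac{d}{dt}\int_\Omega v_\varepsilon|\nabla w|^q$ equals $-q\int_\Omega v_\varepsilon|\nabla w|^{q-2}|D^2w|^2-\frac{q(q-2)}{4}\int_\Omega v_\varepsilon|\nabla w|^{q-4}\bigl|\nabla|\nabla w|^2\bigr|^2+\frac{q}{2}\int_{\partial\Omega} v_\varepsilon|\nabla w|^{q-2}\partial_\nu|\nabla w|^2$. The cross terms cancel exactly and the boundary integral is nonpositive by convexity, so no quadratic-form analysis is needed for any $q\ge 2$.
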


\begin{proof}
    From Lemma 2.3 in \cite{Wu2024}, there exist $c_1(q)>0$  and $c_2>0$ such that for all $t\in (0,T_{\rm max, \varepsilon})$ and $\varepsilon \in (0,1)$,
    \begin{align} \label{W.1}
        \frac{d}{dt}\int_\Omega \frac{|\nabla v_\varepsilon|^q}{v_\varepsilon^{q-1}} +\frac{q}{2} \int_\Omega \frac{|\nabla v_\varepsilon|^{q-2}}{v_\varepsilon^{q-3}}|D^2 \ln v_\varepsilon|^2 + (q-1)^2\int_\Omega u_\varepsilon \frac{|\nabla v_\varepsilon|^{q}}{v_\varepsilon^{q-1}} \leq c_1(q) \int_\Omega u_\varepsilon^{\frac{q+2}{2}}v_\varepsilon
    \end{align}
    and
     \begin{align} \label{W.2}
        \frac{d}{dt}\int_\Omega \frac{|\nabla v_\varepsilon|^4}{v_\varepsilon^{3}} + \int_\Omega \frac{|\nabla v_\varepsilon|^{2}}{v_\varepsilon}|D^2 \ln v_\varepsilon|^2 + \int_\Omega u_\varepsilon \frac{|\nabla v_\varepsilon|^{4}}{v_\varepsilon^{3}} \leq c_2 \int_\Omega  v_\varepsilon|\nabla u_\varepsilon|^{2}. 
    \end{align}
   Applying Lemma 3.4 in \cite{WinklerDCDSB}, we have
    \begin{align*}
        \int_\Omega \frac{|\nabla v_\varepsilon|^{q+2}}{v_\varepsilon^{q+1}} \leq (q+\sqrt{n})^2  \int_\Omega \frac{|\nabla v_\varepsilon|^{q-2}}{v_\varepsilon^{q-3}}|D^2 \ln v_\varepsilon|^2.
    \end{align*}
   This, together with \eqref{W.1} and \eqref{W.2} implies \eqref{W-1} and \eqref{W-2} with $C_1(q) = \min \left \{ \frac{q}{2(q+\sqrt{n})^2}, (q-1)^2 \right \}$, $C_2(q)=c_1(q)$, $C_3= \frac{1}{(4+\sqrt{n})^2}$ and $C_4=c_2$. The proof is now complete.
\end{proof}

Before proving the most important result of this section---namely, the spatio-temporal estimates for $v_\varepsilon |\nabla u_\varepsilon|^2$ and $u_\varepsilon \frac{|\nabla v_\varepsilon|^4}{v_\varepsilon^3}$---we first establish the following auxiliary estimate.

\begin{lemma} \label{L2}
     Let $\Omega \subset \mathbb{R}^n$ with $n \geq 3$ be a bounded convex domain with smooth boundary and let $\alpha \in [1,2]$. There exists $a>0$ and $b>0$ such that for all $t \in (0,T_{\rm max, \varepsilon})$ and for each $\varepsilon \in (0,1)$, the functional 
    \begin{align*}
        F_\varepsilon(t)= \int_\Omega u_\varepsilon (\cdot,t) \ln u_\varepsilon (\cdot,t) +a \int_\Omega \frac{|\nabla v_\varepsilon(\cdot,t)|^4}{v_\varepsilon(\cdot,t)} 
    \end{align*}
    satisfies 
    \begin{align}\label{L2-1}
        F'_\varepsilon(t)&+b \int_\Omega \frac{|\nabla v_\varepsilon|^6}{v_\varepsilon^5} +b \int_\Omega u_\varepsilon \frac{|\nabla v_\varepsilon|^4}{v_\varepsilon^3} +\frac{1}{2}\int_\Omega v_\varepsilon |\nabla u_\varepsilon|^2 \notag\\
        &\leq \chi^2 \int_\Omega u_\varepsilon^{2\alpha-2} v_\varepsilon |\nabla v_\varepsilon|^2+ \ell \int_\Omega u_\varepsilon v_\varepsilon \ln u_\varepsilon + \ell \int_\Omega u_\varepsilon v_\varepsilon .
    \end{align}
\end{lemma}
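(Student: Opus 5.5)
Note: the functional in the statement is evidently meant to carry $v_\varepsilon^3$ in the denominator, matching the introduction; I work with $\int_\Omega |\nabla v_\varepsilon|^4/v_\varepsilon^3$ throughout, since that is the quantity controlled by \eqref{W-2}.

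The plan is to differentiate the two parts of $F_\varepsilon$ separately and combine them. The entropy part is handled by a direct computation, the weighted gradient part by \eqref{W-2} of Lemma \ref{W}, and then $a$ is chosen small enough to absorb the right-hand side of \eqref{W-2}.

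\textbf{Entropy part.} Testing the first equation of \eqref{2} by $\ln u_\varepsilon+1$ and integrating by parts, with the Neumann conditions, gives
\begin{align*}
\frac{d}{dt}\int_\Omega u_\varepsilon\ln u_\varepsilon
&= -\int_\Omega v_\varepsilon|\nabla u_\varepsilon|^2 + \chi\int_\Omega u_\varepsilon^{\alpha-1}v_\varepsilon\nabla u_\varepsilon\cdot\nabla v_\varepsilon \\
&\quad + \ell\int_\Omega u_\varepsilon v_\varepsilon\ln u_\varepsilon + \ell\int_\Omega u_\varepsilon v_\varepsilon .
\end{align*}
Here I used $u_\varepsilon v_\varepsilon\nabla u_\varepsilon\cdot\frac{\nabla u_\varepsilon}{u_\varepsilon}=v_\varepsilon|\nabla u_\varepsilon|^2$ and $u_\varepsilon^\alpha v_\varepsilon\nabla v_\varepsilon\cdot\frac{\nabla u_\varepsilon}{u_\varepsilon}=u_\varepsilon^{\alpha-1}v_\varepsilon\nabla u_\varepsilon\cdot\nabla v_\varepsilon$. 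Young's inequality bounds the cross term by
\[
\frac14\int_\Omega v_\varepsilon|\nabla u_\varepsilon|^2+\chi^2\int_\Omega u_\varepsilon^{2\alpha-2}v_\varepsilon|\nabla v_\varepsilon|^2 .
\]
This leaves $-\frac34\int_\Omega v_\varepsilon|\nabla u_\varepsilon|^2$ on the right, and the $\chi^2$-term is exactly the first term on the right of \eqref{L2-1}.

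\textbf{Gradient part and choice of constants.} Multiply \eqref{W-2} by $a>0$ and add it to the entropy identity. The resulting extra term $aC_4\int_\Omega v_\varepsilon|\nabla u_\varepsilon|^2$ is absorbed once $a:=\frac{1}{4C_4}$, which leaves $\frac12\int_\Omega v_\varepsilon|\nabla u_\varepsilon|^2$ on the left. Setting $b:=aC_3$ then gives \eqref{L2-1} immediately. No restriction on $\alpha$ is used beyond making the computation legitimate; the hypothesis $\alpha\in[1,2]$ is presumably needed only later, when the $\chi^2$-term is estimated.

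\textbf{Expected obstacle.} The only delicate point is the regularity required to justify the testing. Since $u_\varepsilon>0$ and $v_\varepsilon>0$ on $\overline\Omega\times(0,T_{\rm max,\varepsilon})$ by Lemma \ref{local}, $\ln u_\varepsilon$ is smooth and the boundary terms vanish by the Neumann conditions. If Lemma \ref{W} is taken as given, the argument is routine bookkeeping of constants.
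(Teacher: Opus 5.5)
Your proposal is correct and follows the paper's proof: test the first equation by $\ln u_\varepsilon+1$, apply Young's inequality with weight $\frac14$, then add $a$ times \eqref{W-2} with $a=\frac{1}{4C_4}$ and $b=aC_3$. You are also right that the functional should carry $v_\varepsilon^3$ in the denominator, as the paper's reliance on \eqref{W-2} (and its introduction) confirms.
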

\begin{proof}
Testing the first equation of \eqref{2} by $\ln u_\varepsilon+1$ and applying Young's inequality yields
\begin{align} \label{L2.1}
    \frac{d}{dt} \int_\Omega u_\varepsilon \ln u_\varepsilon &=- \int_\Omega v_\varepsilon |\nabla u_\varepsilon|^2+ \chi \int_\Omega u_\varepsilon^{\alpha-1}v_\varepsilon \nabla u_\varepsilon \cdot \nabla v_\varepsilon  +\ell \int_\Omega u_\varepsilon (\ln u_\varepsilon+1) v_\varepsilon \notag \\
    &\leq -\frac{3}{4}\int_\Omega v_\varepsilon |\nabla u_\varepsilon|^2 +\chi^2 \int_\Omega u_\varepsilon^{2\alpha-2} v_\varepsilon |\nabla v_\varepsilon|^2+ \ell \int_\Omega u_\varepsilon v_\varepsilon \ln u_\varepsilon + \ell \int_\Omega u_\varepsilon v_\varepsilon 
\end{align}
for all $t\in (0,T_{\rm max, \varepsilon})$ and $\varepsilon \in (0,1)$. Using \eqref{W-2} in Lemma \ref{W}, we can find $c_1>0$ and $c_2>0$ such that  
 \begin{align} \label{L2.2}
        \frac{d}{dt}\int_\Omega \frac{|\nabla v_\varepsilon|^4}{v_\varepsilon^{3}} + c_1\int_\Omega \frac{|\nabla v_\varepsilon|^{6}}{v_\varepsilon^{5}} + c_1\int_\Omega u_\varepsilon \frac{|\nabla v_\varepsilon|^{4}}{v_\varepsilon^{3}} \leq c_2 \int_\Omega  v_\varepsilon|\nabla u_\varepsilon|^{2} 
    \end{align}
    for all $t\in (0,T_{\rm max, \varepsilon})$ and $\varepsilon \in (0,1)$. Letting $a= \frac{1}{4c_2}$ and $b= ac_1= \frac{c_1}{4c_2}$ and collecting \eqref{L2.1} and \eqref{L2.2}, we arrive at \eqref{L2-1}. The proof is now complete.
\end{proof}

We now turn to the derivation of the key estimates of this section. These bounds will be essential for establishing the $L^{p_0}$-estimate for $u_\varepsilon$, for some $p_0>\frac{n}{2}$, as stated in Lemma~\ref{L4}.
\begin{lemma} \label{L3}
 Let $\Omega \subset \mathbb{R}^n$ with $n \geq 3$ be a bounded convex domain with smooth boundary and assume $\alpha \in \left [1, \frac{3}{2}+\frac{1}{n}\right )$. There exists $C>0$ such that 
    \begin{align} \label{L3-1}
        \int_0 ^{T_{\rm max, \varepsilon}} \int_\Omega u_{\varepsilon} \frac{|\nabla v_{\varepsilon}|^4}{v_{\varepsilon}^3} \leq C \qquad \text{for all } \varepsilon \in (0,1)
    \end{align}
    and
    \begin{align} \label{L3-2}
         \int_0 ^{T_{\rm max, \varepsilon}} \int_\Omega v_{\varepsilon}|\nabla u_{\varepsilon}|^2 \leq C \qquad \text{for all } \varepsilon \in (0,1)
    \end{align}
    as well as
    \begin{align}\label{L3-3}
        \int_0 ^{T_{\rm max, \varepsilon}} \int_\Omega \frac{|\nabla v_\varepsilon|^6}{v_\varepsilon^5} \leq C \qquad \text{for all } \varepsilon \in (0,1).
    \end{align}
\end{lemma}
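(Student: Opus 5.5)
The plan is to integrate the differential inequality \eqref{L2-1} of Lemma \ref{L2} over $(0,T)$. First, the right-hand side must be absorbed into the dissipative terms on the left, up to quantities whose time integrals are already known to be bounded, namely $\int_0^{T_{\rm max,\varepsilon}}\int_\Omega u_\varepsilon v_\varepsilon\le \int_\Omega v_0$ from \eqref{local-5} and $\int_0^{T_{\rm max,\varepsilon}}\int_\Omega v_\varepsilon|\nabla v_\varepsilon|^2\le C$ from Lemma \ref{local-5'}. The tool for the absorption is Lemma \ref{LM} with $p=1$, $f=u_\varepsilon$, $g=v_\varepsilon$ and $M:=\int_\Omega u_0+\ell\int_\Omega v_0+|\Omega|$, which is admissible by \eqref{local-3}. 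It gives
\begin{align*}
\int_\Omega u_\varepsilon^{1+\frac4n}v_\varepsilon\le c_1\int_\Omega v_\varepsilon|\nabla u_\varepsilon|^2+c_1\int_\Omega u_\varepsilon\frac{|\nabla v_\varepsilon|^4}{v_\varepsilon^3}+c_1\int_\Omega u_\varepsilon v_\varepsilon .
\end{align*}
Hence any term bounded by $\delta\int_\Omega u_\varepsilon^{1+4/n}v_\varepsilon$ with $\delta c_1\le\frac14\min\{1,b\}$ can be absorbed.

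For the chemotactic term I use Young's inequality,
\begin{align*}
\chi^2 u_\varepsilon^{2\alpha-2}v_\varepsilon|\nabla v_\varepsilon|^2\le \frac b4\,u_\varepsilon\frac{|\nabla v_\varepsilon|^4}{v_\varepsilon^3}+c_2\,u_\varepsilon^{4\alpha-5}v_\varepsilon^5 .
\end{align*}
I then use $v_\varepsilon\le\|v_0\|_{L^\infty(\Omega)}$ from \eqref{local-4} to replace $v_\varepsilon^5$ by $c\,v_\varepsilon$. The hypothesis $\alpha<\frac32+\frac1n$ is exactly $4\alpha-5<1+\frac4n$. Therefore, when $4\alpha-5\ge1$, Young's inequality yields $u_\varepsilon^{4\alpha-5}v_\varepsilon\le\delta u_\varepsilon^{1+4/n}v_\varepsilon+c_\delta u_\varepsilon v_\varepsilon$, which is of the admissible form.

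The case $4\alpha-5<1$, that is $\alpha<\frac32$, is the delicate one, because $u_\varepsilon^{s}v_\varepsilon$ with $s<1$ is not controlled by $u_\varepsilon v_\varepsilon$ where $u_\varepsilon$ is small, and $\int_0^\infty\int_\Omega v_\varepsilon$ need not be finite. There I would not use the two-factor Young splitting. Instead I would interpolate $u_\varepsilon^{2\alpha-2}$ between $u_\varepsilon^0$ and $u_\varepsilon^{1/2}$ (or a slightly higher power), estimating
\begin{align*}
u_\varepsilon^{2\alpha-2}v_\varepsilon|\nabla v_\varepsilon|^2\le v_\varepsilon|\nabla v_\varepsilon|^2+u_\varepsilon^{1/2}v_\varepsilon|\nabla v_\varepsilon|^2 .
\end{align*}
The first term is time-integrable by Lemma \ref{local-5'}. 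For the second, writing $u_\varepsilon^{1/2}v_\varepsilon|\nabla v_\varepsilon|^2=\big(u_\varepsilon\frac{|\nabla v_\varepsilon|^4}{v_\varepsilon^3}\big)^{1/2}\cdot(v_\varepsilon^{5/2}|\nabla v_\varepsilon|^0)$ is not enough. I will therefore use a three-factor H\"older/Young split among $u_\varepsilon\frac{|\nabla v_\varepsilon|^4}{v_\varepsilon^3}$, $\frac{|\nabla v_\varepsilon|^6}{v_\varepsilon^5}$ and $v_\varepsilon|\nabla v_\varepsilon|^2$. The exponents can be matched because $\frac12$ is a convex combination of the $u$-powers $1,0,0$, and the powers of $|\nabla v_\varepsilon|$ and $v_\varepsilon$ are then adjusted using the boundedness of $v_\varepsilon$. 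This keeps everything as small multiples of the left-hand dissipation plus $c\,v_\varepsilon|\nabla v_\varepsilon|^2$.

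The logistic term is handled via $\ln u_\varepsilon\le c_\delta' u_\varepsilon^{4/n}$, giving $\ell u_\varepsilon v_\varepsilon\ln u_\varepsilon\le\delta u_\varepsilon^{1+4/n}v_\varepsilon+c\,u_\varepsilon v_\varepsilon$.

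Collecting these estimates, I expect to reach
\begin{align*}
F_\varepsilon'(t)+\frac b2\int_\Omega\frac{|\nabla v_\varepsilon|^6}{v_\varepsilon^5}+\frac b2\int_\Omega u_\varepsilon\frac{|\nabla v_\varepsilon|^4}{v_\varepsilon^3}+\frac14\int_\Omega v_\varepsilon|\nabla u_\varepsilon|^2\le c_3\int_\Omega u_\varepsilon v_\varepsilon+c_3\int_\Omega v_\varepsilon|\nabla v_\varepsilon|^2 .
\end{align*}
I then integrate over $(0,T)$ for $T<T_{\rm max,\varepsilon}$. The functional is bounded below uniformly because $s\ln s\ge-\frac1e$, and $F_\varepsilon(0)$ is bounded uniformly in $\varepsilon$ by \eqref{ini}. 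The right-hand side is bounded by \eqref{local-5} and Lemma \ref{local-5'}. Letting $T\nearrow T_{\rm max,\varepsilon}$ yields \eqref{L3-1}, \eqref{L3-2} and \eqref{L3-3} simultaneously.

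I expect the main obstacle to be the chemotactic term: getting the exponent bookkeeping right so that the threshold $\alpha<\frac32+\frac1n$ enters precisely through the gain $1+\frac4n$ of Lemma \ref{LM}, while also handling the low-$\alpha$ range without producing a non-integrable $\int_\Omega v_\varepsilon$ term.
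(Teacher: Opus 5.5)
Your treatment of $\alpha\in[\frac32,\frac32+\frac1n)$, of the logistic term and of the final time integration is fine. The argument for $\alpha\in[1,\frac32)$ does not work, however. Consider a Young/H\"older split of $u_\varepsilon^{s}v_\varepsilon|\nabla v_\varepsilon|^2$ ($s>0$) among $u_\varepsilon\frac{|\nabla v_\varepsilon|^4}{v_\varepsilon^3}$, $\frac{|\nabla v_\varepsilon|^6}{v_\varepsilon^5}$ and $v_\varepsilon|\nabla v_\varepsilon|^2$, with weights $\theta_1,\theta_2,\theta_3\ge0$ and $\theta_1+\theta_2+\theta_3=1$. Such a split is impossible. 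Matching the power of $u_\varepsilon$ forces $\theta_1=s$. The resulting power of $|\nabla v_\varepsilon|$ is then $4\theta_1+6\theta_2+2\theta_3=2+2s+4\theta_2>2$.

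The bound $v_\varepsilon\le\|v_0\|_{L^\infty(\Omega)}$ only lets you adjust powers of $v_\varepsilon$. Nothing available at this stage controls $|\nabla v_\varepsilon|$ pointwise: Lemma \ref{Unif-v} is proved later, on the basis of the present lemma. So this mismatch cannot be repaired. Replacing $u_\varepsilon^{1/2}$ by a higher power does not help either. You would need that anyway for $\alpha>\frac54$, since $u_\varepsilon^{2\alpha-2}\le 1+u_\varepsilon^{1/2}$ fails for large $u_\varepsilon$ there. As written, your proof covers only $\alpha\ge\frac32$.

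The missing idea is to use the quantity $u_\varepsilon^{1+4/n}v_\varepsilon$, which Lemma \ref{LM} controls, in this regime as well, instead of $\frac{|\nabla v_\varepsilon|^6}{v_\varepsilon^5}$. Note that $0\le 2\alpha-2<1+\frac2n$ holds exactly for $\alpha\in[1,\frac32+\frac1n)$. Hence Young's inequality and $v_\varepsilon\le\|v_0\|_{L^\infty(\Omega)}$ give, for any $\eta>0$,
\begin{align*}
\chi^2u_\varepsilon^{2\alpha-2}v_\varepsilon|\nabla v_\varepsilon|^2&\le\eta\,u_\varepsilon^{1+\frac2n}v_\varepsilon|\nabla v_\varepsilon|^2+c_\eta\,v_\varepsilon|\nabla v_\varepsilon|^2\\
&\le\frac{\eta}{2}\,u_\varepsilon\frac{|\nabla v_\varepsilon|^4}{v_\varepsilon^3}+\frac{\eta}{2}\|v_0\|_{L^\infty(\Omega)}^4u_\varepsilon^{1+\frac4n}v_\varepsilon+c_\eta\,v_\varepsilon|\nabla v_\varepsilon|^2 .
\end{align*}
Lemma \ref{LM} then absorbs the middle term exactly as in your case $\alpha\ge\frac32$. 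The term $c_\eta v_\varepsilon|\nabla v_\varepsilon|^2$ is time-integrable by Lemma \ref{local-5'}.

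This is the paper's route. It treats the whole range at once, with the threshold entering as $2\alpha-2<1+\frac2n$, which is equivalent to your $4\alpha-5<1+\frac4n$. It also never produces the term $u_\varepsilon^{4\alpha-5}v_\varepsilon$, whose exponent may be negative.
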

\begin{proof}
    From Lemma \ref{L2}[\eqref{L2-1}], we have that 
     \begin{align}\label{L3.1}
        F'_\varepsilon(t)&+b \int_\Omega \frac{|\nabla v_\varepsilon|^6}{v_\varepsilon^5} +b \int_\Omega u_\varepsilon \frac{|\nabla v_\varepsilon|^4}{v_\varepsilon^3} +\frac{1}{2}\int_\Omega v_\varepsilon |\nabla u_\varepsilon|^2 \notag\\
        &\leq \chi^2 \int_\Omega u_\varepsilon^{2\alpha-2} v_\varepsilon |\nabla v_\varepsilon|^2+ \ell \int_\Omega u_\varepsilon v_\varepsilon \ln u_\varepsilon + \ell \int_\Omega u_\varepsilon v_\varepsilon ,
    \end{align}
    for all $t\in (0,T_{\rm max, \varepsilon})$ and $\varepsilon \in (0,1)$. In view of Lemma \ref{LM} and \eqref{local-3}, we can find $c_1>0$ such that 
    \begin{align}\label{L3.2}
      \int_\Omega u_\varepsilon^{1+\frac{4}{n}}v_\varepsilon \leq c_{1}\int_\Omega v_\varepsilon |\nabla u_\varepsilon|^2 +c_{1} \int_\Omega u_\varepsilon \frac{|\nabla v_\varepsilon|^4}{v_\varepsilon^3} +c_{1}\int_\Omega u_\varepsilon v_\varepsilon
   \end{align}
   for all $t\in (0,T_{\rm max, \varepsilon})$ and $\varepsilon \in (0,1)$. Putting
   \[
   \eta=  \min \left \{\frac{1}{4c_1(1+\left \|v_0 \right \|^4_{L^\infty(\Omega)})}, \frac{b}{2(c_1+1+c_1\left \|v_0 \right \|^4_{L^\infty(\Omega)}) } \right \},
   \]
   noting that $0 \leq 2\alpha-2 < 1+ \frac{2}{n}$ when $\alpha \in \left [1,\frac{3}{2}+ \frac{1}{n}\right )$, applying Young's inequality and using \eqref{L3.2}, we can find $c_2=c_2(\eta)>0$ and $c_3=c_3(\eta)>0$ satisfying 
   \begin{align}\label{L3.3}
      \chi^2 \int_\Omega u_\varepsilon^{2\alpha-2} v_\varepsilon |\nabla v_\varepsilon|^2+l \int_\Omega u_\varepsilon v_\varepsilon \ln u_\varepsilon  &\leq \eta \int_\Omega u_\varepsilon^{1+\frac{2}{n}} v_\varepsilon |\nabla v_\varepsilon|^2 +c_2 \int_\Omega v_\varepsilon |\nabla v_\varepsilon |^2+\eta \int_\Omega u_\varepsilon^{1+\frac{4}{n}} v_\varepsilon + c_3 \int_\Omega u_\varepsilon v_\varepsilon \notag \\
       &\leq \eta \left (1+ \left \|v_0 \right \|^4_{L^\infty(\Omega)} \right ) \int_\Omega u_\varepsilon^{1+\frac{4}{n}}v_\varepsilon + \eta \int_\Omega u_\varepsilon \frac{|\nabla v_\varepsilon|^4}{v_\varepsilon^3} \notag \\
       &\quad+c_2 \int_\Omega v_\varepsilon |\nabla v_\varepsilon |^2+c_3  \int_\Omega u_\varepsilon v_\varepsilon \notag \\
       &\leq c_1 \eta \left (1+ \left \|v_0 \right \|^4_{L^\infty(\Omega)} \right )\int_\Omega v_\varepsilon |\nabla u_\varepsilon|^2  +c_2\int_\Omega v_\varepsilon |\nabla v_\varepsilon|^2 \notag \\
       &\quad+  \eta \left (c_1+ 1+c_1\left \|v_0 \right \|^4_{L^\infty(\Omega)} \right ) \int_\Omega u_\varepsilon \frac{|\nabla v_\varepsilon|^4}{v_\varepsilon^3} \notag \\
       &\quad+ \left ( \eta c_1(1+ \left \|v_0 \right \|^4_{L^\infty(\Omega)}) +c_3 \right ) \int_\Omega u_\varepsilon v_\varepsilon \notag \\
       &\leq \frac{1}{4} \int_\Omega v_\varepsilon |\nabla u_\varepsilon|^2 +c_2\int_\Omega v_\varepsilon |\nabla v_\varepsilon|^2  \notag \\
       &\quad + \frac{b}{2}\int_\Omega u_\varepsilon \frac{|\nabla v_\varepsilon|^4}{v_\varepsilon^3}+\left (c_3+\frac{1}{4} \right )\int_\Omega u_\varepsilon v_\varepsilon
   \end{align}
     for all $t\in (0,T_{\rm max, \varepsilon})$ and $\varepsilon \in (0,1)$. Combining \eqref{L3.1} and \eqref{L3.3}, we infer that
     \begin{align*}
          F'_\varepsilon(t) +b \int_\Omega \frac{|\nabla v_\varepsilon|^6}{v_\varepsilon^5}+\frac{b}{2} \int_\Omega u_\varepsilon \frac{|\nabla v_\varepsilon|^4}{v_\varepsilon^3} +\frac{1}{4}\int_\Omega v_\varepsilon |\nabla u_\varepsilon|^2  &\leq c_2 \int_\Omega v_\varepsilon |\nabla v_\varepsilon|^2 +c_4 \int_\Omega u_\varepsilon v_\varepsilon 
     \end{align*}
    for all $t\in (0,T_{\rm max, \varepsilon})$ and $\varepsilon \in (0,1)$, where $c_4=c_3+\frac{1}{4}+\ell$. Integrating this over time and making use of \eqref{local-5} and \eqref{local-5'}, we can find $c_5>0$ such that
    \begin{align*}
       b\int_0^t  \int_\Omega \frac{|\nabla v_\varepsilon|^6}{v_\varepsilon^5}+ \frac{b}{2} \int_0^t \int_\Omega u_\varepsilon \frac{|\nabla v_\varepsilon|^4}{v_\varepsilon^3} +\frac{1}{4} \int_0^t \int_\Omega v_\varepsilon |\nabla u_\varepsilon|^2 &\leq c_2 \int_0^t \int_\Omega v_\varepsilon |\nabla v_\varepsilon|^2 +c_4 \int_0^t \int_\Omega u_\varepsilon v_\varepsilon \notag \\
        &\quad + \int_\Omega (u_0+1) \ln (u_0+1) +a \int_\Omega \frac{|\nabla v_0|^4}{v_0^3} + \frac{|\Omega|}{e} \\
        &\leq c_5\qquad \text{for all } t\in (0,T_{\rm max, \varepsilon}) \quad \text{and } \varepsilon \in (0,1),
    \end{align*}
     where the last inequality holds due to $x\ln x \geq -\frac{1}{e}$ for all $x>0$. The proof is now complete.

\end{proof}

\section{\texorpdfstring{$L^p$ boundedness for $u_\varepsilon$ for arbitrary $p>1$}{Lp boundedness for u_epsilon}} \label{S4}
The main goal of this section is to establish $L^p$ bounds for solutions of \eqref{2} for every $p>1$, via an iteration argument. The following lemma is the key ingredient in this iterative procedure: it asserts that if $u_\varepsilon$ is bounded in $L^{p_k}$, then it is also bounded in $L^{p_{k+1}}$ for some sequence $(p_k)_{k \in \mathbb{N}}$ with $p_{k+1}>p_k \geq 1$ and $p_{k} \leq p_*$ for some $p_*>1$.

\begin{lemma} \label{M}
    Let $\Omega \subset \mathbb{R}^n$ with $n \in \left \{ 3,4,5 \right \}$ be a bounded convex domain with smooth boundary and assume that $\alpha \in \left [1, \frac{3}{2}+\frac{1}{n} \right )$ and $1\leq p_0 < \frac{2n(2-\alpha)}{n-2}$. Suppose that there exists $M>0$ such that 
    \begin{align} \label{M-1}
        \int_\Omega u_\varepsilon^{p_0}(\cdot,t) \leq M\qquad \text{for all } t\in (0,T_{\rm max, \varepsilon}) \quad \text{and } \varepsilon \in (0,1)
    \end{align}
    and
    \begin{align} \label{M-2}
        \int_0^{T_{\rm max, \varepsilon}} \int_\Omega u_\varepsilon^{p_0-1} v_\varepsilon |\nabla u_\varepsilon|^2 \leq M \qquad \text{for all } \varepsilon \in (0,1)
        \end{align}
    then for any $1< p \leq \min \left \{ 1+\frac{4}{n}p_0,  \frac{2}{n}p_0+4-2\alpha    \right \}$ there exists $C=C(p)>0$ such that
    \begin{align} \label{M-4}
        \int_\Omega u_\varepsilon^p(\cdot,t) \leq C \qquad \text{for all }t\in (0,T_{\rm max, \varepsilon}) \quad \text{and } \varepsilon \in (0,1)
    \end{align}
    and  
     \begin{align} \label{M-5}
        \int_0^{T_{\rm max, \varepsilon}} \int_\Omega u_\varepsilon^{p-1} v_\varepsilon |\nabla u_\varepsilon|^2 \leq C \qquad \text{for all } \varepsilon \in (0,1).
        \end{align}
\end{lemma}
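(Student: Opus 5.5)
We test the first equation of \eqref{2} against $u_\varepsilon^{p-1}$. Since $p>1$, this gives
\begin{align*}
\frac{1}{p}\frac{d}{dt}\int_\Omega u_\varepsilon^p + (p-1)\int_\Omega u_\varepsilon^{p-1}v_\varepsilon|\nabla u_\varepsilon|^2 = \chi(p-1)\int_\Omega u_\varepsilon^{p+\alpha-2}v_\varepsilon\nabla u_\varepsilon\cdot\nabla v_\varepsilon + \ell\int_\Omega u_\varepsilon^p v_\varepsilon .
\end{align*}
Young's inequality bounds the cross term by $\frac{p-1}{2}\int_\Omega u_\varepsilon^{p-1}v_\varepsilon|\nabla u_\varepsilon|^2 + c\int_\Omega u_\varepsilon^{p+2\alpha-3}v_\varepsilon|\nabla v_\varepsilon|^2$. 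A second Young inequality, with exponents $2$ and $2$ after writing $v_\varepsilon|\nabla v_\varepsilon|^2 = (u_\varepsilon^{1/2}|\nabla v_\varepsilon|^2 v_\varepsilon^{-3/2})(u_\varepsilon^{-1/2}v_\varepsilon^{5/2})$, gives
\begin{align*}
\int_\Omega u_\varepsilon^{p+2\alpha-3}v_\varepsilon|\nabla v_\varepsilon|^2 \le \int_\Omega u_\varepsilon\frac{|\nabla v_\varepsilon|^4}{v_\varepsilon^3} + \|v_0\|_{L^\infty(\Omega)}^4\int_\Omega u_\varepsilon^{2p+4\alpha-7}v_\varepsilon .
\end{align*}
The first term is time-integrable by \eqref{L3-1}, and the logistic term $\ell\int_\Omega u_\varepsilon^p v_\varepsilon$ is of the same type as the second. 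Everything therefore reduces to showing that $\int_0^{T_{\rm max,\varepsilon}}\int_\Omega u_\varepsilon^{s}v_\varepsilon$ is bounded for $s=\max\{p,\,2p+4\alpha-7\}$. For the exponent $p$ alone, a simpler route is to interpolate between $u_\varepsilon v_\varepsilon$, whose time integral is controlled by \eqref{local-5}, and a higher power.

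The key step is Lemma \ref{LM} applied with $f=u_\varepsilon(\cdot,t)$, $g=v_\varepsilon(\cdot,t)$ and exponent $p_0$, which is admissible thanks to \eqref{M-1}. It yields
\begin{align*}
\int_\Omega u_\varepsilon^{1+\frac4n p_0}v_\varepsilon \le C\int_\Omega u_\varepsilon^{p_0-1}v_\varepsilon|\nabla u_\varepsilon|^2 + C\int_\Omega u_\varepsilon\frac{|\nabla v_\varepsilon|^4}{v_\varepsilon^3} + C\int_\Omega u_\varepsilon v_\varepsilon .
\end{align*}
By \eqref{M-2}, \eqref{L3-1} and \eqref{local-5}, the right-hand side is integrable over $(0,T_{\rm max,\varepsilon})$ uniformly in $\varepsilon$, so $\int_0^{T_{\rm max,\varepsilon}}\int_\Omega u_\varepsilon^{\sigma}v_\varepsilon\le C$ for every $\sigma\in[1,1+\frac4n p_0]$, the intermediate exponents following by interpolation with $u_\varepsilon v_\varepsilon$. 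Both required exponents then lie in this range. The restriction $p\le 1+\frac4n p_0$ covers $p$ itself. The restriction $p\le \frac2n p_0+4-2\alpha$ gives $2p+4\alpha-7\le \frac4np_0+1$, which covers the chemotactic exponent; this is exactly where the second entry of the minimum comes from. If $2p+4\alpha-7<1$, we simply use $u_\varepsilon^{s}\le u_\varepsilon^p+u_\varepsilon$ in place of the direct bound. One also has to check that $p_0<\frac{2n(2-\alpha)}{n-2}$ (and $n\le5$) keeps $p_0$ within the range $p_0<\frac{n}{(n-4)_+}$ required by Lemma \ref{LM}; for $n=5$ this means $p_0<5$, which holds since $\frac{10(2-\alpha)}{3}\le\frac{10}{3}$.

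Integrating the differential inequality in time now gives
\begin{align*}
\frac1p\int_\Omega u_\varepsilon^p(\cdot,t) + \frac{p-1}{2}\int_0^t\int_\Omega u_\varepsilon^{p-1}v_\varepsilon|\nabla u_\varepsilon|^2 \le \frac1p\int_\Omega (u_0+1)^p + C ,
\end{align*}
which yields \eqref{M-4} and \eqref{M-5} together. The main obstacle is the exponent bookkeeping in the second step: verifying that the chemotactic term $u_\varepsilon^{p+2\alpha-3}v_\varepsilon|\nabla v_\varepsilon|^2$ can be split so that its $u_\varepsilon$-power does not exceed $1+\frac4np_0$, and that the hypotheses of Lemma \ref{LM} are met. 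If the Young splitting above turns out to be suboptimal, the fallback is to use weights $\frac{|\nabla v_\varepsilon|^6}{v_\varepsilon^5}$ together with \eqref{L3-3}, but I expect the $|\nabla v_\varepsilon|^4/v_\varepsilon^3$ splitting to produce precisely the bound $\frac2np_0+4-2\alpha$ appearing in the statement.
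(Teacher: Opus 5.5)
You follow the paper's own strategy: test with $u_\varepsilon^{p-1}$, split the taxis term against $u_\varepsilon\frac{|\nabla v_\varepsilon|^4}{v_\varepsilon^3}$, and control $\int_\Omega u_\varepsilon^{1+\frac4np_0}v_\varepsilon$ through Lemma \ref{LM}, whose right-hand side is time-integrable by \eqref{M-2}, \eqref{L3-1}, \eqref{local-5}. Your check that $p_0<\frac{10}{3}<5$ keeps Lemma \ref{LM} applicable for $n=5$ is a point the paper leaves implicit. However, there is a genuine gap in the case $s:=2p+4\alpha-7<1$.

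Your remedy $u_\varepsilon^s\le u_\varepsilon^p+u_\varepsilon$ is false for $s<1$ and small $u_\varepsilon$. For $s<0$, $u_\varepsilon^s$ is not even bounded as $u_\varepsilon\to0$, and no $\varepsilon$-independent positive lower bound for $u_\varepsilon$ is available. For $s\in[0,1)$ the correct bound $u_\varepsilon^s\le 1+u_\varepsilon$ leaves you with $\int_0^{T_{\rm max,\varepsilon}}\int_\Omega v_\varepsilon$, which none of the available estimates controls. This case genuinely occurs. For $\alpha=1$ and $p$ close to $1$ one has $s\approx-1$. It also arises at the largest admissible $p$: for $n=5$, $\alpha=1$, $p_0=1$ you get $p=\frac95$ and $s=\frac35$, which is exactly the first step of the bootstrap in Lemma \ref{L4}.

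The missing ingredient is the dissipation bound $\int_0^{T_{\rm max,\varepsilon}}\int_\Omega v_\varepsilon|\nabla v_\varepsilon|^2\le C$ from Lemma \ref{local-5'}, which you never invoke. The paper uses it as follows:
\begin{itemize}
\item It first notes $0<p+2\alpha-3\le 1+\frac2np_0$ and estimates $u_\varepsilon^{p+2\alpha-3}\le u_\varepsilon^{1+\frac2np_0}+1$.
\item The taxis term is then bounded by $\int_\Omega u_\varepsilon^{1+\frac2np_0}v_\varepsilon|\nabla v_\varepsilon|^2+\int_\Omega v_\varepsilon|\nabla v_\varepsilon|^2$.
\item Only after that is your $2$--$2$ Young splitting applied. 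It produces $\int_\Omega u_\varepsilon\frac{|\nabla v_\varepsilon|^4}{v_\varepsilon^3}+\|v_0\|_{L^\infty(\Omega)}^4\int_\Omega u_\varepsilon^{1+\frac4np_0}v_\varepsilon$, whose exponent sits exactly at the top of the range covered by Lemma \ref{LM} and never drops below $1$.
\end{itemize}
Equivalently, in your own bookkeeping: when $s<1$ one has $0<p+2\alpha-3=\frac{s+1}{2}<1$, so $u_\varepsilon^{p+2\alpha-3}\le 1+u_\varepsilon$. The resulting $\int_\Omega u_\varepsilon v_\varepsilon|\nabla v_\varepsilon|^2$ splits into $\int_\Omega u_\varepsilon\frac{|\nabla v_\varepsilon|^4}{v_\varepsilon^3}$ and $\int_\Omega u_\varepsilon v_\varepsilon$, while the constant part is absorbed by Lemma \ref{local-5'}. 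With this repair your argument goes through and is the paper's proof.
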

\begin{proof}
    Multiplying the first equation of \eqref{2} by $u^{p-1}$, integrating by parts, using \eqref{local-4}, noting that $p+2\alpha-3 \leq 1+\frac{2}{n}p_0$ because $p \leq  \frac{2}{n}p_0+4-2\alpha $ and applying Young's inequality yields
    \begin{align} \label{M.1}
        \frac{1}{p} \frac{d}{dt}\int_\Omega u_\varepsilon^p +\frac{p-1}{2}\int_\Omega u_\varepsilon^{p-1}v_\varepsilon |\nabla u_\varepsilon|^2 &\leq \frac{(p-1)\chi^2}{2}\int_\Omega u^{p+2\alpha-3}_\varepsilon v_\varepsilon |\nabla v_\varepsilon|^2 + \ell \int_\Omega u^p_\varepsilon v_\varepsilon   \notag \\
        &\leq \frac{(p-1)\chi^2}{2} \int_\Omega u^{1+\frac{2}{n}p_0}_\varepsilon v_\varepsilon |\nabla v_\varepsilon|^2 +\frac{(p-1)\chi^2}{2} \int_\Omega v_\varepsilon |\nabla v_\varepsilon|^2  \notag \\
        &\quad+ \ell \int_\Omega u_\varepsilon^{1+\frac{4}{n}p_0} v_\varepsilon +\ell \int_\Omega u_\varepsilon v_\varepsilon \notag \\
        &\leq (\ell+1)   \int_\Omega u_\varepsilon^{1+\frac{4}{n}p_0}v_\varepsilon+  c_1\int_\Omega u_\varepsilon \frac{|\nabla v_\varepsilon|^4}{v_\varepsilon^3} +\frac{(p-1)\chi^2}{2} \int_\Omega v_\varepsilon |\nabla v_\varepsilon|^2 \notag \\
        &\quad + \ell \int_\Omega u_\varepsilon v_\varepsilon,
    \end{align}
    for all $t \in (0,T_{\rm max}, \varepsilon)$ and $\varepsilon \in (0,1)$, where $c_1>0$. Applying Lemma \ref{LM} and using \eqref{M-1}, we can find $c_2>0$ such that 
    \begin{align}\label{M.2}
        (\ell+1)   \int_\Omega u_\varepsilon^{1+\frac{4}{n}p_0}v_\varepsilon \leq c_2 \int_\Omega u^{p_0-1}_\varepsilon v_\varepsilon |\nabla u_\varepsilon|^2 + c_2 \int_\Omega u_\varepsilon \frac{|\nabla v_\varepsilon|^4}{v_\varepsilon^3} +c_2 \int_\Omega u_\varepsilon v_\varepsilon,
    \end{align}
    for all $t \in (0,T_{\rm max}, \varepsilon)$ and $\varepsilon \in (0,1)$. Combining \eqref{M.1} and \eqref{M.2}, we obtain 
    \begin{align*}
        \frac{1}{p} \frac{d}{dt}\int_\Omega u_\varepsilon^p +\frac{p-1}{2}\int_\Omega u_\varepsilon^{p-1}v_\varepsilon |\nabla u_\varepsilon|^2 &\leq c_2 \int_\Omega u^{p_0-1}_\varepsilon v_\varepsilon |\nabla u_\varepsilon|^2 + (c_2+c_1) \int_\Omega u_\varepsilon \frac{|\nabla v_\varepsilon|^4}{v_\varepsilon^3} \notag \\
        &\quad+(c_2+\ell) \int_\Omega u_\varepsilon v_\varepsilon +c_1 \int_\Omega v_\varepsilon |\nabla v_\varepsilon|^2
    \end{align*}
    for all $t \in (0,T_{\rm max}, \varepsilon)$ and $\varepsilon \in (0,1)$. Integrating this in time and using \eqref{M-2}, \eqref{L3-1}, \eqref{local-5} and Lemma \ref{local-5'}, we can find $c_3>0$ such that
    \begin{align*}
        \frac{1}{p}\int_\Omega u_\varepsilon^p(\cdot,t) +\frac{p-1}{2}\int_0^t\int_\Omega u_\varepsilon^{p-1}v_\varepsilon |\nabla u_\varepsilon|^2 &\leq c_2\int_0^t \int_\Omega u^{p_0-1}_\varepsilon v_\varepsilon |\nabla u_\varepsilon|^2 + (c_2+c_1) \int_0^t\int_\Omega u_\varepsilon \frac{|\nabla v_\varepsilon|^4}{v_\varepsilon^3} \notag \\
        &\quad+(c_2+\ell) \int_0^t\int_\Omega u_\varepsilon v_\varepsilon +c_1 \int_0^t \int_\Omega v_\varepsilon |\nabla v_\varepsilon|^2 +\frac{1}{p}\int_\Omega (u_0+1)^p \\
        &\leq c_3 \qquad \text{for all }t\in (0,T_{\rm max,\varepsilon}) \quad \text{and }\varepsilon \in (0,1).
    \end{align*}
    The proof is now complete.
\end{proof}

As a consequence of the lemma above, we may now establish an $L^{p_0}$ bound for $u_\varepsilon$ for some $p_0>\frac{n}{2}$, by means of an iteration argument.
\begin{lemma} \label{L4}
      Let $\Omega \subset \mathbb{R}^n$ with $n \in \left \{ 3,4,5 \right \}$ be a bounded convex domain with smooth boundary and let $\alpha  \in \left [ 1, \frac{5}{2}- \frac{n}{4} \right )$. There exist $q_0>\frac{n}{2}$ and $C>0$ such that 
    \begin{align*}
        \int_\Omega u^{q_0}_\varepsilon(\cdot,t) \leq C \qquad \text{for all }t\in (0,T_{\rm max,\varepsilon}) \quad \text{and }\varepsilon \in (0,1).
    \end{align*}
\end{lemma}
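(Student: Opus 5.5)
The plan is to iterate Lemma \ref{M} along the sequence $(p_k)$ of Lemma \ref{D}, and then use the fact that its limit exceeds $\frac n2$ exactly in the stated range of $\alpha$.

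\textbf{Step 1: the range of $\alpha$ and the limit.} Since $\frac52-\frac n4\le \frac32+\frac1n$ for $n\in\{3,4,5\}$, every $\alpha\in[1,\frac52-\frac n4)$ is admissible for Lemma \ref{D}, Lemma \ref{L3} and Lemma \ref{M}. Lemma \ref{D} gives
\[
\lim_{k\to\infty}p_k=p_*:=\frac{2n(2-\alpha)}{n-2}.
\]
A direct computation shows that $p_*>\frac n2$ is equivalent to $4(2-\alpha)>n-2$, that is, $\alpha<\frac52-\frac n4$. Because $(p_k)$ increases strictly to $p_*$, there is a finite index $K$ with $p_K>\frac n2$. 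We then set $q_0:=p_K$.

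\textbf{Step 2: the base case $k=0$, $p_0=1$.} Hypothesis \eqref{M-1} is the mass bound \eqref{local-3}. Hypothesis \eqref{M-2} with $p_0=1$ reads $\int_0^{T_{\rm max,\varepsilon}}\int_\Omega v_\varepsilon|\nabla u_\varepsilon|^2\le C$, which is \eqref{L3-2}. Both bounds are uniform in $\varepsilon$. Lemma \ref{M} needs $1\le p_0<p_*$, and this holds because $p_*>1$ for $\alpha<\frac32+\frac1n$, as noted in the proof of Lemma \ref{D}.

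\textbf{Step 3: the inductive step.} Suppose both \eqref{M-1} and \eqref{M-2} hold with $p_0$ replaced by $p_k$, for some constant $M_k$ independent of $\varepsilon$. By Lemma \ref{D}, $1\le p_k<p_*$. We apply Lemma \ref{M} with the choice
\[
p=p_{k+1}=\min\left\{1+\tfrac4n p_k,\ \tfrac2n p_k+4-2\alpha\right\},
\]
which satisfies $p>1$ and lies in the admissible range. Conclusions \eqref{M-4} and \eqref{M-5} then give exactly the two hypotheses at level $p_{k+1}$, with a new constant $M_{k+1}$. After $K$ steps we obtain $\sup_{t,\varepsilon}\int_\Omega u_\varepsilon^{q_0}\le C$.

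\textbf{Expected obstacle.} The only real point of care is the hidden restriction in Lemma \ref{LM}, which Lemma \ref{M} invokes at the exponent $p_k$: it requires $p_k<\frac{n}{(n-4)_+}$. For $n=3$ this is automatic. For $n=4,5$ we must check $p_k<p_*\le \frac{n}{(n-4)_+}$.

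For $n=4$ the bound is infinite, so there is nothing to check. For $n=5$ we have $p_*=\frac{10(2-\alpha)}{3}\le\frac{10}{3}<5$, so the condition holds. In general $p_*\le\frac{2n}{n-2}$, which lies below $\frac{n}{n-4}$ when $n\le5$, so the restriction is always met.

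With this verified, the finite iteration closes and yields the lemma.
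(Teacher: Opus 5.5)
Your proposal is correct and follows the paper's proof essentially verbatim. You start from $p_0=1$ via \eqref{local-3} and \eqref{L3-2}, iterate Lemma~\ref{M} along the sequence of Lemma~\ref{D}, and stop at the first index with $p_k>\frac n2$, which exists because $\frac{2n(2-\alpha)}{n-2}>\frac n2$ exactly when $\alpha<\frac52-\frac n4$. Your check that $p_k<\frac{n}{(n-4)_+}$, which Lemma~\ref{LM} needs inside Lemma~\ref{M}, is correct and is a step the paper leaves implicit.
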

\begin{proof}
   Putting $p_0=1$ and applying Lemma \ref{local}[\eqref{local-3}] and Lemma \ref{L3}[\eqref{L3-2}], we can find $c_1>0$ such that 
   \begin{align} \label{L4.1}
       \int_\Omega u^{p_0}_\varepsilon(\cdot,t) \leq c_1 \qquad \text{for all }t\in (0,T_{\rm max,\varepsilon}) \quad \text{and }\varepsilon \in (0,1)
   \end{align}
   and 
   \begin{align} \label{L4.2}
        \int_0^{T_{\rm max, \varepsilon}} \int_\Omega u_\varepsilon^{p_0-1} v_\varepsilon |\nabla u_\varepsilon|^2 \leq c_1 \qquad \text{for all } \varepsilon \in (0,1).
   \end{align}
   Setting $p_1= \min \left \{1+\frac{4}{n}p_0, \frac{2}{n}p_0+4-2\alpha \right \}$, using \eqref{L4.1} and \eqref{L4.2} together with Lemma \ref{M}, we can find $c_2>0$ such that 
     \begin{align*} 
       \int_\Omega u^{p_1}_\varepsilon(\cdot,t) \leq c_2 \qquad \text{for all }t\in (0,T_{\rm max,\varepsilon}) \quad \text{and }\varepsilon \in (0,1)
   \end{align*}
   and 
   \begin{align*} 
        \int_0^{T_{\rm max, \varepsilon}} \int_\Omega u_\varepsilon^{p_1-1} v_\varepsilon |\nabla u_\varepsilon|^2 \leq c_2 \qquad \text{for all } \varepsilon \in (0,1).
   \end{align*}
   By an inductive argument and Lemma \ref{M}, we deduce that for any $k \in \mathbb{N}$, there exists $c_{k+1}>0$ such that 
      \begin{align} \label{L4.3}
       \int_\Omega u^{p_k}_\varepsilon(\cdot,t) \leq c_{k+1} \qquad \text{for all }t\in (0,T_{\rm max,\varepsilon}) \quad \text{and }\varepsilon \in (0,1)
   \end{align}
   and 
   \begin{align*} 
        \int_0^{T_{\rm max, \varepsilon}} \int_\Omega u_\varepsilon^{p_k-1} v_\varepsilon |\nabla u_\varepsilon|^2 \leq c_{k+1} \qquad \text{for all } \varepsilon \in (0,1),
   \end{align*}
   where $p_k = \min \left \{1+\frac{4}{n}p_{k-1}, \frac{2}{n}p_{k-1}+4-2\alpha \right \}$. From the assumption $1\leq \alpha < \frac{5}{2}-\frac{n}{4}$, we find that 
   \begin{align*}
       \frac{2n(2-\alpha)}{n-2}>\frac{n}{2} \qquad \text{for }n \in \left \{3,4,5 \right \}.
   \end{align*}
   In view of Lemma \ref{D}, we have that 
   \begin{align*}
      1\leq  p_k<  \frac{2n(2-\alpha)}{n-2} \qquad \text{for all }k \geq 0
   \end{align*}
   and
   \begin{align*}
       \lim_{k \to \infty }p_k= \frac{2n(2-\alpha)}{n-2}.
   \end{align*}
    Therefore, we can find $k_0 \in \mathbb{N}$ such that 
   \begin{align*}
       \frac{2n(2-\alpha)}{(n-2)}>p_{k_0}>\frac{n}{2} \qquad \text{for }n \in \left \{3,4,5 \right \}.
   \end{align*}
   This, together with \eqref{L4.3} implies that there exists $C>0$ such that 
    \begin{align*}
        \int_\Omega u^{q_0}_\varepsilon(\cdot,t) \leq C \qquad \text{for all }t\in (0,T_{\rm max,\varepsilon}) \quad \text{and }\varepsilon \in (0,1),
    \end{align*}
    with $q_0=p_{k_0}>\frac{n}{2}$. The proof is now complete.
\end{proof}

Thanks to the $L^{\frac{n}{2}+}$ bound for $u_\varepsilon$ and the inequality established in Lemma \ref{PL2}, we can derive $L^p$ bounds for solutions of \eqref{2} for arbitrary $p>1$.

\begin{lemma} \label{Lp}
     Let $\Omega \subset \mathbb{R}^n$ with $n \in \left \{ 3,4,5 \right \}$ be a bounded convex domain with smooth boundary and let $\alpha \in \left [1, \frac{5}{2}-\frac{n}{4} \right )$. For any $p>1$, there exists $C>0$ such that 
    \begin{align}\label{Lp-1}
        \int_\Omega u^p_\varepsilon(\cdot,t) \leq C \qquad \text{for all }t\in (0,T_{\rm max, \varepsilon}) \quad \text{and }\varepsilon \in (0,1)
    \end{align}
    and 
    \begin{align} \label{Lp-2}
        \int_0^{T_{\rm max,\varepsilon}} \int_\Omega u_\varepsilon^{p-1} v_\varepsilon |\nabla u_\varepsilon|^2 \leq C \qquad \text{for all } \varepsilon \in (0,1)
    \end{align}
    as well as 
    \begin{align} \label{Lp-3}
         \int_0^{T_{\rm max,\varepsilon}}  \int_\Omega \frac{|\nabla v_\varepsilon|^{2p+4}} {v_\varepsilon^{2p+3}}\qquad \text{for all } \varepsilon \in (0,1).
    \end{align}
\end{lemma}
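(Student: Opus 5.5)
Fix $p>1$; it suffices to treat large $p$, since lower powers follow by Hölder's inequality and \eqref{local-3}. The plan is to couple the $L^p$ energy for $u_\varepsilon$ with the weighted gradient functional $\int_\Omega \frac{|\nabla v_\varepsilon|^{q}}{v_\varepsilon^{q-1}}$ for $q=2p+2$. The closing tool is Lemma \ref{PL2}, which is available because Lemma \ref{L4} gives $\int_\Omega u_\varepsilon^{q_0}\le C$ with $q_0>\frac n2$ uniformly in time and $\varepsilon$.

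\textbf{Step 1: the $L^p$ energy.} I test the first equation of \eqref{2} by $u_\varepsilon^{p-1}$ and use Young's inequality together with \eqref{local-4}. This gives
\begin{align*}
\frac1p\frac{d}{dt}\int_\Omega u_\varepsilon^p+\frac{p-1}{2}\int_\Omega u_\varepsilon^{p-1}v_\varepsilon|\nabla u_\varepsilon|^2\le \frac{(p-1)\chi^2}{2}\int_\Omega u_\varepsilon^{p+2\alpha-3}v_\varepsilon|\nabla v_\varepsilon|^2+\ell\int_\Omega u_\varepsilon^pv_\varepsilon .
\end{align*}
Write $X:=\int_\Omega u_\varepsilon^{p+2}v_\varepsilon$, $G:=\int_\Omega u_\varepsilon\frac{|\nabla v_\varepsilon|^{2p+2}}{v_\varepsilon^{2p+1}}$, and $D:=\int_\Omega u_\varepsilon^{p-1}v_\varepsilon|\nabla u_\varepsilon|^2$. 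Since $\alpha<2$, we have $p+2\alpha-3<p+1$. For the cross-diffusion term I apply Young's inequality with exponents $\frac{p+2}{p+2\alpha-3}$ (on the $u$ factor) and the conjugate exponent (on $|\nabla v_\varepsilon|^2/v_\varepsilon^{\cdot}$), writing $v_\varepsilon$ with suitable powers so that the $\nabla v_\varepsilon$ part becomes $u_\varepsilon\frac{|\nabla v_\varepsilon|^{2p+2}}{v_\varepsilon^{2p+1}}$ times bounded powers of $v_\varepsilon\le\|v_0\|_{L^\infty(\Omega)}$. The result is a bound by $\eta_1 X+C G+C\int_\Omega u_\varepsilon v_\varepsilon$. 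The exponent bookkeeping must be checked, but it mirrors \eqref{L3.3}. The logistic term is at most $\eta_1 X+C\int_\Omega u_\varepsilon v_\varepsilon$.

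\textbf{Step 2: the gradient functional.} By \eqref{W-1} with $q=2p+2$,
\begin{align*}
\frac{d}{dt}\int_\Omega\frac{|\nabla v_\varepsilon|^{2p+2}}{v_\varepsilon^{2p+1}}+C_1\int_\Omega\frac{|\nabla v_\varepsilon|^{2p+4}}{v_\varepsilon^{2p+3}}+C_1 G\le C_2\int_\Omega u_\varepsilon^{p+2}v_\varepsilon .
\end{align*}
I form $y(t)=\frac1p\int_\Omega u_\varepsilon^p+\mu\int_\Omega\frac{|\nabla v_\varepsilon|^{2p+2}}{v_\varepsilon^{2p+1}}$, with $\mu>0$ small and chosen first, so that $\mu C_1G$ absorbs the $CG$ from Step 1 (scale Step 1's Young constant accordingly). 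The right-hand side then contains $(2\eta_1+\mu C_2)X$. Lemma \ref{PL2} with $f=u_\varepsilon$, $g=v_\varepsilon$ gives $X\le\eta D+\eta G+C_\eta\int_\Omega u_\varepsilon v_\varepsilon$. Choosing $\eta$ small relative to $\frac{p-1}{2}$ and $\mu C_1$ absorbs everything, and I obtain
\begin{align*}
y'(t)+c\,D+c\int_\Omega\frac{|\nabla v_\varepsilon|^{2p+4}}{v_\varepsilon^{2p+3}}\le C\int_\Omega u_\varepsilon v_\varepsilon .
\end{align*}

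\textbf{Step 3: integrate in time.} Integrating and using \eqref{local-5} together with $u_0,v_0\in W^{1,\infty}(\Omega)$, $v_0>0$, yields \eqref{Lp-1}, \eqref{Lp-2} and \eqref{Lp-3} simultaneously.

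The main obstacle is the circularity in the absorption: $X$ must be absorbed by $D$ and $\mu G$, while the Young step in Step 1 produces a large multiple of $G$ that must be absorbed by $\mu C_1 G$. I resolve this by fixing $\mu$ first from the Step 1 constant. The smallness needed for $\mu C_2 X$ then comes entirely from the free parameter $\eta$ in Lemma \ref{PL2}, whose constant $C_\eta$ only multiplies the time-integrable quantity $\int_\Omega u_\varepsilon v_\varepsilon$.
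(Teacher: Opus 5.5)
Your proposal is correct and is essentially the paper's argument: the same functional $\frac1p\int_\Omega u_\varepsilon^p+\int_\Omega\frac{|\nabla v_\varepsilon|^{2p+2}}{v_\varepsilon^{2p+1}}$, \eqref{W-1} with $q=2p+2$, and Lemma \ref{PL2} fed by Lemma \ref{L4}. The one variation is that you absorb the cross-diffusion term into $G$, whereas the paper uses $u_\varepsilon^{p+2\alpha-3}\le u_\varepsilon^{p+1}+c$ and absorbs into the dissipation $\int_\Omega\frac{|\nabla v_\varepsilon|^{2p+4}}{v_\varepsilon^{2p+3}}$, at the cost of a term $c\int_\Omega v_\varepsilon|\nabla v_\varepsilon|^2$ controlled by Lemma \ref{local-5'}; your route works with Young exponents $p+1$ and $\frac{p+1}{p}$ (the exponents you wrote leave no factor $u_\varepsilon$ on the gradient side) and leaves only $C\int_\Omega u_\varepsilon v_\varepsilon$ when $p\ge 4-2\alpha$. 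One correction: H\"older and \eqref{local-3} handle the reduction to large $p$ only for \eqref{Lp-1}, while for \eqref{Lp-2} and \eqref{Lp-3} you need the endpoint bounds \eqref{L3-2} and \eqref{L3-3}, combined with $u_\varepsilon^{p-1}\le 1+u_\varepsilon^{P-1}$ and $\frac{|\nabla v_\varepsilon|^{2p+4}}{v_\varepsilon^{2p+3}}\le\frac{|\nabla v_\varepsilon|^{6}}{v_\varepsilon^{5}}+\frac{|\nabla v_\varepsilon|^{2P+4}}{v_\varepsilon^{2P+3}}$ for $1\le p\le P$, which is how the paper treats small $p$.
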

\begin{proof}
    From Lemma \ref{L4}, there exists $p_0>\frac{n}{2}$ such that 
     \begin{align}\label{Lp.1}
        \int_\Omega u^{p_0}_\varepsilon(\cdot,t) \leq c_1 \qquad \text{for all }t\in (0,T_{\rm max, \varepsilon}) \quad \text{and }\varepsilon \in (0,1)
    \end{align}
    and 
    \begin{align} \label{Lp.2}
        \int_0^{T_{\rm max,\varepsilon}} \int_\Omega u_\varepsilon^{p_0-1} v_\varepsilon |\nabla u_\varepsilon|^2 \leq c_1 \qquad \text{for all } \varepsilon \in (0,1),
    \end{align}
    for some $c_1>0$. Moreover, from Lemma \ref{L3}[\eqref{L3-2}], we can find $c_2>0 $ such that
    \begin{align}\label{Lp.3}
         \int_0^{T_{\rm max,\varepsilon}} \int_\Omega  v_\varepsilon |\nabla u_\varepsilon|^2 \leq c_2 \qquad \text{for all } \varepsilon \in (0,1).
    \end{align}
    Therefore, when $1<p \leq p_0$, we make use of \eqref{Lp.1}, \eqref{Lp.2} and \eqref{Lp.3} together with Young's inequality to deduce \eqref{Lp-1} and \eqref{Lp-2}. In case $p>p_0$, we test the first equation of \eqref{2} by $u_\varepsilon^{p-1}$ and apply Young's inequality to obtain
    \begin{align}\label{Lp.4}
       \frac{1}{p} \frac{d}{dt} \int_\Omega u_\varepsilon^p +\frac{p-1}{2}\int_\Omega u_\varepsilon^{p-1}v_\varepsilon |\nabla u_\varepsilon|^2 &\leq \frac{(p-1)\chi^2}{2}\int_\Omega u_\varepsilon ^{p+2\alpha-3}v_\varepsilon |\nabla v_\varepsilon|^2+ \ell \int_\Omega u_\varepsilon^p v_\varepsilon \notag \\
       &\leq  \int_\Omega u^{p+1}_\varepsilon v_\varepsilon |\nabla v_\varepsilon|^2+c_3 \int_\Omega v_\varepsilon |\nabla v_\varepsilon|^2+\ell \int_\Omega u_\varepsilon^p v_\varepsilon
    \end{align}
    for all $t\in (0,T_{\rm max, \varepsilon})$ and $\varepsilon \in (0,1)$, where $c_3>0$. Applying Lemma \ref{W}[\eqref{W-1}] with $q=2p+2$, there exist $c_3>0$ and $c_4>0$ such that 
    \begin{align}\label{Lp.5}
        \frac{d}{dt}\int_\Omega \frac{|\nabla v_\varepsilon|^{2p+2}}{v_\varepsilon^{2p+1}}+ c_3\int_\Omega u_\varepsilon \frac{|\nabla v_\varepsilon|^{2p+2}}{v_\varepsilon^{2p+1}}+c_3 \int_\Omega \frac{|\nabla v_\varepsilon|^{2p+4}}{v_\varepsilon^{2p+3}} \leq c_4 \int_\Omega u_\varepsilon^{p+2} v_\varepsilon
    \end{align}
     for all $t\in (0,T_{\rm max, \varepsilon})$ and $\varepsilon \in (0,1)$. In light of Young's inequality, we can find $c_5>0$ and $c_6>0$ such that
    \begin{align}\label{Lp.6}
        \int_\Omega u^{p+1}_\varepsilon v_\varepsilon |\nabla v_\varepsilon|^2+\ell \int_\Omega u_\varepsilon^p v_\varepsilon \leq \frac{c_3}{4} \int_\Omega \frac{|\nabla v_\varepsilon|^{2p+4}}{v_\varepsilon^{2p+3}}+c_5 \int_\Omega u_\varepsilon ^{p+2} v_\varepsilon +c_6\int_\Omega u_\varepsilon v_\varepsilon
    \end{align}
     for all $t\in (0,T_{\rm max, \varepsilon})$ and $\varepsilon \in (0,1)$. From \eqref{Lp.1}, we can apply Lemma \ref{PL2} with $\eta = \min \left \{ \frac{p-1}{4(c_3+c_5)}, \frac{c_3}{4(c_3+c_5)} \right \}$ to deduce that 
     \begin{align}\label{Lp.7}
         (c_3+c_5)\int_\Omega u_\varepsilon ^{p+2} v_\varepsilon &\leq \eta(c_3+c_5)\int_\Omega u_\varepsilon^{p-1}v_\varepsilon |\nabla u_\varepsilon|^2 + \eta(c_3+c_5)\int_\Omega u_\varepsilon \frac{|\nabla v_\varepsilon|^{2p+2}}{v_\varepsilon^{2p+1}}+c_7 \int_\Omega u_\varepsilon v_\varepsilon \notag \\
         &\leq \frac{p-1}{4}\int_\Omega u_\varepsilon^{p-1}v_\varepsilon |\nabla u_\varepsilon|^2 +\frac{c_3}{4}\int_\Omega u_\varepsilon \frac{|\nabla v_\varepsilon|^{2p+2}}{v_\varepsilon^{2p+1}}+c_7 \int_\Omega u_\varepsilon v_\varepsilon
     \end{align}
     for all $t\in (0,T_{\rm max, \varepsilon})$ and $\varepsilon \in (0,1)$, where $c_7>0$. Collecting from \eqref{Lp.4} to \eqref{Lp.7}, we arrive at 
     \begin{align*}
         &\frac{d}{dt} \left \{ \frac{1}{p}\int_\Omega u_\varepsilon^p +\int_\Omega \frac{|\nabla v_\varepsilon|^{2p+2}}{v_\varepsilon^{2p+1}} \right \} +\frac{p-1}{4} \int_\Omega u_\varepsilon^{p-1}v_\varepsilon |\nabla u_\varepsilon|^2 + \frac{c_3}{2} \int_\Omega \frac{|\nabla v_\varepsilon|^{2p+4}}{v_\varepsilon^{2p+3}}  \\
         &\leq  c_8 \int_\Omega u_\varepsilon v_\varepsilon+c_3 \int_\Omega v_\varepsilon |\nabla v_\varepsilon|^2  \qquad \text{for all }t\in (0,T_{\rm max, \varepsilon}) \quad \text{and }\varepsilon \in (0,1),
     \end{align*}
    where $c_8=c_6+c_7$. Integrating this over time and using \eqref{local-5} and Lemma \ref{local-5'}, we can find $c_9>0$ such that
    \begin{align*}
        &\frac{1}{p}\int_\Omega u_\varepsilon^p(\cdot,t) +\int_\Omega \frac{|\nabla v_\varepsilon(\cdot,t)|^{2p+2}}{v_\varepsilon^{2p+1}(\cdot,t)} +\frac{p-1}{4} \int_0^t \int_\Omega u_\varepsilon^{p-1}v_\varepsilon |\nabla u_\varepsilon|^2 + \frac{c_3}{2} \int_0^t \int_\Omega \frac{|\nabla v_\varepsilon|^{2p+4}}{v_\varepsilon^{2p+3}}  \notag \\
        &\leq c_8 \int_0^t\int_\Omega u_\varepsilon v_\varepsilon+c_3 \int_0^t \int_\Omega v_\varepsilon |\nabla v_\varepsilon|^2 +\frac{1}{p}\int_\Omega (u_0+1)^p + \int_\Omega \frac{|\nabla v_0|^{2p+2}}{v_0^{2p+1}} \leq c_9 
    \end{align*}
     for all $t\in (0,T_{\rm max, \varepsilon})$ and $\varepsilon \in (0,1)$, which implies \eqref{Lp-1}, \eqref{Lp-2} and \eqref{Lp-3} for $p> p_0$. Now, we only need to prove \eqref{Lp-3} with $1<p \leq p_0$. Indeed, by applying Young's inequality and employing \eqref{Lp-3} with $p=p_0+1$ and \eqref{L3-3}, we obtain 
     \begin{align*}
         \int_0^{T_{\rm max, \varepsilon}}\int_\Omega \frac{|\nabla v_\varepsilon|^{2p+4}}{v_\varepsilon^{2p+3}}  &\leq \int_0^{T_{\rm max, \varepsilon}}\int_\Omega \frac{|\nabla v_\varepsilon|^{2p_0+6}}{v_\varepsilon^{2p_0+5}} +\int_0^{T_{\rm max, \varepsilon}}\int_\Omega \frac{|\nabla v_\varepsilon|^{6}}{v_\varepsilon^{5}}   \notag \\
         &\leq c_{10} \qquad \text{for all }\varepsilon \in (0,1),
     \end{align*}
   with some $c_{10}>0$,  which completes the proof.
    
\end{proof}

\section{\texorpdfstring{$L^\infty$ boundedness of $u_\varepsilon$ and proof of the main results}{boundedness for u_epsilon}} \label{S5}

The purpose of this section is to derive an $L^\infty$ estimate for $u_\varepsilon$ and then apply it to establish the global solvability of weak solutions for the system \eqref{1}. Before going further, let us begin this section with an $L^\infty$ estimate for $\nabla v_\varepsilon$ in the following lemma.
\begin{lemma} \label{Unif-v}
  Let $\Omega \subset \mathbb{R}^n$ with $n \in \left \{ 3,4,5 \right \}$ be a bounded convex domain with smooth boundary and $\alpha \in \left [1, \frac{5}{2}-\frac{n}{4} \right )$. Then there exists $C>0$ such that 
 \begin{align*}
     \left \| \nabla v_\varepsilon(\cdot,t) \right \|_{L^\infty(\Omega)} \leq C \qquad \text{for all }t \in (0,T_{\rm max, \varepsilon}) \quad \text{and } \varepsilon \in (0,1).
 \end{align*}
\end{lemma}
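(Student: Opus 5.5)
We will use the standard semigroup argument for the Neumann heat equation applied to the second equation of \eqref{2}, with the $L^p$ bounds of Lemma \ref{Lp} supplying the forcing. Write the variation-of-constants formula on unit time windows: for $t\in(0,T_{\rm max,\varepsilon})$ set $t_0:=(t-1)_+$ and
\begin{align*}
v_\varepsilon(\cdot,t)=e^{(t-t_0)\Delta}v_\varepsilon(\cdot,t_0)-\int_{t_0}^t e^{(t-s)\Delta}\big(u_\varepsilon v_\varepsilon\big)(\cdot,s)\,ds .
\end{align*}
We then take gradients and use the well-known $L^p$--$L^q$ smoothing estimates for the Neumann heat semigroup. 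In particular, we need
\begin{align*}
\|\nabla e^{\tau\Delta}\varphi\|_{L^\infty(\Omega)}\le c\,(1+\tau^{-\frac12-\frac{n}{2q}})\|\varphi\|_{L^q(\Omega)},
\end{align*}
together with $\|\nabla e^{\tau\Delta}\varphi\|_{L^\infty}\le c\|\nabla\varphi\|_{L^\infty}$.

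First, the maximum principle (\eqref{local-4}) gives $\|v_\varepsilon(\cdot,t)\|_{L^\infty}\le\|v_0\|_{L^\infty}$ for all $t$. Second, we fix $q>n$, for instance $q=2n$, and apply Lemma \ref{Lp} with $p=q$. This yields $\|u_\varepsilon(\cdot,s)v_\varepsilon(\cdot,s)\|_{L^q}\le \|v_0\|_{L^\infty}\|u_\varepsilon(\cdot,s)\|_{L^q}\le c$ uniformly in $s$ and $\varepsilon$. The Duhamel integral is then bounded by
\begin{align*}
c\int_0^1\big(1+\sigma^{-\frac12-\frac{n}{2q}}\big)\,d\sigma<\infty,
\end{align*}
since $\frac12+\frac{n}{2q}<1$.

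Third, we treat the homogeneous term. For $t\le1$ we have $t_0=0$, and $\|\nabla e^{t\Delta}v_0\|_{L^\infty}\le c\|\nabla v_0\|_{L^\infty}$, which is finite by \eqref{ini}. For $t>1$ we have $t-t_0=1$, and $\|\nabla e^{\Delta}v_\varepsilon(\cdot,t_0)\|_{L^\infty}\le c\|v_\varepsilon(\cdot,t_0)\|_{L^\infty}\le c\|v_0\|_{L^\infty}$. Combining the three steps gives a bound on $\|\nabla v_\varepsilon(\cdot,t)\|_{L^\infty}$ that is independent of $t$ and $\varepsilon$.

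The only point needing care is the uniform-in-time character of the estimate. Restarting the Duhamel formula at $t_0=(t-1)_+$ avoids integrating the singular kernel over long intervals, and the uniform $L^q$ bound with $q>n$ from Lemma \ref{Lp} is exactly what makes the time singularity integrable. Convexity of $\Omega$ is not needed for this step.
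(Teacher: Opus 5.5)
Your proposal is correct and follows essentially the same route as the paper: the $L^{2n}$ bound from Lemma \ref{Lp}, combined with the $L^q$--$L^\infty$ gradient smoothing of the Neumann heat semigroup (with the integrable singularity $\sigma^{-3/4}$), controls the Duhamel term. The only difference is how you get uniformity in time: the paper writes $v_{\varepsilon t}=(\Delta-1)v_\varepsilon-(u_\varepsilon v_\varepsilon-v_\varepsilon)$ and uses the factor $e^{-(t-s)}$ of $e^{(t-s)(\Delta-1)}$ over all of $(0,t)$, whereas you restart the Duhamel formula at $(t-1)_+$.
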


\begin{proof}
    Applying \eqref{Lp-1} in Lemma \ref{Lp} with $p=2n$, we can find $c_1>0$ such that 
    \begin{align*}
        \int_\Omega u_\varepsilon^{2n}(\cdot,t) \leq c_1 \qquad \text{for all }t \in (0,T_{\rm max, \varepsilon}) \quad \text{and } \varepsilon \in (0,1).
    \end{align*}
    According to the smoothing properties of the Neumann heat semigroup theory (\cite{Winkler-2010}[Lemma 1.3]), we can find $c_2>0$ and $c_3>0$ such that 
    \begin{align*}
    \left \| \nabla v_\varepsilon(\cdot,t) \right \|_{L^\infty(\Omega)} &= \left \| \nabla e^{t(\Delta-1)} v_0 - \int_0^t \nabla e^{(t-s)(\Delta-1)} \left \{ u_\varepsilon(\cdot,s) v_\varepsilon(\cdot,s) - v_\varepsilon(\cdot,s) \right \} \right \|_{L^\infty(\Omega)} \, ds\notag \\
    &\leq c_2 \left \|v_0 \right \|_{W^{1, \infty}(\Omega)} +c_2 \int_0^t \left ( 1+(t-s)^{-\frac{3}{4}} \right ) e^{-(t-s)} \left \| u_\varepsilon(\cdot,s) v_\varepsilon(\cdot,s) - v_\varepsilon(\cdot,s) \right \|_{L^{2n}(\Omega)} \, ds \notag\\
    &\leq c_2 \left \|v_0 \right \|_{W^{1, \infty}(\Omega)} +c_2 \left \| v_0 \right \|_{L^\infty(\Omega)} \left ( c_1+ |\Omega|^{\frac{1}{2n}} \right )  \int_0^t \left ( 1+(t-s)^{-\frac{3}{4}} \right ) e^{-(t-s)}  \, ds \notag \\
    &\leq c_3 \qquad \text{for all }t \in (0,T_{\rm max, \varepsilon}) \quad \text{and } \varepsilon \in (0,1),
    \end{align*}
    which finishes the proof.
\end{proof}

Thanks to the $L^p$ bounds for $u_\varepsilon$ for arbitrary $p>1$ and the key inequality established in Lemma~\ref{Moser}, we can adapt the Moser iteration procedure---originally developed for the two-dimensional case in \cite{Winkler-2024}---to arbitrary dimensions in order to prove the uniform boundedness of $u_\varepsilon$ in the following lemma.

\begin{lemma} \label{Linf}
  Let $\Omega \subset \mathbb{R}^n$ with $n \in \left \{ 3,4,5 \right \}$ be a bounded convex domain with smooth boundary. Then one can find $C>0$ such that 
\begin{align} \label{Linf-1}
    \left \| u_\varepsilon(\cdot,t) \right \|_{L^\infty(\Omega)} \leq C \qquad \text{for all }t\in (0,T_{\rm max, \varepsilon}) \quad \text{and } \varepsilon \in (0,1).
\end{align}
\end{lemma}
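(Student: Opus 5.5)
We run a Moser iteration along the exponents $p_k := 2^k p_*$, for a fixed $p_*>n$, combining the $L^p$ bounds of Lemma \ref{Lp} with the inequality of Lemma \ref{Moser}. The aim is a recursion of the form in Lemma \ref{MW} for
\[
M_k := \max\Bigl\{1,\ \sup_{t\in(0,T_{\rm max,\varepsilon})} \int_\Omega u_\varepsilon^{p_k}(\cdot,t)\Bigr\}.
\]
Since $M_{k-1}^{2}=\sup_t(\int_\Omega u_\varepsilon^{p_k/2})^2$, Lemma \ref{MW} gives $\liminf_k M_k^{1/2^k}\le C$. Hence $\|u_\varepsilon(\cdot,t)\|_{L^\infty(\Omega)} \le \liminf_k M_k^{1/p_k}\le C^{1/p_*}$, uniformly in $t$ and $\varepsilon$. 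The base quantity $M_0$ is controlled by Lemma \ref{Lp} with $p=p_*$.

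\textbf{Differential inequality.} Test the first equation of \eqref{2} with $u_\varepsilon^{p-1}$, with $p=p_k$. As in \eqref{Lp.4}, this gives
\[
\frac1p\frac{d}{dt}\int_\Omega u_\varepsilon^p+\frac{p-1}{2}\int_\Omega u_\varepsilon^{p-1}v_\varepsilon|\nabla u_\varepsilon|^2\le \frac{(p-1)\chi^2}{2}\int_\Omega u_\varepsilon^{p+2\alpha-3}v_\varepsilon|\nabla v_\varepsilon|^2+\ell\int_\Omega u_\varepsilon^pv_\varepsilon .
\]
Lemma \ref{Unif-v} makes $\nabla v_\varepsilon$ bounded, and \eqref{local-4} makes $v_\varepsilon$ bounded. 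Since $2\alpha-3\le 1$ (note $\alpha<2$ here), Young's inequality bounds the right-hand side by $c\,p\int_\Omega u_\varepsilon^{p+1}v_\varepsilon + c\,p\int_\Omega u_\varepsilon v_\varepsilon$, with $c$ independent of $p$.

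\textbf{Absorption and the extra term.} Add $\int_\Omega u_\varepsilon^p$ to both sides and estimate it by $\int_\Omega u_\varepsilon^{p+1}v_\varepsilon$ plus a harmless term. This needs a lower bound on $v_\varepsilon$, which is not available, so the better route is to run the argument with $v_\varepsilon$ kept as a weight. We therefore apply Lemma \ref{Moser} with $f=u_\varepsilon$, $g=v_\varepsilon$ and $\eta\sim 1/p$, chosen so the gradient term is absorbed into $\frac{p-1}{2}\int_\Omega u_\varepsilon^{p-1}v_\varepsilon|\nabla u_\varepsilon|^2$. This leaves two terms:
\begin{itemize}
\item $\left(\int_\Omega u_\varepsilon^{p/2}\right)^{\frac{2(p+1)}{p}}\int_\Omega\frac{|\nabla v_\varepsilon|^{2n+2}}{v_\varepsilon^{2n+1}}$, where the last factor is time-integrable by \eqref{Lp-3};
\item $K p^{3\kappa}\left(\int_\Omega u_\varepsilon^{p/2}\right)^2\int_\Omega u_\varepsilon v_\varepsilon$, where the last factor is time-integrable by \eqref{local-5}.
\end{itemize}
Because both time weights are integrable on $(0,T_{\rm max,\varepsilon})$ uniformly in $\varepsilon$, we can drop the dissipation and integrate in time without needing any decay term. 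The result is
\[
\sup_t\int_\Omega u_\varepsilon^{p}\le \int_\Omega(u_0+1)^p + C p^{a}\,M_{k-1}^{2+\frac{2}{p_{k-1}}}+C p^a M_{k-1}^2 .
\]
With $p=2^kp_*$ this reads $M_k\le a^k M_{k-1}^{2+d/2^k}+b^{2^k}$ for suitable $a,b,d$. The initial-data term is at most $(\|u_0\|_\infty+1)^{p_*2^k}$, which is the $b^{2^k}$ term. Lemma \ref{MW} then concludes.

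\textbf{Main obstacle.} The main difficulty is tracking exponents in this step. The gradient term in Lemma \ref{Moser} carries the power $\frac{2(p+1)}{p}=2+\frac{2}{p}$ rather than $2$. This is exactly why Lemma \ref{MW} allows the perturbation $2+d/2^k$, with $d=2/p_*$. We also have to check that the $p$-dependence $\eta^{-\kappa}p^{2\kappa}$ with $\eta\sim1/p$ stays polynomial in $p$, so it is of the form $a^k$. One further point: the constants $\sup_\varepsilon\int_0^{T_{\rm max,\varepsilon}}\int_\Omega |\nabla v_\varepsilon|^{2n+2}/v_\varepsilon^{2n+1}$ and the $L^{p_*}$ bound are used once for all $k$, so they do not degrade along the iteration.
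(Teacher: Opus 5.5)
Your proposal is correct and follows essentially the same route as the paper. Both arguments test with $u_\varepsilon^{p_k-1}$, use Lemma \ref{Unif-v} to bound $|\nabla v_\varepsilon|$, absorb $\int_\Omega u_\varepsilon^{p_k+1}v_\varepsilon$ via Lemma \ref{Moser}, drop the dissipation and integrate in time using \eqref{Lp-3} (with $p=n-1$) and \eqref{local-5}, and close with Lemma \ref{MW}; the paper takes $p_k=2^kn$ with Moser parameter $2n$. The differences are harmless: the paper takes $\eta=\frac{1}{4c_3}$ independent of $p$, whereas your $\eta\sim 1/p$ only turns $p^{2\kappa+2}$ into $p^{3\kappa+2}$, and your supremum over all of $(0,T_{\rm max,\varepsilon})$ instead of $(0,T)$ is fine because finiteness of $M_k$ propagates inductively from $M_0$.
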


\begin{proof}
    As a result of Lemma \ref{Unif-v}, we can find $c_1>0$ such that 
    \begin{align} \label{Linf.1}
        \left \| \nabla v_\varepsilon(\cdot,t) \right \|_{L^\infty(\Omega)} \leq c_1 \qquad \text{for all }t\in (0,T_{\rm max, \varepsilon}) \quad \text{and }\varepsilon \in (0,1).
    \end{align}
    For integers $k \geq 1$, we let $p_k:= 2^k n $ and set 
    \begin{align}\label{Linf.2}
        M_{k, \varepsilon} (T):= 1 + \sup_{t\in (0,T)} \int_\Omega u^{p_k}_\varepsilon (\cdot,t), \qquad  T \in (0,T_{\rm max,\varepsilon}), \, k \geq 1, \, \varepsilon \in (0,1),
    \end{align}
    then we see that $M_{k, \varepsilon} (T)$ is finite. Moreover, that \eqref{Lp-1} in Lemma \ref{Lp} ensures the existence of $c_2>0$ such that 
    \begin{align}\label{Linf.3}
        M_{0, \varepsilon}(T) \leq c_2 \qquad  \text{for all } T \in (0,T_{\rm max,\varepsilon}) \quad \text{ and }\varepsilon \in (0,1).
    \end{align}
    Testing the first equation of \eqref{2} by $p_k u_\varepsilon^{p_k-1}$, applying Young's inequality, using \eqref{Linf.1} and noting that $1< p_k+2\alpha-3 < p_k+1$, we obtain 
    \begin{align}\label{Linf.4}
        \frac{d}{dt} \int_\Omega u_\varepsilon^{p_k} &\leq - \frac{p_k(p_k-1)}{2} \int_\Omega u_\varepsilon^{p_k-1} v_\varepsilon |\nabla u_\varepsilon|^2 +\frac{p_k(p_k-1)\chi^2}{2} \int_\Omega u_\varepsilon^{p_k+2\alpha-3} v_\varepsilon |\nabla v_\varepsilon|^2 + p_k\ell \int_\Omega u_\varepsilon^{p_k}v_\varepsilon \notag \\
        &\leq - \frac{p_k^2}{4}\int_\Omega u_\varepsilon^{p_k-1} v_\varepsilon |\nabla u_\varepsilon|^2 + \frac{p_k^2c_1^2\chi^2}{2} \int_\Omega u_\varepsilon^{p_k+2\alpha-3} v_\varepsilon + p_k\ell \int_\Omega u_\varepsilon^{p_k}v_\varepsilon \notag \\
        &\leq  -\frac{p_k^2}{4}\int_\Omega u_\varepsilon^{p_k-1} v_\varepsilon |\nabla u_\varepsilon|^2 + c_3 p_k^2 \int_\Omega u_\varepsilon^{p_k+1} v_\varepsilon + c_3 p_k^2  \int_\Omega u_\varepsilon v_\varepsilon,
    \end{align}
    where $c_3= \frac{c_1^2\chi^2}{2}+\ell$. Since $p_*:=2n>n$, we can make use of Lemma \ref{Moser} with $p=p_k$ to deduce that 
    \begin{align}\label{Linf.5}
        \int_\Omega u^{p_k+1}_\varepsilon v_\varepsilon &\leq \frac{1}{4c_3} \int_\Omega u_\varepsilon^{p_k-1} v_\varepsilon |\nabla u_\varepsilon|^2 +\frac{1}{4c_3} \cdot \left \{ \int_\Omega u_\varepsilon^{\frac{p_k}{2}} \right \}^{\frac{2(p_k+1)}{p_k}} \cdot \int_\Omega \frac{|\nabla v_\varepsilon|^{2n+2}}{v_\varepsilon^{2n+1} }  \notag \\
        &\quad+K\cdot (4c_3)^\kappa \cdot p_k^{2\kappa} \cdot \left \{ \int_\Omega u_\varepsilon^{\frac{p_k}{2}} \right \}^2 \cdot \int_\Omega u_\varepsilon v_\varepsilon \qquad \text{for all }t\in (0,T_{\rm max, \varepsilon}),
    \end{align}
    where $\kappa = \kappa(p_*)>0$ and $K=K(p_*)>0$. Recalling the definition of $(p_j)_{j \geq 1}$  and \eqref{Linf.2}, and using \eqref{Linf.4} and \eqref{Linf.5}, we deduce that 
    \begin{align}\label{Linf.6}
         \frac{d}{dt} \int_\Omega u_\varepsilon^{p_k} &\leq \frac{p_k^2}{4} \cdot \left \{ \int_\Omega u_\varepsilon^{\frac{p_k}{2}} \right \}^{\frac{2(p_k+1)}{p_k}} \cdot \int_\Omega \frac{|\nabla v_\varepsilon|^{2n+2}}{v_\varepsilon^{2n+1} } +c_3p_k^2 \int_\Omega u_\varepsilon v_\varepsilon  \notag \\
        &\quad+K 4^\kappa  c_3^{\kappa+1}  p_k^{2\kappa+2} \cdot \left \{ \int_\Omega u_\varepsilon^{\frac{p_k}{2}} \right \}^2 \cdot \int_\Omega u_\varepsilon v_\varepsilon \notag \\
        &\leq \frac{p_k^2}{4} M_{k-1, \varepsilon}^{\frac{2(p_k+1)}{p_k}}(T)  \int_\Omega \frac{|\nabla v_\varepsilon|^{2n+2}}{v_\varepsilon^{2n+1} } +c_3 p_k^{2} \int_\Omega u_\varepsilon v_\varepsilon  \notag \\
          &\quad+K 4^\kappa  c_3^{\kappa+1}  p_k^{2\kappa+2} M_{k-1, \varepsilon}^{2}(T)  \int_\Omega u_\varepsilon v_\varepsilon \notag \\
          &\leq c_4p_k^{2\kappa+2} M_{k-1, \varepsilon}^{\frac{2(p_k+1)}{p_k}}(T) \left \{ \int_\Omega \frac{|\nabla v_\varepsilon|^{2n+2}}{v_\varepsilon^{2n+1} } +\int_\Omega u_\varepsilon v_\varepsilon \right \} 
    \end{align}
  for all $t\in (0,T)$, any $T\in (0,T_{\rm max,\varepsilon})$ and each $\varepsilon \in (0,1)$,  where $c_4= \max \left \{ \frac{1}{4}, K 4^\kappa c_3^{\kappa+1} , c_3 \right \}$. Applying \eqref{Lp-3} with $p:= n-1$ and \eqref{local-5} yields $c_5>0$ such that 
  \begin{align*}
      \int_0^{T_{\rm max,\varepsilon }} \int_\Omega \frac{|\nabla v_\varepsilon|^{2n+2}}{v_\varepsilon^{2n+1} }+ \int_0^{T_{\rm max,\varepsilon }}\int_\Omega u_\varepsilon v_\varepsilon \leq c_5 \qquad \text{for all }\varepsilon \in (0,1).
  \end{align*}
  This, together with an integration of \eqref{Linf.6} in time implies that for all $t\in (0,T)$, $T\in (0,T_{\rm max, \varepsilon})$ and $\varepsilon \in (0,1)$, 
  \begin{align*}
      \int_\Omega u^{p_k}(\cdot,t) &\leq  \int_\Omega (u_0+\varepsilon)^{p_k} +c_4c_5 p_k^{2\kappa+2}    M_{k-1, \varepsilon}^{\frac{2(p_k+1)}{p_k}}(T) \notag \\
      &\leq |\Omega| \left \| u_0+1 \right \|_{L^\infty(\Omega)}^{p_k}+c_4c_5 p_k^{2\kappa+2}    M_{k-1, \varepsilon}^{\frac{2(p_k+1)}{p_k}}(T).
  \end{align*}
  In view of \eqref{Linf.2}, this entails that 
  \begin{align*}
      M_{k, \varepsilon}(T) &\leq  1+|\Omega| \left \| u_0+1 \right \|_{L^\infty(\Omega)}^{p_k}+c_4c_5 p_k^{2\kappa+2}    M_{k-1, \varepsilon}^{\frac{2(p_k+1)}{p_k}}(T) \\
      &\leq a^{2^k} +b^k       M_{k-1, \varepsilon}^{2+\frac{d}{2^k}}(T) \qquad \text{for all }T\in (0,T_{\rm max, \varepsilon}) \quad \text{and }\varepsilon \in (0,1)
  \end{align*}
  with 
  \begin{align*}
      a:= \left (1+ |\Omega| \left \| u_0+1 \right \|_{L^\infty(\Omega)} \right )^n, \qquad b:= \left ( \max \left \{2c_4c_5n,1 \right \} \right )^{2\kappa+2} \qquad \text{and } d:= \frac{2}{n},
  \end{align*}
  because by the definition of $(p_k)_{k \geq 1}$,
  \begin{align*}
       1+|\Omega| \left \| u_0+1 \right \|_{L^\infty(\Omega)}^{p_k} \leq \left (   1+|\Omega| \left \| u_0+1 \right \|_{L^\infty(\Omega)} \right )^{p_k} = \left \{ \left (   1+|\Omega| \left \| u_0+1 \right \|_{L^\infty(\Omega)} \right )^{n} \right \}^{2^k}
  \end{align*}
  and 
  \begin{align*}
      c_4c_5p_k^{2\kappa+2} = c_4c_5 ( 2^k n)^{2\kappa+2}  \leq  \left \{ \left ( \max \left \{2 c_4c_5n,1 \right \} \right )^k \right \}^{2\kappa+2} =   \left \{ \left ( \max \left \{2 c_4c_5n,1 \right \} \right )^{2\kappa+2} \right \}^{k}
  \end{align*}
  as well as 
  \begin{align*}
      \frac{2(p_k+1)}{p_k} = 2+\frac{2}{p_k}=2+ \frac{2}{n} 2^{-k}.
  \end{align*}
  Now, applying Lemma \ref{MW} and using \eqref{Linf.3}, we arrive at 
  \begin{align*}
      \left \| u_\varepsilon (\cdot,t) \right \|_{L^\infty(\Omega)}^n &= \liminf_{k \to \infty} \left \{ \int_\Omega u_\varepsilon^{p_k}(\cdot,t) \right \}^{\frac{1}{2^k}} \notag \\
      &\leq \liminf_{k \to \infty }M_{k,\varepsilon}^{\frac{1}{2^k}}(T) \notag \\
      &\leq \left ( 2\sqrt{2}b^3 a^{1+\frac{d}{2}} c_2 \right )^{e^{\frac{d}{2}}} 
  \end{align*}
  for all $t\in (0,T)$, any $T \in (0,T_{\rm max, \varepsilon})$ and each $\varepsilon \in (0,1)$. Finally, taking $T \nearrow T_{\rm max, \varepsilon}$, we deduce \eqref{Linf-1}. The proof is now complete.  
\end{proof}

As a result of the above lemma, we demonstrate in the following lemma that the classical solutions of \eqref{2} exist globally in time.
    \begin{lemma} \label{Global}
        Under the assumption of Theorem \ref{thm1}, then for each $\varepsilon \in (0,1)$ we have $T_{\rm max, \varepsilon} = \infty$.
    \end{lemma}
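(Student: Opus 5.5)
The plan is a contradiction argument built on the extensibility criterion \eqref{local-2} of Lemma \ref{local} together with the uniform bound of Lemma \ref{Linf}. Fix $\varepsilon\in(0,1)$ and suppose that $T_{\rm max,\varepsilon}<\infty$. Then \eqref{local-2} gives
\[
\limsup_{t\to T_{\rm max,\varepsilon}}\|u_\varepsilon(\cdot,t)\|_{L^\infty(\Omega)}=\infty .
\]

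Lemma \ref{Linf} supplies a constant $C>0$, independent of $t$ and $\varepsilon$, such that $\|u_\varepsilon(\cdot,t)\|_{L^\infty(\Omega)}\le C$ for all $t\in(0,T_{\rm max,\varepsilon})$. This contradicts the blow-up above, so $T_{\rm max,\varepsilon}=\infty$.

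The only point to check is that the hypotheses of Lemma \ref{Linf} match those of Theorem \ref{thm1}. Lemma \ref{Linf} relies on Lemma \ref{Unif-v} and Lemma \ref{Lp}, and these in turn use Lemma \ref{L4} and Lemma \ref{L3}. Those lemmas require a bounded convex smooth domain in $\mathbb{R}^n$ with $n\in\{3,4,5\}$, and $\alpha\in[1,\frac52-\frac n4)$. For $n\ge3$ this interval lies inside $[1,\frac32+\frac1n)$, so Lemma \ref{L3} and Lemma \ref{D} also apply.

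All of these conditions are exactly the assumptions of Theorem \ref{thm1}. Moreover, every constant in that chain was derived on $(0,T_{\rm max,\varepsilon})$ without any finiteness assumption on $T_{\rm max,\varepsilon}$. I therefore expect no real obstacle: the argument is a direct application of the extensibility criterion.
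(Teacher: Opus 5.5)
Your proof is correct and follows the paper's own argument: the uniform bound of Lemma \ref{Linf} on $(0,T_{\rm max,\varepsilon})$ is incompatible with the blow-up alternative \eqref{local-2}, so $T_{\rm max,\varepsilon}=\infty$. Your check that $[1,\frac52-\frac n4)\subset[1,\frac32+\frac1n)$ is a worthwhile addition the paper leaves implicit; it is equivalent to $(n-2)^2\ge 0$.
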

    \begin{proof}
        This is an immediate consequence of Lemma \ref{Linf} and the extensibility property of solution \eqref{local-2}.
    \end{proof}

Moreover, the $L^\infty$ bounds for $u_\varepsilon$ established in Lemma \ref{Linf} enables us to derive a lower bound for $v_\varepsilon$ by means of simple comparison argument.

 \begin{lemma} \label{lowv}
       Under the assumption of Theorem \ref{thm1}, then for all $T\in (0,\infty)$ there exists $C(T)>0$ such that 
       \begin{align} \label{lowv-1}
           v_\varepsilon(x,t) \geq C(T) \qquad \text{for all }x\in \Omega, t\in (0,T) \, \text{and }\varepsilon \in (0,1).
       \end{align}
 \end{lemma}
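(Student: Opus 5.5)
The argument is a parabolic comparison for the second equation of \eqref{2}, using the uniform sup-norm bound of Lemma \ref{Linf}. By Lemma \ref{Linf} there is $c_1>0$, independent of $\varepsilon$, with $u_\varepsilon \leq c_1$ in $\Omega\times(0,T_{\rm max,\varepsilon})$, and by Lemma \ref{Global} this holds on $\Omega\times(0,\infty)$. Hence $v_\varepsilon$ satisfies
\begin{align*}
v_{\varepsilon t} = \Delta v_\varepsilon - u_\varepsilon v_\varepsilon \geq \Delta v_\varepsilon - c_1 v_\varepsilon \qquad \text{in } \Omega\times(0,\infty),
\end{align*}
with $\partial_\nu v_\varepsilon = 0$ on $\partial\Omega$ and $v_\varepsilon(\cdot,0)=v_0$. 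Here we use that $v_\varepsilon>0$, so that $-u_\varepsilon v_\varepsilon \geq -c_1 v_\varepsilon$.

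For the comparison function, set $\delta:=\min_{\Bar{\Omega}} v_0$, which is positive by \eqref{ini} since $v_0$ is continuous and positive on the compact set $\Bar{\Omega}$. Define $\underline{v}(x,t):=\delta e^{-c_1 t}$. This function solves $\underline{v}_t=\Delta\underline{v}-c_1\underline{v}$ with homogeneous Neumann data and satisfies $\underline{v}(\cdot,0)\leq v_0$.

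The difference $w:=v_\varepsilon-\underline{v}$ then satisfies
\begin{align*}
w_t\geq \Delta w - c_1 w, \qquad \partial_\nu w=0, \qquad w(\cdot,0)\geq 0.
\end{align*}
Substituting $z:=e^{c_1 t}w$ gives $z_t\geq\Delta z$. The classical parabolic maximum principle (weak form, with Neumann boundary handled via Hopf's lemma or a standard $\eta$-perturbation) yields $z\geq 0$. Equivalently, one can test with $w_-$, which gives $\frac{d}{dt}\int_\Omega w_-^2\leq 2c_1\int_\Omega w_-^2$, and conclude by Gronwall's inequality. Either way $v_\varepsilon\geq \delta e^{-c_1 t}\geq \delta e^{-c_1 T}=:C(T)$ on $\Omega\times(0,T)$ for every $\varepsilon\in(0,1)$.

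The only point needing care is regularity at $t=0$, since \eqref{local-1} gives $C^{2,1}$ only in the open interior. For this I would use the $L^2$ testing version of the comparison on $(t_0,T)$ and let $t_0\searrow0$, relying on the continuity of $v_\varepsilon$ on $\Bar{\Omega}\times[0,T]$. This avoids pointwise boundary arguments entirely, so I do not expect any genuine obstacle.
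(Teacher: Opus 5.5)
Your proposal is correct and follows essentially the same route as the paper. The paper also uses the uniform bound from Lemma \ref{Linf}, made global by Lemma \ref{Global}, to get $v_{\varepsilon t}\geq \Delta v_\varepsilon - c_1 v_\varepsilon$, and then compares with $e^{-c_1 t}\inf v_0$ to obtain $C(T)=e^{-c_1T}\inf v_0$. The only difference is that you spell out the comparison principle (via $z=e^{c_1t}w$ or testing with $w_-$), which the paper invokes without detail.
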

\begin{proof}
    From Lemma \ref{Linf} and Lemma \ref{Global}, we can find $c_1>0$ such that 
    \begin{align*}
        \left \| u_\varepsilon (\cdot,t) \right \|_{L^\infty(\Omega)} \leq c_1 \qquad \text{for all }t>0 \quad \text{and } \varepsilon\in (0,1),
    \end{align*}
    which entails that 
    \begin{align*}
        v_{\varepsilon t} \geq \Delta v_\varepsilon -c_1 v_\varepsilon \quad \text{in } \Omega \times (0,\infty) \qquad \text{for all }\varepsilon \in (0,1).
    \end{align*}
    An application of comparison principle yields that 
    \begin{align} \label{lowv.1}
        v_\varepsilon(x,t) \geq e^{-c_1t } \inf_{x\in \Omega}v_0  \geq e^{-c_1T}\inf_{x\in \Omega}v_0 \qquad \text{for all }x\in \Omega, t\in (0,T) \, \text{and }\varepsilon \in (0,1),
    \end{align}
    which completes the proof.
\end{proof}

Thanks to the above estimates together with standard parabolic theory, we directly obtain the H\"older regularity results for the global solution of \eqref{2}.

\begin{lemma} \label{Schauder}
    Under the assumption of Theorem \ref{thm1}, for all $T>0$ and $\varepsilon \in (0,1)$, there exist $\theta_1= \theta_1(T) \in (0,1)$ and $C_1(T)>0$ such that 
    \begin{align} \label{Schauder-1}
        \left \| u_\varepsilon \right \|_{C^{\theta_1, \frac{\theta_1}{2}}(\Bar{\Omega}\times [0,T])} \leq C_1(T) \qquad \text{for all }\varepsilon \in (0,1)
    \end{align}
    and 
    \begin{align} \label{Schauder-2}
        \left \| v_\varepsilon \right \|_{C^{\theta_1, \frac{\theta_1}{2}}(\Bar{\Omega}\times [0,T])} \leq C_1(T) \qquad \text{for all }\varepsilon \in (0,1).
    \end{align}
    Moreover, for all $\tau >0$ and $T>\tau $, one can find  $\theta_2= \theta_2( \tau, T) \in (0,1)$ and $C_2(T)>0$ such that 
    \begin{align} \label{Schauder-3}
        \left \| v_\varepsilon \right \|_{C^{2+\theta_2, 1+\frac{\theta_2}{2}}(\Bar{\Omega}\times [\tau ,T])} \leq C_2(\tau, T) \qquad \text{for all }\varepsilon \in (0,1).
    \end{align}
\end{lemma}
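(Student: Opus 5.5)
The three estimates will be obtained from standard parabolic H\"older theory. The two inputs are the uniform bounds already available: Lemma \ref{Linf} together with Lemma \ref{Global} gives $\|u_\varepsilon\|_{L^\infty(\Omega\times(0,\infty))}\le c_1$; \eqref{local-4} gives $0<v_\varepsilon\le \|v_0\|_{L^\infty(\Omega)}$; and Lemma \ref{Unif-v} gives $\|\nabla v_\varepsilon\|_{L^\infty}\le c_2$. All constants are independent of $\varepsilon$.

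\textbf{The estimate \eqref{Schauder-2} for $v_\varepsilon$.} Write the second equation as $v_{\varepsilon t}=\Delta v_\varepsilon+h_\varepsilon$ with $h_\varepsilon:=-u_\varepsilon v_\varepsilon$. This right-hand side is bounded in $L^\infty(\Omega\times(0,T))$ uniformly in $\varepsilon$. The initial datum $v_0\in W^{1,\infty}(\Omega)$ is Lipschitz. Hence the H\"older estimates for linear parabolic equations with bounded forcing (e.g.\ Porzio--Vespri, or Lady\v{z}enskaja et al.) give $\theta_1$ and $C_1(T)$ with \eqref{Schauder-2}. Alternatively, one can use the variation-of-constants formula with semigroup smoothing, as in the proof of Lemma \ref{Unif-v}. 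This bounds $v_\varepsilon$ in $C^{1-\delta}$ in space, and Lipschitz continuity in time then follows from the mild formulation.

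\textbf{The estimate \eqref{Schauder-1} for $u_\varepsilon$.} Rewrite the first equation in divergence form,
\[
u_{\varepsilon t}=\nabla\cdot\bigl(D_\varepsilon(x,t)\,\nabla u_\varepsilon\bigr)+\nabla\cdot b_\varepsilon+f_\varepsilon ,
\]
with
\[
D_\varepsilon=u_\varepsilon v_\varepsilon,\qquad b_\varepsilon=-\chi u_\varepsilon^\alpha v_\varepsilon\nabla v_\varepsilon,\qquad f_\varepsilon=\ell u_\varepsilon v_\varepsilon .
\]
Both $b_\varepsilon$ and $f_\varepsilon$ are uniformly bounded on $[0,T]$. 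The difficulty is that the diffusion degenerates where $u_\varepsilon=0$, although the regularization keeps $u_\varepsilon>0$.

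To handle this, I would pass to $w_\varepsilon:=u_\varepsilon^2$, or equivalently view the equation as $u_t=\tfrac12\nabla\cdot(v_\varepsilon\nabla u_\varepsilon^2)+\dots$, which is of porous-medium type with $m=2$. By Lemma \ref{lowv}, the coefficient $v_\varepsilon$ satisfies $C(T)\le v_\varepsilon\le \|v_0\|_{L^\infty(\Omega)}$ on $[0,T]$. Thus the structure conditions of the H\"older regularity theory for degenerate parabolic equations (DiBenedetto, or Porzio--Vespri), namely $u\,\phi'(u)$-type degeneracy with bounded lower-order terms, hold uniformly in $\varepsilon$. That theory yields a modulus of H\"older continuity depending only on these bounds and on the Lipschitz norm of $u_0+\varepsilon$. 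This gives \eqref{Schauder-1}.

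\textbf{The interior estimate \eqref{Schauder-3}.} Fix a cutoff $\zeta\in C^\infty([0,T])$ with $\zeta=0$ near $0$ and $\zeta=1$ on $[\tau,T]$, and consider $\zeta v_\varepsilon$. It solves a heat equation with forcing $\zeta h_\varepsilon+\zeta' v_\varepsilon$. By \eqref{Schauder-1}--\eqref{Schauder-2}, this forcing is bounded in $C^{\theta_1,\theta_1/2}(\bar\Omega\times[0,T])$. Parabolic Schauder theory under homogeneous Neumann conditions then gives \eqref{Schauder-3} with $\theta_2=\theta_1$.

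\textbf{Main obstacle.} The hard step is \eqref{Schauder-1}, because of the porous-medium degeneracy. The key point is to check that the lower bound on $v_\varepsilon$ from Lemma \ref{lowv} and the $L^\infty$ bounds make DiBenedetto's hypotheses hold uniformly in $\varepsilon$, including up to the Neumann boundary.
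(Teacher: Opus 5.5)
Your proposal is correct and follows the paper's route. The $\varepsilon$-independent bounds from Lemmas \ref{Linf}, \ref{lowv}, \ref{Unif-v} and \eqref{local-4} verify the structure conditions of the Porzio--Vespri H\"older theory for \eqref{Schauder-1}--\eqref{Schauder-2}, and classical parabolic Schauder theory, fed by the H\"older continuity of $u_\varepsilon$, then yields \eqref{Schauder-3}. The one slip is in your side remark on the semigroup alternative: Lipschitz data give only $C^{1/2}$ (not Lipschitz) regularity in time up to $t=0$, though that is all \eqref{Schauder-2} requires.
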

\begin{proof}
    The proof of \eqref{Schauder-1} and \eqref{Schauder-2} can be proven by employing Lemmas \ref{lowv}, \ref{Unif-v}, \ref{Linf} and \eqref{local-4} and an application of the parabolic H\"older regularity theory as established in Theorem 1.3 in \cite{PV}. To prove \eqref{Schauder-3}, we just need to apply the classical parabolic Schauder theory (\cite{LSU}) together with the H\"older continuity of $u_\varepsilon$ established in \eqref{Schauder-1}.
\end{proof}

Combining all the estimates above, we may now apply a standard extraction argument to construct a limit function, which in fact yields a global weak solution as stated in Theorem~\ref{thm1}.

\begin{lemma} \label{conv}
      Under the assumption of Theorem \ref{thm1}, then there exist $(\varepsilon_j)_{j \in \mathbb{N}}\subset (0,1)$ as well as a pair $(u,v)$ of functions fulfilling \eqref{thm1-1} and \eqref{thm1-2}, such that $\varepsilon_j \searrow 0$ as $j \to \infty$, that $u \geq 0$ and $v>0$ in $\Bar{\Omega}\times [0, \infty)$, that
      \begin{align}
          &u_\varepsilon \to u \qquad \text{in } C_{loc}^0(\Bar{\Omega}\times [0, \infty) ), \label{conv-1}\\
          &v_\varepsilon \to v\qquad \text{in } C_{loc}^0(\Bar{\Omega}\times [0, \infty) ) \cap C^{2,1}(\Bar{\Omega}\times (0, \infty) ), \label{conv-2}\\
          &\nabla v_\varepsilon \stackrel{*}{\rightharpoonup} \nabla v \qquad \text{in }L^\infty \left ( \Omega \times (0,\infty) \right ) \label{conv-3}
      \end{align}
      as $\varepsilon = \varepsilon_j \searrow 0$, and that $(u,v)$ forms a global weak solution of \eqref{1} in the sense of Definition \ref{Def}.
 \end{lemma}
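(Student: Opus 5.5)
The plan is a standard Arzelà--Ascoli extraction followed by passage to the limit in the weak formulations of \eqref{2}. All the compactness comes from Lemma \ref{Schauder}. By \eqref{Schauder-1}--\eqref{Schauder-2} the families $(u_\varepsilon)$ and $(v_\varepsilon)$ are bounded in $C^{\theta_1,\theta_1/2}(\Bar{\Omega}\times[0,T])$ for every $T$. Choosing $T=k\in\mathbb{N}$ and taking a diagonal sequence, Arzelà--Ascoli gives \eqref{conv-1} and the first half of \eqref{conv-2}. For the $C^{2,1}$ convergence on compact subsets of $\Bar{\Omega}\times(0,\infty)$, I would use \eqref{Schauder-3} on $[\tau,T]$ with $\tau=1/k$, $T=k$, pass to a further diagonal subsequence, and use compactness of $C^{2+\theta_2,1+\theta_2/2}$ in $C^{2,1}$.

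Lemma \ref{Unif-v} gives a uniform $L^\infty(\Omega\times(0,\infty))$ bound on $\nabla v_\varepsilon$. By Banach--Alaoglu a subsequence converges weak-$*$, and the limit is identified as $\nabla v$ through the $C^0_{loc}$ convergence of $v_\varepsilon$. This yields \eqref{conv-3}.

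Sign properties and regularity follow directly. We have $u\ge0$ from $u_\varepsilon>0$. For $v>0$ on $\Bar\Omega\times[0,\infty)$, Lemma \ref{lowv} gives $v\ge C(T)>0$ on $[0,T]$. The bound \eqref{thm1-2} comes from passing to the limit in Lemma \ref{Linf}, \eqref{local-4} and Lemma \ref{Unif-v}, using lower semicontinuity of norms under uniform and weak-$*$ convergence. This also gives \eqref{thm1-1}.

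To verify Definition \ref{Def}, I multiply \eqref{2} by a test function $\phi$ as in the definition and integrate by parts. In the $u$-equation the diffusion term is $u_\varepsilon v_\varepsilon\nabla u_\varepsilon=\frac12 v_\varepsilon\nabla u_\varepsilon^2$, and the initial datum is $u_0+\varepsilon$.

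The main obstacle is the diffusion term $\frac12\int\int v_\varepsilon\nabla u_\varepsilon^2\cdot\nabla\phi$, because we have no pointwise convergence of $\nabla u_\varepsilon$. The way around it:

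\begin{itemize}
\item On $\mathrm{supp}\,\phi\subset\Bar\Omega\times[0,T]$, Lemma \ref{lowv} gives $v_\varepsilon\ge C(T)$.
\item Then \eqref{L3-2} yields $\int_0^T\int_\Omega|\nabla u_\varepsilon|^2\le C/C(T)$.
\item Since $u_\varepsilon$ is uniformly bounded, $\nabla u_\varepsilon^2=2u_\varepsilon\nabla u_\varepsilon$ is bounded in $L^2(\Omega\times(0,T))$.
\item Along a further subsequence, $\nabla u_\varepsilon^2\rightharpoonup z$ weakly in $L^2$, and $z=\nabla u^2$ because $u_\varepsilon^2\to u^2$ uniformly. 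This also shows $u^2\in L^2_{loc}([0,\infty);W^{1,2}(\Omega))$, hence \eqref{Def.2}.
\item Since $v_\varepsilon\nabla\phi\to v\nabla\phi$ uniformly, the product converges.
\end{itemize}

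The taxis term is handled similarly. We have $u_\varepsilon^\alpha v_\varepsilon\to u^\alpha v$ uniformly on compacts, and $\nabla v_\varepsilon\to\nabla v$ in $C^0_{loc}(\Bar\Omega\times(0,\infty))$. Dominated convergence then applies, with the uniform $L^\infty$ bound on $\nabla v_\varepsilon$ controlling the contribution near $t=0$. The remaining terms $\ell u_\varepsilon v_\varepsilon\phi$, $\int u_\varepsilon\phi_t$, the initial-data terms, and the whole $v$-equation \eqref{Def.4} pass to the limit by uniform convergence together with \eqref{conv-3}. This establishes \eqref{Def.3} and \eqref{Def.4}.
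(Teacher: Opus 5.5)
Your proposal is correct and follows essentially the same route as the paper. Both arguments use the H\"older bounds of Lemma \ref{Schauder} with Arzel\`a--Ascoli, the uniform bound on $\nabla v_\varepsilon$ from Lemma \ref{Unif-v}, and the lower bound of Lemma \ref{lowv} together with the dissipation estimate to control $\nabla u_\varepsilon^2$ before passing to the limit in the weak formulations. The one difference is minor: the paper gets only an $L^1$ bound via $|\nabla u_\varepsilon^2|\le v_\varepsilon|\nabla u_\varepsilon|^2+u_\varepsilon^2/v_\varepsilon$, whereas your $L^2$ bound on $\nabla u_\varepsilon^2$ (using the $L^\infty$ bound on $u_\varepsilon$) justifies the weak compactness and the identification $z=\nabla u^2$ more cleanly.
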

\begin{proof}
    In view of Lemma \ref{Lp}, Lemma \ref{lowv} and Young's inequality, for any $T>0$, there exists $C(T)>0$ such that 
    \begin{align*}
        \int_0^T \int_\Omega |\nabla u_\varepsilon^2| &\leq \int_0^T \int_\Omega v_\varepsilon |\nabla u_\varepsilon|^2 + \int_0^T \int_\Omega \frac{u^2_\varepsilon}{v_\varepsilon}  \notag \\
        &\leq C(T) \qquad \text{for all }\varepsilon \in (0,1),
    \end{align*}
  which yields that
    \begin{align*}
        \left ( u_\varepsilon \right )_{\varepsilon \in (0,1)} \text{is bounded in } L^1 \left ( (0,T); W^{1,1}(\Omega) \right ).
    \end{align*}
    Subsequently, we make use of Lemma \ref{Unif-v}, Lemma \ref{Schauder} and the Arzel\`a--Ascoli theorem to have a sequence $(\varepsilon_j)_{j \in \mathbb{N}}$ such that $\varepsilon_j \searrow 0$ as $j \to \infty$, and that with some function $u$ and $v$ satisfying \eqref{thm1-1} and \eqref{thm1-2}, besides \eqref{conv-1}, \eqref{conv-2}, \eqref{conv-3}, we have $\nabla u_\varepsilon^2 \to \nabla u^2$ in $L^1_{loc}\left ( \Bar{\Omega}\times [0, \infty) \right )$ as $\varepsilon= \varepsilon_j \searrow 0$, hence particularly implying that \eqref{Def.1} and \eqref{Def.2} hold. In addition, \eqref{Def.3} and \eqref{Def.4} can be accomplished on the basis of these convergence properties by taking $\varepsilon=\varepsilon_j \searrow 0$ in the respective weak formulations associated with \eqref{2}.
\end{proof}

We are now in position to prove our first main result.
\begin{proof}[Proof of Theorem \ref{thm1}]
Theorem \ref{thm1} is a immediate consequence of Lemma \ref{conv}.
\end{proof}

\section{Large time behavior of solutions} \label{S6}

We now establish the large-time behavior of solutions to \eqref{1}, which can be shown by adapting the proof of \cite[Theorem 1.2]{Wu2026}. For completeness, however, we include the full argument here. First of all, thanks to the uniform boundedness of $u_\varepsilon$ as established in Lemma \ref{Linf}, we can derive the following pointwise Harnack inequality where its proof can be found in Lemma 4.1 in \cite{Wu2026}.
\begin{lemma}\label{Hanack}
    Under the assumptions of Theorem \ref{1}, then there exists $\lambda>0$ such that 
    \begin{align*}
        v_\varepsilon(x,t) \geq \lambda \left \|v_\varepsilon(\cdot,t) \right \|_{L^\infty(\Omega)} \qquad \text{for all }x \in \Omega, t>0 \quad \text{and }\varepsilon \in (0,1).
    \end{align*}
\end{lemma}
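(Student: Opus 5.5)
We prove a parabolic Harnack inequality for $v_\varepsilon$, using only the uniform bounds already available. By Lemma \ref{Linf} and Lemma \ref{Global} there is $c_1>0$ with $0\le u_\varepsilon\le c_1$ in $\Omega\times(0,\infty)$ for all $\varepsilon\in(0,1)$. Hence $v_\varepsilon$ is a positive classical solution of the linear Neumann problem
\[
v_{\varepsilon t}=\Delta v_\varepsilon - c_\varepsilon(x,t)v_\varepsilon ,\qquad c_\varepsilon:=u_\varepsilon\in[0,c_1].
\]
The point is that this zeroth-order coefficient is bounded independently of $\varepsilon$. It is convenient to set $w_\varepsilon:=e^{c_1 t}v_\varepsilon$. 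Then $w_{\varepsilon t}-\Delta w_\varepsilon=(c_1-u_\varepsilon)w_\varepsilon\ge0$, so $w_\varepsilon$ is a supersolution of the heat equation. It also solves a linear equation with bounded coefficient, so $w_\varepsilon\le e^{c_1 t}$ times a heat solution as well.

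\textbf{Step 1: comparison with the Neumann heat semigroup on a unit time window.} Fix $t\ge1$ and put $t_0:=t-1$. Comparison on $[t_0,t]$ gives two-sided bounds:
\[
e^{-c_1(s-t_0)}\,e^{(s-t_0)\Delta}v_\varepsilon(\cdot,t_0)\;\le\; v_\varepsilon(\cdot,s)\;\le\; e^{(s-t_0)\Delta}v_\varepsilon(\cdot,t_0),\qquad s\in[t_0,t].
\]
The Neumann heat kernel $G$ of the smooth bounded domain satisfies $0<g_1\le G(x,y,1)\le g_2$ on $\bar\Omega\times\bar\Omega$. Using this, we obtain
\[
v_\varepsilon(x,t)\ge e^{-c_1}g_1\int_\Omega v_\varepsilon(\cdot,t_0)
\qquad\text{and}\qquad
\|v_\varepsilon(\cdot,t)\|_{L^\infty}\le g_2\int_\Omega v_\varepsilon(\cdot,t_0).
\]
Dividing yields $v_\varepsilon(x,t)\ge \lambda_1\|v_\varepsilon(\cdot,t)\|_{L^\infty(\Omega)}$ with $\lambda_1:=e^{-c_1}g_1/g_2$, uniformly in $x$, $t\ge1$ and $\varepsilon$. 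The upper bound follows from $u_\varepsilon\ge0$ via the comparison principle, and the lower bound from $u_\varepsilon\le c_1$.

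\textbf{Step 2: the initial layer $t\in(0,1)$.} Here we use the positivity of $v_0$ from \eqref{ini} instead. By \eqref{lowv.1}, $v_\varepsilon(x,t)\ge e^{-c_1}\inf_\Omega v_0$ for $t\le1$. By \eqref{local-4}, $\|v_\varepsilon(\cdot,t)\|_{L^\infty}\le\|v_0\|_{L^\infty}$. Thus the claim holds on this layer with $\lambda_2:=e^{-c_1}\inf v_0/\|v_0\|_{L^\infty}$, and we take $\lambda:=\min\{\lambda_1,\lambda_2\}$.

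\textbf{Main obstacle.} The delicate point is Step 1: we need the two-sided kernel bounds for the Neumann heat semigroup at time $1$ on a general smooth bounded domain, together with a clean comparison argument under homogeneous Neumann conditions. The kernel bounds are classical; the strict positivity $g_1>0$ follows from the strong maximum principle and continuity of $G(\cdot,\cdot,1)$ on the compact set $\bar\Omega\times\bar\Omega$. The comparison is standard, since both $v_\varepsilon$ and the semigroup solutions are classical with zero normal derivative. If one prefers to avoid kernel estimates, the alternative is to invoke Moser's parabolic Harnack inequality up to the boundary for Neumann problems with bounded potential. That is heavier, so I would use the kernel comparison route.
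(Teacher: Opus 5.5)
Your proof is correct and complete. The paper does not prove this lemma itself. It only refers to Lemma 4.1 of \cite{Wu2026}, and the essential input it points to is the $\varepsilon$-independent bound $0\le u_\varepsilon\le c_1$ from Lemma \ref{Linf} and Lemma \ref{Global}, which is exactly what you use.

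Your route is self-contained and elementary. On each window $[t-1,t]$ you place $v_\varepsilon(\cdot,t)$ between $e^{-c_1}e^{\Delta}v_\varepsilon(\cdot,t-1)$ and $e^{\Delta}v_\varepsilon(\cdot,t-1)$. Two-sided bounds on the Neumann heat kernel at time $1$ then control both $v_\varepsilon(x,t)$ and $\|v_\varepsilon(\cdot,t)\|_{L^\infty(\Omega)}$ by the same quantity $\int_\Omega v_\varepsilon(\cdot,t-1)$. This gives the same-time estimate directly. By contrast, a Moser-type parabolic Harnack inequality, which you mention as the alternative, only compares a supremum at earlier times with an infimum at later times. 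It would have to be combined with the monotonicity \eqref{local-4}, and it would also need a version valid up to the Neumann boundary.

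Your treatment of the initial layer $t\in(0,1)$ via \eqref{lowv.1}, \eqref{local-4} and $\inf_\Omega v_0>0$ from \eqref{ini} is correct, and it is needed, because the kernel bounds degenerate as the window length tends to $0$.

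Two minor remarks:
\begin{itemize}
\item Strict positivity of $G(\cdot,\cdot,1)$ up to $\partial\Omega$ uses the parabolic Hopf lemma, together with connectedness of $\Omega$, not only the interior strong maximum principle.
\item The auxiliary function $e^{c_1 t}v_\varepsilon$ in your preamble is never used. Its name also clashes with the $w_\varepsilon$ of Lemma \ref{S6L2}, so you can drop it.
\end{itemize}
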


As a consequence, we can deduce that $v_\varepsilon \in L^1 \left ((0, \infty);L^\infty(\Omega) \right )$ in the following lemma.
\begin{lemma}\label{S6L1}
     Under the assumptions of Theorem \ref{1}, then we have
     \begin{align*}
         \int_{t_0}^\infty \left \| v_\varepsilon(\cdot,t) \right \|_{L^\infty(\Omega)} dt \leq  \frac{\int_\Omega v_\varepsilon(\cdot,t_0)}{\lambda \int_\Omega u_0} \qquad \text{for all }t_0 \geq 0 \quad\text{and } \varepsilon \in (0,1),
     \end{align*}
     where $\lambda>0$ is given in Lemma \ref{Hanack}.
\end{lemma}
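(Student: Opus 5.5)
The estimate should follow by combining the mass identity \eqref{local-5} with the Harnack inequality of Lemma \ref{Hanack} and the lower mass bound in \eqref{local-3}. Fix $t_0\ge 0$ and $\varepsilon\in(0,1)$. By Lemma \ref{Global} we have $T_{\rm max,\varepsilon}=\infty$, so \eqref{local-5} reads
\begin{align*}
\int_{t_0}^\infty\int_\Omega u_\varepsilon v_\varepsilon \le \int_\Omega v_\varepsilon(\cdot,t_0).
\end{align*}
This is just the integrated form of $\frac{d}{dt}\int_\Omega v_\varepsilon=-\int_\Omega u_\varepsilon v_\varepsilon$ together with $v_\varepsilon>0$, and it is the only place where the dynamics enter.

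Next, I would bound the integrand from below pointwise in time. Lemma \ref{Hanack} gives $v_\varepsilon(x,t)\ge\lambda\|v_\varepsilon(\cdot,t)\|_{L^\infty(\Omega)}$ for all $x\in\Omega$. Multiplying by $u_\varepsilon(x,t)\ge0$ and integrating over $\Omega$ yields
\begin{align*}
\int_\Omega u_\varepsilon v_\varepsilon \ge \lambda\|v_\varepsilon(\cdot,t)\|_{L^\infty(\Omega)}\int_\Omega u_\varepsilon(\cdot,t)\ge \lambda\|v_\varepsilon(\cdot,t)\|_{L^\infty(\Omega)}\int_\Omega u_0,
\end{align*}
where the last step uses the left inequality in \eqref{local-3}. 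Since $\ell\ge0$, that inequality holds because the mass of $u_\varepsilon$ is nondecreasing and starts at $\int_\Omega(u_0+\varepsilon)\ge\int_\Omega u_0$.

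Integrating this lower bound over $(t_0,\infty)$ and comparing with the first display, then dividing by $\lambda\int_\Omega u_0>0$ (positive because $u_0\not\equiv0$ by \eqref{ini}), gives the claim.

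None of these steps is a real obstacle. The substantive work is already contained in Lemma \ref{Hanack}, whose constant $\lambda$ is independent of $t$ and $\varepsilon$. The only points to check are measurability of $t\mapsto\|v_\varepsilon(\cdot,t)\|_{L^\infty(\Omega)}$, which holds because $v_\varepsilon$ is continuous on $\Bar\Omega\times[0,\infty)$, and the applicability of \eqref{local-5} on the whole half-line, which follows from $T_{\rm max,\varepsilon}=\infty$.
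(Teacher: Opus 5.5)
Your proposal is correct and follows the paper's own proof exactly: it chains \eqref{local-5}, the Harnack inequality of Lemma \ref{Hanack}, and the lower mass bound in \eqref{local-3}, then divides by $\lambda\int_\Omega u_0>0$. Your extra remarks that $T_{\rm max,\varepsilon}=\infty$ (Lemma \ref{Global}) and $u_0\not\equiv 0$ are needed are accurate, and the paper uses them implicitly.
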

\begin{proof}
    From \eqref{local-5}, \eqref{local-3} and Lemma \ref{Hanack}, we infer that 
    \begin{align}
        \int_\Omega v_\varepsilon (\cdot,t_0) &\geq \int_{t_0}^\infty \int_\Omega u_\varepsilon(\cdot,t) v_\varepsilon(\cdot,t) \, dt \geq \lambda \int_{t_0}^\infty \left \|v_\varepsilon(\cdot,t) \right \|_{L^\infty} \int_\Omega u_\varepsilon(\cdot,t) \, dt \notag \\
        &\geq \lambda  \left \{\int_\Omega u_0 \right \} \int_{t_0}^\infty \left \|v_\varepsilon(\cdot,t) \right \|_{L^\infty} \, dt,
    \end{align}
    which finishes the proof.
\end{proof}

Using the preceding estimates, we can now establish that, along the sequence from Lemma~\ref{conv}, 
\(v_\varepsilon \to v\) in \( L^1 \left ((0, \infty);L^\infty(\Omega) \right )\).

\begin{lemma}\label{S6L2}
    Suppose that the assumptions of Theorem \ref{thm1} are satisfied and $(u_\varepsilon, v_\varepsilon)$ and $(\varepsilon_j)_{j \in \mathbb{N}}$ are given in Lemma \ref{conv}. Let 
    \begin{align*}
        L_\varepsilon := \int_0^\infty \left \| v_\varepsilon(\cdot,t) \right \|_{L^\infty(\Omega)}\, dt, \qquad \varepsilon \in (\varepsilon_j)_{j \in \mathbb{N}},
    \end{align*}
    and
    \begin{align*}
        \tau := \phi_\varepsilon(t):= \frac{1}{L_\varepsilon}\int_0^t \left \| v_\varepsilon(\cdot,s) \right \|_{L^\infty(\Omega)}\, ds, \qquad t\geq 0.
    \end{align*}
    Then, 
    \begin{align}\label{S6L2-1}
        w_\varepsilon(x,\tau ):= u_\varepsilon(x, \phi^{-1}_\varepsilon(\tau )), \qquad x\in \bar{\Omega}, \tau \in [0,1)
    \end{align}
    solves 
    \begin{equation}\label{S6L2-2}
        \begin{cases}
            w_{\varepsilon \tau } = \nabla \cdot \left ( a_\varepsilon(x, \tau ) w_\varepsilon \nabla w_\varepsilon \right )- \nabla \cdot \left (b_\varepsilon(x,\tau ) w_\varepsilon^\alpha \right ) +\ell a_\varepsilon(x,\tau ) w_\varepsilon, &\quad x\in \Omega, \tau \in (0,1), \\
            \nabla w_\varepsilon \cdot \nu =0 , &\quad x\in \Omega, \tau \in (0,1), \\
            w_\varepsilon(x,0)= u_0(x)+\varepsilon , &\quad x\in \Omega
        \end{cases}
    \end{equation}
    with 
    \begin{align*}
        a_\varepsilon(x, \tau ):= L_\varepsilon \cdot \frac{v_\varepsilon(x,t)}{ \left \| v_\varepsilon(\cdot,t) \right \|_{L^\infty(\Omega)}} \quad \text{and }\quad b_\varepsilon(x, \tau ):= L_\varepsilon \cdot \frac{v_\varepsilon(x,t)\nabla v_\varepsilon(x,t)}{\left \| v_\varepsilon(\cdot,t) \right \|_{L^\infty(\Omega)}}.
    \end{align*}
    Moreover, there exists $C>0$ such that
    \begin{align}\label{S6L2-3}
        \frac{1}{C} \leq a_\varepsilon(x,\tau ) \leq C \quad \text{and }\quad |b_\varepsilon(x,\tau )| \leq C \quad \text{for all } (x,\tau )\in \Omega \times (0,1) \quad \text{and }\quad \varepsilon \in (\varepsilon_j)_{j \in \mathbb{N}},
    \end{align}
    and
    \begin{align} \label{S6L2-4}
         L:= \int_0^\infty \left \| v(\cdot,t) \right \|_{L^\infty(\Omega)} \leq C
    \end{align}
    as well as
    \begin{align} \label{S6L2-5}
        L_\varepsilon \to L\quad \text{as }   \quad \varepsilon=\varepsilon_j \to 0.
    \end{align}
\end{lemma}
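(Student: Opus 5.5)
The plan has four parts: verify the transformed equation by a chain-rule computation, get the two-sided bounds on $a_\varepsilon$ from the Harnack inequality, bound $L_\varepsilon$ and $L$ uniformly, and prove $L_\varepsilon\to L$ from uniform tail control.

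\textbf{Step 1: the change of time variable.} By Lemma \ref{Global}, $(u_\varepsilon,v_\varepsilon)$ is a global classical solution. Since $v_\varepsilon>0$, the map $t\mapsto\|v_\varepsilon(\cdot,t)\|_{L^\infty(\Omega)}$ is positive and continuous. Lemma \ref{S6L1} with $t_0=0$ gives
\[
L_\varepsilon\le \frac{\int_\Omega v_0}{\lambda\int_\Omega u_0}=:c_1,
\]
so $L_\varepsilon<\infty$. Hence $\phi_\varepsilon$ is a $C^1$ strictly increasing bijection from $[0,\infty)$ onto $[0,1)$, with $\phi_\varepsilon'(t)=\|v_\varepsilon(\cdot,t)\|_{L^\infty(\Omega)}/L_\varepsilon$. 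Writing $t=\phi_\varepsilon^{-1}(\tau)$, the chain rule gives
\[
w_{\varepsilon\tau}=\frac{u_{\varepsilon t}}{\phi_\varepsilon'(t)}=\frac{L_\varepsilon}{\|v_\varepsilon(\cdot,t)\|_{L^\infty(\Omega)}}\,u_{\varepsilon t}.
\]
Substituting the first equation of \eqref{2} and pulling the $x$-independent factor inside the divergences yields \eqref{S6L2-2}. The boundary condition and the initial datum transfer directly from \eqref{2}.

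\textbf{Step 2: the coefficient bounds \eqref{S6L2-3}.} The upper bound $a_\varepsilon\le L_\varepsilon\le c_1$ is trivial, since $v_\varepsilon\le\|v_\varepsilon\|_{L^\infty}$. For the lower bound, Lemma \ref{Hanack} gives $a_\varepsilon\ge\lambda L_\varepsilon$, so I need $L_\varepsilon$ bounded below uniformly in $\varepsilon$. I will obtain this from \eqref{local-5}-type mass considerations: integrating the second equation gives
\[
\int_\Omega v_0-\int_\Omega v_\varepsilon(\cdot,t)=\int_0^t\!\int_\Omega u_\varepsilon v_\varepsilon\le \|u_\varepsilon\|_{L^\infty}|\Omega|\int_0^t\|v_\varepsilon\|_{L^\infty}.
\]
It remains to show that $\int_\Omega v_\varepsilon(\cdot,t)\to0$ as $t\to\infty$. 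I expect this from the uniform-in-time $L^1_tL^\infty_x$ control of Lemma \ref{S6L1} together with \eqref{local-4}: the quantity $\|v_\varepsilon(\cdot,t)\|_{L^\infty}$ is nonincreasing and integrable on $(0,\infty)$, so it tends to $0$. Then $L_\varepsilon\ge \int_\Omega v_0/(c_2|\Omega|)$, where $c_2$ is the uniform $L^\infty$ bound from Lemma \ref{Linf}. For $b_\varepsilon$, Lemma \ref{Unif-v} gives $|b_\varepsilon|\le L_\varepsilon\|\nabla v_\varepsilon\|_{L^\infty}\le c_1c_3$.

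\textbf{Step 3: \eqref{S6L2-4} and \eqref{S6L2-5}; this is the main obstacle.} By \eqref{conv-2}, $\|v_\varepsilon(\cdot,t)\|_{L^\infty}\to\|v(\cdot,t)\|_{L^\infty}$ locally uniformly in $t$. Fatou's lemma then gives $L\le\liminf_{\varepsilon}L_\varepsilon\le c_1$, which is \eqref{S6L2-4}. For convergence I need tails that are small uniformly in $\varepsilon$. Lemma \ref{S6L1} bounds $\int_{T}^\infty\|v_\varepsilon\|_{L^\infty}$ by $\int_\Omega v_\varepsilon(\cdot,T)/(\lambda\int_\Omega u_0)$, and $\int_\Omega v_\varepsilon(\cdot,T)\le|\Omega|\,\|v_\varepsilon(\cdot,T)\|_{L^\infty}$. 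By monotonicity \eqref{local-4},
\[
\|v_\varepsilon(\cdot,T)\|_{L^\infty}\le\frac1T\int_0^T\|v_\varepsilon\|_{L^\infty}\le \frac{c_1}{T}.
\]
This makes the tail $\le c_1|\Omega|/(\lambda T\int_\Omega u_0)$, uniformly in $\varepsilon$. The same estimate holds for $v$ after passing to the limit. Splitting
\[
|L_\varepsilon-L|\le\int_0^T\big|\|v_\varepsilon\|_{L^\infty}-\|v\|_{L^\infty}\big|+\text{tails},
\]
I first choose $T$ large to make the tails small, then let $\varepsilon=\varepsilon_j\to0$ and use the local uniform convergence on $[0,T]$. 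This gives \eqref{S6L2-5}.
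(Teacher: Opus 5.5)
Your proof is correct. Its skeleton coincides with the paper's: the chain rule, $a_\varepsilon\le L_\varepsilon\le c_1$ and $|b_\varepsilon|\le L_\varepsilon\|\nabla v_\varepsilon\|_{L^\infty(\Omega)}$, the Harnack inequality for $a_\varepsilon\ge\lambda L_\varepsilon$, Fatou for $L$, and a split of $|L_\varepsilon-L|$ into $[0,T]$ plus tails. Two sub-steps, however, go differently.

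\textbf{Lower bound on $L_\varepsilon$.} The paper reads it off the comparison estimate \eqref{lowv.1}: $\|v_\varepsilon(\cdot,t)\|_{L^\infty(\Omega)}\ge e^{-ct}\inf_\Omega v_0$, hence $L_\varepsilon\ge \inf_\Omega v_0/c$. Your mass balance $\int_\Omega v_0=\int_0^\infty\int_\Omega u_\varepsilon v_\varepsilon$ needs the extra fact $\int_\Omega v_\varepsilon(\cdot,t)\to 0$, but it is more robust. If you estimate $\int_\Omega u_\varepsilon v_\varepsilon\le\|v_\varepsilon(\cdot,t)\|_{L^\infty(\Omega)}\int_\Omega u_\varepsilon$ using \eqref{local-3} instead of Lemma \ref{Linf}, you get a bound in terms of $\int_\Omega v_0$ and the mass alone.

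\textbf{Tail control.} The paper applies Lemma \ref{Lunif} to $t\mapsto\|v(\cdot,t)\|_{L^1(\Omega)}$. Its uniform continuity on $[0,\infty)$ needs a small argument beyond the local convergence \eqref{conv-2}, e.g.\ the uniform Lipschitz bound $\bigl|\frac{d}{dt}\int_\Omega v_\varepsilon\bigr|\le C$. The paper then picks a single $t_0$ and transfers smallness to $v_\varepsilon(\cdot,t_0)$ only for small $\varepsilon$. You instead combine the monotonicity \eqref{local-4} with $L_\varepsilon\le c_1$ to get $\|v_\varepsilon(\cdot,T)\|_{L^\infty(\Omega)}\le c_1/T$, and then apply Lemma \ref{S6L1} at $t_0=T$. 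This gives an explicit $O(1/T)$ tail bound, uniform in every $\varepsilon$, and bypasses Lemma \ref{Lunif} altogether.

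One small remark: the substitution in your Step 1 actually produces $-\chi\nabla\cdot(b_\varepsilon w_\varepsilon^\alpha)$. The displayed \eqref{S6L2-2} has lost the factor $\chi$, which should be restored or absorbed into $b_\varepsilon$; this has no effect on the bounds.
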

\begin{proof}
By Lemma \ref{S6L1}, we find that
\begin{align} \label{S6L2.1}
    L_\varepsilon = \int_0^\infty \left \| v_\varepsilon(\cdot,t) \right \|_{L^\infty(\Omega)}\, dt \leq c_1:= \frac{\int_\Omega v_0}{\lambda \int_\Omega u_0} \qquad \text{for all } \varepsilon \in  (\varepsilon_j)_{j \in \mathbb{N}} ,
\end{align}
which together with \eqref{conv-2} and Fatou's lemma implies \eqref{S6L2-4} since 
\begin{align*}
    L = \int_0^\infty \left \| v(\cdot,t) \right \|_{L^\infty(\Omega)}\, dt \leq \liminf_{\varepsilon \to 0} \int_0^\infty \left \| v_\varepsilon(\cdot,t) \right \|_{L^\infty(\Omega)}\, dt \leq  c_1 .
\end{align*}
Using \eqref{S6L2.1} and Lemma \ref{Unif-v}, we obtain 
\begin{align}\label{S6L2.2}
    a_\varepsilon(x, \tau ) \leq L_\varepsilon \leq c_1 \qquad \text{for all } (x, \tau) \in \Omega \times(0,1)\quad \text{and }\varepsilon \in  (\varepsilon_j)_{j \in \mathbb{N}} ,
\end{align}
and 
\begin{align}\label{S6L2.3}
    |b_\varepsilon(x, \tau)| \leq L_\varepsilon \left \| \nabla v_\varepsilon (\cdot,t) \right \|_{L^\infty(\Omega)} \leq c_2 \qquad \text{for all } \varepsilon \in  (\varepsilon_j)_{j \in \mathbb{N}} ,
\end{align}
where $c_2>0$. Recalling the estimate \eqref{lowv.1}, we can find $c_3>0$ and $c_4>0$ such that 
\begin{align*}
    v_\varepsilon(x,t) \geq c_3e^{-c_4t} \qquad \text{for all }x\in \Omega, t>0\, \text{ and } \varepsilon \in  (\varepsilon_j)_{j \in \mathbb{N}},
\end{align*}
which combines with Lemma \ref{Hanack} entails that 
\begin{align}\label{S6L2.4}
    a_\varepsilon(x,\tau ) &\geq \lambda \int_0^\infty \left \| v_\varepsilon(\cdot,t) \right \|_{L^\infty(\Omega)}\, dt \notag \\
    &\quad\geq \lambda c_3 \int_0^\infty e^{-c_4t}\, dt= \frac{\lambda c_3}{c_4} \qquad \text{for all } (x, \tau) \in \Omega \times(0,1)\quad \text{and }\varepsilon \in  (\varepsilon_j)_{j \in \mathbb{N}}.
\end{align}
Collecting \eqref{S6L2.2}, \eqref{S6L2.3} and \eqref{S6L2.4}, we obtain \eqref{S6L2-3}. From \eqref{conv-2}, we have that $\left \|v(\cdot, t) \right \|_{L^1(\Omega)}$ is uniformly continuous. Moreover, because
\begin{align*}
    \int_0^\infty \left \| v(\cdot,t) \right \|_{L^1(\Omega)} &\leq |\Omega|   \int_0^\infty \left \| v(\cdot,t) \right \|_{L^\infty(\Omega)} =L|\Omega|, 
\end{align*}
we apply Lemma \ref{Lunif} to deduce that for any $\eta>0$, there exists $t_0>0$ such that 
\begin{align*}
    \left \| v(\cdot,t_0) \right \|_{L^1(\Omega)} \leq \frac{\eta \lambda \int_\Omega u_0}{6},
\end{align*}
which together with \eqref{conv-2} implies that there exists $\varepsilon_*\in (0,1)$ satisfying 
\begin{align*}
    \left \| v_\varepsilon(\cdot,t_0) \right \|_{L^1(\Omega)} \leq \frac{\eta \lambda \int_\Omega u_0}{3} \qquad\text{for all } \varepsilon \in (\varepsilon_j)_{j \in \mathbb{N}} \quad \text{with } \varepsilon< \varepsilon_*.
\end{align*}
This, in conjunction with Lemma \ref{S6L1} entails that 
\begin{align} \label{S6L2.5}
    \int_{t_0} ^\infty  \left \| v_\varepsilon(\cdot,t) \right \|_{L^\infty(\Omega)} \, dt \leq \frac{\int_\Omega v_\varepsilon(\cdot,t_0)}{\lambda \int_\Omega u_0} \leq \frac{\eta}{3} \qquad\text{for all } \varepsilon \in (\varepsilon_j)_{j \in \mathbb{N}} \quad \text{with } \varepsilon< \varepsilon_*.
\end{align}
Applying \eqref{conv-2}, \eqref{S6L2.5} and Fatou's lemma, we deduce that 
\begin{align} \label{S6L2.6}
     \int_{t_0} ^\infty  \left \| v(\cdot,t) \right \|_{L^\infty(\Omega)} \, dt \leq  \frac{\eta}{3}.
\end{align}
and moreover, we can choose $\varepsilon_{**}\in (0, \varepsilon_*)$ fulfilling
\begin{align*}
    \left \| v_\varepsilon(\cdot,t)-v(\cdot,t) \right \|_{L^\infty(\Omega)} \leq \frac{\eta}{3t_0} \qquad\text{for all } \varepsilon \in (\varepsilon_j)_{j \in \mathbb{N}} \quad \text{with } \varepsilon< \varepsilon_{**}.
\end{align*}
This, together with \eqref{S6L2.5} and \eqref{S6L2.6} implies that
\begin{align*}
    |L_\varepsilon -L|&= \left |  \int_0 ^\infty \left \| v_\varepsilon (\cdot,t)\right \|_{L^\infty(\Omega)}\, dt- \int_0 ^\infty \left \| v (\cdot,t)\right \|_{L^\infty(\Omega)}\, dt \right | \\
    &\leq \int_0^{t_0} \left \| v_\varepsilon (\cdot,t)- v(\cdot,t)\right \|_{L^\infty(\Omega)}\, dt + \int_{t_0} ^\infty \left \| v_\varepsilon (\cdot,t)\right \|_{L^\infty(\Omega)}\, dt+ \int_{t_0} ^\infty \left \| v (\cdot,t)\right \|_{L^\infty(\Omega)}\, dt \\
    &\leq \eta \qquad\text{for all } \varepsilon \in (\varepsilon_j)_{j \in \mathbb{N}} \quad \text{with } \varepsilon< \varepsilon_{**},
\end{align*}
which proves \eqref{S6L2-5}. The proof is now complete.
\end{proof}
We are now in position to prove our second main result.
\begin{proof}[Proof of Theorem \ref{thm2}]
    Thanks to \eqref{conv-2} and \eqref{local-4}, we find that $ \left \| v(\cdot,t) \right \|_{L^\infty(\Omega)} $ is monotone decreasing with respect to $t>0$. This, in conjunction with \eqref{S6L2-4} implies that
    \begin{align} \label{thm2.2}
        \left \| v(\cdot,t) \right \|_{L^\infty(\Omega)}  \to 0 \qquad \text{as }t \to \infty.
    \end{align}
Using \eqref{conv-2} and \eqref{S6L2-5}, we deduce that 
    \begin{align*}
        \phi_\varepsilon \to \phi \qquad \text{uniformly on }[0,T], \quad \text{for all }T>0 \quad \text{as } \varepsilon = \varepsilon_j \to 0,
    \end{align*}
    where $(\varepsilon_j)_{j \in \mathbb{N}}$ is given in Lemma \ref{conv}. Hence, from \eqref{conv-1} and \eqref{conv-2}, we have
    \begin{align} \label{thm2.1}
        w_\varepsilon(x, \tau) \to u(x, \phi^{-1}(\tau)), \quad a_\varepsilon(x,\tau) \to a(x,\tau) \quad \text{and } b_\varepsilon(x,\tau) \to b(x, \tau)  
    \end{align}
     for all $(x, \tau ) \in \Omega \times (0,1)$ as $\varepsilon = \varepsilon_j \searrow 0$. Now, we make use of \eqref{S6L2-3} and employ Theorem 1.3 in \cite{PV} to deduce that there exists $\theta \in (0,1)$ and $c_1>0$ satisfying
     \begin{align*}
          \left \| w_\varepsilon \right \|_{C^{\theta,\frac{\theta}{2}}\left ( \bar{\Omega}\times [0,1] \right )} \leq c_1 \qquad \text{for all } \varepsilon \in (0,1).
     \end{align*}
     This enables us to apply the Arzel\`a--Ascoli theorem  to obtain that 
     \begin{align*}
         w_\varepsilon(x, \tau) \to w(x, \tau ) \quad \text{in } C^0 \left ( \Bar{\Omega}\times [0,1] \right ) \, \text{as } \varepsilon = \varepsilon_j \searrow 0,
     \end{align*}
     which together with \eqref{conv-2} entails that 
     \begin{align*}
         w(x, \tau )= u(x, \phi^{-1}(\tau ))\qquad \text{for all }(x, \tau) \in \Omega \times (0,1).
     \end{align*}
     Because $w(\cdot, 1)$ is continuous in $\bar{\Omega}$, we infer that 
     \begin{align*}
         u(\cdot,t) \to u_\infty:= w(\cdot,1) \quad \text{in } \Omega \quad \text{as } t \to \infty,
     \end{align*}
     which combines with \eqref{thm2.2} implies \eqref{thm2-1}. Since $u^2 \in L^1_{loc} \left ([0, \infty); W^{1,1}(\Omega) \right ) $, we claim that $w^2 \in L^1_{loc}\left ([0, 1); W^{1,1}(\Omega) \right )$.  Moreover, \eqref{thm2-2} is a consequence of \eqref{Def.3} and \eqref{thm2-3} is a result of \eqref{S6L2-3} and \eqref{thm2.1}. The proof is now complete. 
     
\end{proof}

\section*{Declarations}
\paragraph{Conflict of Interest} The authors declare that they have no conflict of interest.
\paragraph{Declaration of Generative AI and AI-Assisted Technologies in the Manuscript Preparation Process}

During the preparation of this work, the author(s) used \textbf{DeepSeek} (\texttt{deepseek.com}) for language polishing, grammar correction, and overall readability improvement. The author(s) reviewed and edited the output as needed and take full responsibility for the content of the published article.
\paragraph{Acknowledgments} Minh Le was supported by the Hangzhou Postdoctoral Research Grant.

     \paragraph{Data Availability}
 Data sharing not applicable to this article as no datasets were generated or analyzed during
the current study.

\end{document}